\DeclareSymbolFontAlphabet{\mathbb}{AMSb}
\DeclareSymbolFontAlphabet{\mathbbl}{bbold}
\DeclareMathAlphabet{\mathpzc}{OT1}{pzc}{m}{it}
\DeclareSymbolFontAlphabet{\amsmathbb}{AMSb}%
 \newcommand{\scal}[1]{\left\langle #1 \right\rangle}
 \newcommand{\sett}[1]{\left\{   #1   \right\}}
 \newcommand{\norm}[1]{\left\|   #1   \right\|}
 \newcommand{\abso}[1]{\left|   #1   \right|}
 \newcommand{\paar}[1]{\left( #1 \right)}
\definecolor{blue}{rgb}{0.0, 0.0, 1.0}
\newcommand{\vertiii}[1]{{\left\vert\kern-0.25ex\left\vert\kern-0.25ex\left\vert #1
		\right\vert\kern-0.25ex\right\vert\kern-0.25ex\right\vert}}
\numberwithin{equation}{section}
\newtheorem{theorem}{\quad Theorem}[section]
\newtheorem{lemma}[theorem]{\quad Lemma}
\newtheorem{remark}[theorem]{\quad Remark}
\newtheorem{definition}[theorem]{\quad Definition}
\newcommand{\ngg}{\mathscr{G}}
\newcommand{\nx}{\mathscr{\chi}}
\newcommand{\lt}{{L^2(\mathbb{R})}}
\newcommand{\dd}{\;{\rm d}}
\newcommand{\ff}{\varphi}
\newcommand{\ee}{{\rm e}}
\newcommand{\ii}{{\rm i}}
\newcommand{\e}{\varepsilon}
\newcommand{\lam}{\lambda}
\newcommand{\rr}{\mathbb{R}}
\title{Numerical study of a nonlocal nonlinear Schrödinger equation \\ (MMT model) }
\author[1]{Amin Esfahani}
\author[2]{Gulcin M. Muslu}
\affil[1]{Department of Mathematics, Nazarbayev University, Astana 010000, Kazakhstan}
\affil[2]{Istanbul Technical University, Department of Mathematics, Maslak 34469, Istanbul, Turkey}
\date{}
\begin{document}

	\maketitle
 \begin{center}
    \textit{Dedicated to the memory of Professor S. Erdogan Suhubi}
\end{center}
 \let\thefootnote\relax\footnotetext{Amin Esfahani (saesfahani@gmail.com, amin.esfahani@nu.edu.kz)}
		\let\thefootnote\relax\footnotetext{Gulcin M. Muslu (gulcin@itu.edu.tr) Corresponding Author}
	\begin{abstract}
		
	In this paper, we study a nonlocal nonlinear Schrödinger equation (MMT model). We investigate the effect of the nonlocal operator appearing in the nonlinearity on the long-term behavior of solutions, and we identify the conditions under which the solutions of the Cauchy problem associated with this equation is bounded globally in time in the energy space. We also explore the dynamical behavior of standing wave solutions. Therefore, we  first numerically generate standing wave solutions of nonlocal nonlinear Schrödinger equation by using the Petviashvili's iteration method and their stability is investigated by the split-step Fourier method.  This equation also has a two-parameter family of standing wave solutions. In a second step, we  meticulously concern with the construction and stability of  a two-parameter family of standing wave solutions numerically. Finally, we investigate the semi-classical limit of the nonlocal nonlinear Schrödinger equation  in both focusing and defocusing cases.

\vspace{4mm}

		\textbf{Keywords}:   Nonlocal nonlinear Schrödinger equation, Petviashvili iteration method, Standing wave, Stability, Blow-up, Split-step Fourier method, Boosted standing wave, Semi-classical limit
		
		\textbf{MSC 2020}: 35Q53, 35C08, 37K40, 37K45
	\end{abstract}

\section{Introduction}
 Weak turbulence theories have been used to estimate wave number spectra of random waves in a variety of complex physical problems, ranging from surface gravity waves in fluids to ion-acoustic waves in plasmas to optical turbulence, among many applications. Due to weak turbulence, the theory focuses on resonant interactions between small-amplitude waves. {On the other hand, the inclusion of fractional derivatives in nonlinear dispersive wave equations allows for the modeling of nonlocal effects and anomalous dispersion, which are not captured by classical integer-order differential equations. In particular, fractional Laplacian-type operators naturally arise in the study of wave propagation in media with power-law dispersion relations, Lévy flights, and turbulence phenomena.}
In this regard, Majda, McLaughlin, and Tabak (MMT) \cite{cmmt,mmt} have introduced the two-parameter family of one-dimensional nonlinear dispersive wave equations
\begin{equation}\label{original}
\ii\psi_t= \tilde{\lambda} D^{\alpha} \psi+ \zeta D^{\gamma /4}
\left( \mid D^{\gamma /4} \psi \mid^2 D^{\gamma /4} \psi \right),
\end{equation}
where $\alpha > 0$ and $ \tilde{\lambda}, \gamma, \zeta$ are real parameters and the operator $D^\gamma=D^\gamma_x$ is defined via the Fourier symbol $|\xi|^\gamma$, i.e.
\[
\paar{D^\gamma f}^\wedge(\xi)=|\xi|^\gamma\hat{f}(\xi),\qquad f\in \mathcal{S},\,\xi\in\rr,
\]
with $\xi\neq0$ when $\gamma<0$.
Here, $\wedge$ is the Fourier transform  and $\mathcal{S}=\mathcal{S}(\rr)$ is the Schwartz class.
This model  assesses the validity of weak turbulence theory for random waves. {The parameter $\alpha$ controls the dispersion relation $\omega(k)=|k|^\alpha$ and $\gamma$ the nonlinearity (see \cite{pns,saut-wang}). In equation \eqref{original}, the fractional dispersion term $D^\alpha \psi$ generalizes the classical Schr\"odinger equation by permitting a wider range of dispersive behaviors, while the nonlinearity involving $D^{\gamma/4}$ allows for a nonlocal interaction structure, which can have significant implications for wave turbulence and the existence of solitary waves.
The MMT model is indeed a
classical example of a generic four-wave Hamiltonian system which contains only the scattering of two waves into two other waves.  The model can be configured to exhibit the linear dispersion relationship of surface gravity waves on a deep, ideal fluid. Interestingly, the MMT model enables direct numerical simulation of the fundamental equations of motion. We refer  also to \cite{pns,saut-wang,ref1,ref2,ref3,ref4,ref5} and references therein  to see the derivation, the importance and other aspects of \eqref{original}.}
The case $\gamma=0$ was studied
by Zakharov and collaborators \cite{Zakharov, zgpd}  since there is no “nonlinear frequency shift” occurring in the Hamiltonian system and
the signs of the energy and mass fluxes for the Kolmogorov spectrum suggest  that the wave turbulence predictions should be most easily realized for this model. Equation \eqref{original} in this case turns into the well-known fractional Schr\"odinger equation (\cite{laskin,laskin1}):
\begin{equation}
\ii\psi_t= \tilde{\lambda} D^{\alpha} \psi+ \zeta
  | \psi|^2     \psi.
\end{equation}
We note that  the sign of $\gamma$ in \eqref{original} is the opposite of \cite{cmmt,mmt}, but agrees
with \cite{germain, Zakharov,zgpd}.
{There is a huge number of results on the various aspects of the above nonlocal wave equation. See for example \cite {elm,germain,pns,ref5,ref3} and references therein.}

{Setting $u= D^{\gamma/4 }\psi$, under certain regularity
assumptions. Then, equation \eqref{original} can be written as
\begin{equation}\label{original2}
\ii  D^{-\gamma/4}u_t= \tilde{\lambda} D^{\alpha-\gamma /4} u+ \zeta
D^{\gamma /4}\left(|u|^2 u \right).
\end{equation}
By applying the operator $D^{\gamma/4}$ to \eqref{original2},
  we obtain the transformed equation
\begin{equation}\label{transformed}
\ii u_t=\tilde{\lambda} D^{\alpha}u+
\zeta D^{\gamma/2} \left( |u|^2u\right).
\end{equation}}
% \blue{In equation \eqref{original2}, the nonlocal nonlinear Schr\"odinger equation incorporates a fractional nonlinear term via $D^\beta (|u|^{2\sigma} u)$, which enables a broader class of nonlocal interactions beyond the usual cubic or power-type nonlinearities. This form arises naturally in the study of generalized Gross-Pitaevskii equations and other models in nonlinear optics and plasma physics (see \eqref{NLS-0} below).}
{Hence, if $\psi\in {H}^1\cap \dot{H}^{\frac\gamma4,4}$ satisfies \eqref{original} then $u\in \dot{H}^{1-\frac\gamma4}\cap\dot{H}^{-\frac\gamma4}\cap L^4$ satisfies \eqref{original2}; and the converse implication also holds.} In this study, we focus on a numerical investigation of {a particular case of \eqref{transformed}, where (noticing  $D^2=-\partial_x^2$), $\beta=\gamma/2$, and $\tilde{\lambda}=-\lambda$} together with the  power-law  nonlinearity:
\begin{equation}\label{nonlocalNLS}
    \begin{cases}
        \ii u_t-\lambda u_{xx}=\zeta D^{\beta} (| u |^{2\sigma} u),
    \qquad (x,t)\in\rr{\times\rr^+},\\
    u(x,0)=u_0(x),
    \end{cases}
\end{equation}
where {$\zeta$ measures the nonlinearity strength and is often normalized to $=\pm 1$. The parameters $\sigma>0$ and $\lambda$ are nonzero real numbers. The constant $\zeta$ essentially   controls how the wave function interacts with itself due to nonlinearity,  and the behavior of the solutions strongly depends on its value/sign.}
If we normalize $\lambda$ to be $+1$, equation \eqref{nonlocalNLS} is referred to as {\it focusing} when $\zeta=+1$ and as  {\it defocusing}  when {$\zeta=-1$}.  The dependence of this model on the parameter $\beta$, makes it possible to explore different regimes of wave turbulence.  Equation \eqref{original} is a Hamiltonian system, with the Hamiltonian given by:
\begin{equation}
H(u)=\frac12\int_\mathbb{R} \left(\lambda|D^{\frac\alpha 2}u|^2-\frac{\zeta}{2}|D^{\frac{\gamma}{4}}u|^{4}\right)\dd x.
\end{equation}
As noted in \cite{zgpd}, equation \eqref{original} can be conveniently expressed in Fourier space as:
\[
\ii\frac{\hat{u}_k}{\partial t}
=\lambda |k|^{\alpha}
\hat{u}_k+\zeta\iiint T_{123k}\hat{u}_1\hat{u}_2\overline{\hat{u}}_3\delta(k_1+k_2-k_3-k)\dd k_1\dd k_2\dd k_3
\]
 where $\hat{u}_k=\hat{u}(k,t)$ denotes the $k$–th Fourier coefficient of $u$. In this formulation, \eqref{original} resembles the one-dimensional Zakharov equation, with the interaction coefficient given by:
 $$ T_{123k}=T(k_1,k_2,k_3,k)=\zeta\abso{k_1k_2k_3k}^{\beta /4}.
 $$

 From a mathematical standpoint, equation \eqref{nonlocalNLS} bears resemblance to the generalized Schrödinger equation
\[
\begin{cases}
    \ii u_t - \lambda u_{xx} = \scal{D}^\beta f(u,\bar{u}),
\\
u(x,0)=u_0(x),
\end{cases}
\]
where $\scal{\cdot}=1+|\cdot|$. This model has undergone extensive investigation, particularly in the case where $\beta = 1$ and $f(u,\bar u)=u^2$  (see \cite{ohst}). It is important to note that the corresponding equation is ill-posed, as evidenced by the phenomenon of norm inflation observed in the mapping $u_0 \mapsto u$, as discussed by Christ \cite{christ} and further elaborated upon in related literature \cite{related}.
In the work conducted by Stefanov \cite{stefanov}, the existence of weak solutions in $H^1$ was established, subject to an additional condition of smallness, specifically when $\sup_x \left| \int_{-\infty}^x u_0(y) \, dy \right| \ll 1$.
Furthermore, recent advances in local well-posedness have been achieved by Bejenaru \cite{bejenaru1, bejenaru2}, and Bejenaru and Tataru \cite{bejenaru-tataru}, demonstrating the existence of solutions, even for data that may not necessarily exhibit smallness, within weighted Sobolev spaces.

 To gain further insight, we first observe that the following quantities {(the total energy (or Hamiltonian), the mass of the wave function, and momentum, receptively)} are formally conserved   by the time evolution of \eqref{nonlocalNLS}:
\begin{eqnarray}
E(u)&=&\frac12\int_\rr\left(\lambda \abso{D^{-\frac\beta2}u_x}^2-\frac\zeta{\sigma+1}|u|^{2\sigma+2}\right) \dd x,\label{energy}  \\
F(u)&=&\int_\rr\abso{D^{-\frac\beta2}u}^2\dd x,\label{mass}\\
P(u)&=&\Im \scal{D^{-\frac\beta2}u,D^{-\frac\beta2}u_x}_{L^2}.\label{mass}
\end{eqnarray}
Inheriting from the above invariants, we can  define (see \cite{amin}) the space $\nx$ via the norm
$$
\|u\|_\nx^2=F(u)+\norm{D^{1-\beta/2}u}_\lt^2.
$$
Besides the conservation laws mentioned above,   equation \eqref{nonlocalNLS} is also invariant under the following scaling transformation:
\[
u(x,t)\mapsto u_\tau(x,t)=\tau^{\frac{2-\beta}{2\sigma}}u(\tau x,\tau^2 t)
\]
for any $\tau>0$. In other words, if \(u\) solves \eqref{nonlocalNLS}, then so does $u_\tau$. Consequently, under this scaling transformation, the homogeneous $\dot{H}^s (\rr)$ Sobolev norm of $u_\tau$ behaves as follows:
\[
\|u_\tau\|_{\dot{H}^s} \equiv \|D^s u_\tau\|_{L^2} = \tau^{-s+\frac{\sigma-2+\beta}{2\sigma}}\|u\|_{\dot{H}^s}.
\]
The equation is referred to as $\dot{H}^s$ critical whenever this scaling leaves the $\dot{H}^s$ norm invariant, that is whenever
\[
s=s_c=\frac{\sigma-2+\beta}{2\sigma}.
\]

When $\beta=0$,  \eqref{nonlocalNLS} reduces to the classical Schrödinger equation (NLS),
  \begin{equation}\label{NLS-0}
 \begin{cases}
        \ii u_t-\lambda u_{xx}=\zeta    | u |^{2\sigma} u ,\\
        u(x,0)=u_0(x),
 \end{cases}
    \end{equation}
    a canonical model for weakly nonlinear wave propagation in dispersive media. In this case the energy space $\nx$ turns into $H^1(\rr)$, and the mass conservation is $F(u)=\|u\|_{L^2}^2$. For $s_c = 0$, the corresponding mass critical case is found for $\sigma=2$.  Numerous results pertaining to the Cauchy problem associated with \eqref{NLS-0} exist (see, for example, \cite{linaresponce}). It is well known (see for example \cite{cazenave}) that in the mass subcritical case $\sigma < 2$, the classical NLS is globally well-posed, regardless of the sign of $\zeta$. On the other hand, finite time blow-up of solutions in the $\dot{H}^1 (\rr)$ can occur in the focusing case   as soon as $\sigma\geq2$. Moreover, it is known that for mass critical NLS, the threshold for finite time blow-up is determined by the mass of the corresponding ground state, that is the unique positive   solution $Q_0$ of the nonlinear elliptic equation
    \[
    \ff-\ff''=\ff^{2\sigma+1}.
    \]
    In other words, if $\sigma=2$ and $F(u_0)<F(Q_0)$, global existence still holds, whereas blow-up occurs as soon as $F(u_0) \geq F(Q_0)$.

 Regarding the local Cauchy problem, \eqref{nonlocalNLS} is trivially locally well-posed in $H^s$ with $s > 1/2 + \beta$, without the use of dispersive estimates, by employing the fractional pointwise inequality \cite{corma}. However, much better results are anticipated when leveraging the dispersive properties of the NLS group, as demonstrated in \cite{amin}. Specifically, in the case $\sigma = 1$ with $\beta < 1$, in \cite{amin} it was  established the local well-posedness of \eqref{nonlocalNLS} in $H^s(\mathbb{R})$ with $s > \beta/2$. Additionally, the flow map associated with the initial value problem   \eqref{nonlocalNLS} fails to be locally uniformly continuous when  $0 < \beta < 2/3$  and
 \[
\frac{3\beta - 2}{2(3 - 3\beta)}   < s <  \frac\beta2.
 \]

 This paper conducts numerical simulations to explore how the non-local operator $D^\beta$   affects various mathematical properties of the solutions \eqref{nonlocalNLS}. Roughly speaking, we focus on the interaction between dispersion and (non-local) nonlinearity in the time evolution of \eqref{nonlocalNLS}.  The split-step Fourier method is a quite efficient and popular numerical method for the well-known NLS equation \cite{weideman,fornberg1,pathria2,besse,muslu}. This method have the advantages
that it provides accurate solutions and it is unconditionally stable. For this aim, we propose the split-step Fourier method for the time evolution of the nonlocal NLS equation. Since no explicit solutions for standing wave solutions of equation \eqref{nonlocalNLS} are, except in the classical
case $\beta=0$, unknown, then some numerical method for the generation of approximate profiles is required. Therefore, we apply Petviashvili's iteration method to generate the standing wave solutions for $\beta \neq 0$.
Then, we study the dynamics of standing waves of \eqref{nonlocalNLS} by using the
split-step Fourier method. The nonlocal NLS equation also has a two-parameter family of standing wave solutions entitled boosted standing waves. We generate the boosted standing waves numerically by using Petviashvili's iteration method.
Although there has been lots of  numerical studies on one parameter
standing wave solutions for NLS  type equations, to the best our knowledge,
the numerical generation of a two-parameter family solutions is studied first in literature.  The stability of  boosted standing waves of \eqref{nonlocalNLS}  is meticulously studied numerically by checking the convexity of the Lyapanuv function and the long behavior of the  boosted standing wave solution
under small perturbations. We  also present some careful numerical
simulations of the semi-classical scaling of the focusing and defocusing nonlocal NLS equation. {Intuitively, we anticipate the model to exhibit better behavior for smaller values of $\beta$. Indeed, we will see that the critical index of stability is $\frac{2-
\beta}{1+\beta}$ where the ground states are stable if $\sigma<\frac{2-
\beta}{1+\beta}$ while they are unstable if $\sigma>\frac{2-
\beta}{1+\beta}.$ We also notice that the same index serves to obtain a dichotomy between the global solutions or the blow-up solutions. Compared with the classical NLS equation ($\beta=0)$, we observe that the instability of ground states or blow-up range  of $\sigma$ for \eqref{nonlocalNLS} is a decreasing function of $\beta$. See Theorems \ref{uniform} and \ref{stab-thm}.} Our analysis include investigating the specific nature of finite-time blow-up (such as whether it is self-similar), analyzing qualitative aspects of the associated ground state solutions (including their stability), and assessing the potential for well-posedness in the energy supercritical regime.

This paper is organized as follows. In Section \ref{Stability and Long time behavior}, we initially present some results about the existence of ground states of \eqref{nonlocalNLS}, and we generate  standing wave solutions numerically. Next, we derive the conditions under which the solutions remain in the energy space globally in time. Additionally, based on stability theory, we numerically investigate the stability of the standing wave solutions. In Section \ref{Boosted standing waves}, we demonstrate that the boosted standing waves exist under certain conditions, and then analyze the stability of these solutions for different wave speeds. Finally, in Section \ref{semi-classical limit}, we consider the semi-classical limit of \eqref{nonlocalNLS} in both focusing and defocusing cases.

\section{Stability and long time behavior}\label{Stability and Long time behavior}
In this section, we present mathematical findings relevant to the Cauchy problem linked with \eqref{nonlocalNLS}, serving as a foundation for our ensuing numerical simulations.

First, we report that the energy space $\nx$ is embedded into the Lebesgue space $L^q(\rr)$ under certain conditions. This connection will be crucial in the study of standing waves and global solutions of \eqref{nonlocalNLS}. {The proof of the following lemma is given in \cite{amin} for the case $-1/2<\beta<3/2$. However, the argument used there is also valid under the following assumptions.}

\begin{lemma} \label{embed}
	Let $-1<\beta<2$ and $\max\{0,\frac{-\beta}{1+\beta} \}\leq q\leq \frac{q^\ast}{2}-1$, where
	\[
	q^\ast=\begin{cases}
		\frac{2}{ \beta-1},&  \beta>1,\\
		\infty^-,&  \beta\leq1,
	\end{cases}\]
	where $\infty^-$ is any number $q_1<\infty$.
	Then there is a constant $C>0$   such that for any $g\in\nx$,
	\begin{equation}\label{gn}
		\|g\|_{L^{2q+2}(\rr)} \leq C
(F(g))^{\frac12- \frac14(\beta+\frac{q}{q+1})}
  \left\|D ^{ 1-\frac\beta2 }g \right\|_{L^2(\rr)}^{ \frac\beta2+\frac{q}{2(q+1)} }.
	\end{equation}
	As a consequence, it follows that $\nx$ is  continuously embedded in $L^{2q+2}(\rr)$.
\end{lemma}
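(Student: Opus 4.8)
\noindent\emph{Proof strategy.}\ The plan is to recognize \eqref{gn} as a fractional Gagliardo--Nirenberg inequality after a single change of unknown, and then to establish the latter by a Littlewood--Paley/Bernstein argument. First I would set $h=D^{-\beta/2}g$, equivalently $g=D^{\beta/2}h$. Directly from the definitions this gives
\begin{equation*}
F(g)=\|h\|_{\lt}^2,\qquad \left\|D^{1-\frac\beta2}g\right\|_{\lt}=\left\|D^{1-\frac\beta2}D^{\frac\beta2}h\right\|_{\lt}=\|h_x\|_{\lt},
\end{equation*}
so that $g\in\nx$ is equivalent to $h\in H^1(\rr)$. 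Writing $p=2q+2$ and
\[
\theta=\frac\beta2+\frac{q}{2(q+1)}=\frac\beta2+\frac12-\frac1p,
\]
a direct check shows that the exponents in \eqref{gn} are exactly $\tfrac12-\tfrac14\!\left(\beta+\tfrac{q}{q+1}\right)=\tfrac{1-\theta}2$ on $F(g)$, and $\theta$ on $\|D^{1-\beta/2}g\|_{\lt}$. Hence \eqref{gn} is equivalent to the fractional Gagliardo--Nirenberg inequality
\begin{equation}\label{fgn-plan}
\left\|D^{\frac\beta2}h\right\|_{L^{p}(\rr)}\leq C\,\|h\|_{\lt}^{1-\theta}\,\|h_x\|_{\lt}^{\theta}.
\end{equation}

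Next I would verify that the hypotheses on $q$ and $\beta$ are precisely those guaranteeing $0\leq\theta\leq1$, which is what makes \eqref{fgn-plan} meaningful. Indeed $\theta\geq0$ reads $\frac{q}{q+1}\geq-\beta$, which (since $1+\beta>0$) is equivalent to $q\geq\frac{-\beta}{1+\beta}$; together with $q\geq0$ this is the stated lower bound. For the upper bound, when $\beta>1$ one computes $q+1=\frac1{\beta-1}$ at the top endpoint, whence $\theta=1$ exactly when $p=\frac2{\beta-1}=q^\ast$, i.e. when $q=\frac{q^\ast}2-1$; for $\beta\leq1$ one has $\theta<\frac{\beta+1}2\leq1$ automatically, consistent with $q^\ast=\infty^-$. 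At the two endpoints \eqref{fgn-plan} degenerates to a pure embedding: $\theta=1$ is the Sobolev embedding $\dot H^{1-\beta/2}(\rr)\hookrightarrow L^{q^\ast}(\rr)$ (valid for $1<\beta<2$), while $\theta=0$ is the Hardy--Littlewood--Sobolev bound $D^{\beta/2}\colon \lt\to L^{p}(\rr)$ with $\frac1p=\frac{1+\beta}2$ (relevant when $\beta<0$). Both are classical.

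For the interior range $0<\theta<1$ I would prove \eqref{fgn-plan} directly. Decompose $h=\sum_{j\in\mathbb{Z}}P_jh$ into Littlewood--Paley pieces with frequency support $|\xi|\sim2^j$. Combining the Bernstein inequality $\|f\|_{L^p}\lesssim 2^{j(\frac12-\frac1p)}\|f\|_{\lt}$ (for $p\geq2$) with $\|D^{\beta/2}P_jh\|_{\lt}\sim 2^{j\beta/2}\|P_jh\|_{\lt}$ yields $\big\|D^{\frac\beta2}P_jh\big\|_{L^p}\lesssim 2^{j\theta}\|P_jh\|_{\lt}$. Estimating $\|P_jh\|_{\lt}\leq\|h\|_{\lt}$ on low frequencies and $\|P_jh\|_{\lt}\lesssim2^{-j}\|h_x\|_{\lt}$ on high frequencies, then splitting the sum at a dyadic threshold $2^{j_0}$ and summing the two geometric series (which converge precisely because $0<\theta<1$) gives
\[
\left\|D^{\frac\beta2}h\right\|_{L^p}\lesssim 2^{j_0\theta}\|h\|_{\lt}+2^{j_0(\theta-1)}\|h_x\|_{\lt};
\]
choosing $2^{j_0}\sim\|h_x\|_{\lt}/\|h\|_{\lt}$ optimizes the right-hand side and produces exactly \eqref{fgn-plan}. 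Finally, the continuous embedding $\nx\hookrightarrow L^{2q+2}(\rr)$ follows at once: since $F(g)\leq\|g\|_\nx^2$ and $\|D^{1-\beta/2}g\|_{\lt}\leq\|g\|_\nx$, the product of the two factors in \eqref{gn} is bounded by $\|g\|_\nx^{(1-\theta)+\theta}=\|g\|_\nx$.

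I expect the main obstacle to be the endpoint bookkeeping rather than the interior estimate: the Littlewood--Paley/Bernstein scheme delivers the case $0<\theta<1$ cleanly, but at $\theta=0$ and $\theta=1$ one of the two geometric series diverges, so these borderline values of $q$ must be handled separately through the sharp Sobolev and Hardy--Littlewood--Sobolev embeddings. One must also check carefully that the stated constraints on $(q,\beta)$---including the two defining regimes $\beta>1$ and $\beta\leq1$ of $q^\ast$---translate exactly into the admissible range $0\leq\theta\leq1$, which is the computational crux of the argument.
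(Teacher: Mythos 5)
Your proof is correct, and it is genuinely more self-contained than what the paper provides: the paper offers no argument at all for Lemma \ref{embed}, merely asserting that the proof in \cite{amin} (given there for $-1/2<\beta<3/2$) extends to $-1<\beta<2$. Your reduction $h=D^{-\beta/2}g$ is the right normalization — it is exactly the change of variables implicit in the definition of $\nx$ — and your exponent bookkeeping checks out: with $\theta=\frac\beta2+\frac12-\frac1p$, $p=2q+2$, the two exponents in \eqref{gn} are indeed $\frac{1-\theta}{2}$ (on $F(g)$, which carries the square) and $\theta$, the constraint $\theta\geq0$ reproduces $q\geq\frac{-\beta}{1+\beta}$ using $1+\beta>0$, and $\theta\leq1$ reproduces $q\leq\frac{q^\ast}{2}-1$ in the regime $\beta>1$. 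The Littlewood--Paley/Bernstein proof of the interior case $0<\theta<1$, with the two endpoints $\theta=1$ (homogeneous Sobolev $\dot{H}^{1-\beta/2}\hookrightarrow L^{q^\ast}$, valid since $1-\frac\beta2\in(0,\frac12)$ for $1<\beta<2$) and $\theta=0$ (Riesz potential/HLS bound, valid since $-\frac\beta2\in(0,\frac12)$ for $-1<\beta<0$) handled separately, is sound. Worth noting: the case-splitting can be avoided entirely by a slightly shorter route that is presumably closer to the argument behind \cite{amin} — combine the Sobolev embedding $\|g\|_{L^p}\lesssim\|D^{s}g\|_{L^2}$ with $s=\frac12-\frac1p\in[0,\frac12)$ and the Plancherel--H\"older interpolation $\|D^{s}g\|_{L^2}\leq\|D^{-\beta/2}g\|_{L^2}^{1-\theta}\|D^{1-\beta/2}g\|_{L^2}^{\theta}$, which holds for all $-\frac\beta2\leq s\leq1-\frac\beta2$, i.e.\ uniformly for $0\leq\theta\leq1$; this treats interior and endpoint values of $q$ in one stroke. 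Either way, your version supplies exactly the details the paper leaves to the citation, and your final deduction of the continuous embedding $\nx\hookrightarrow L^{2q+2}(\rr)$ from the product of exponents summing (in the $\|\cdot\|_\nx$ scale) to one is correct.
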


\subsection*{Standing wave and ground state}
Here, we wonder whether the nonlocal NLS equation \eqref{nonlocalNLS} admits standing wave solutions of the form {$u(x,t)=\ee^{-\ii\omega t}\varphi (x)$,} where $\omega >0$ represents the standing wave frequency and the profile $\varphi$ is a real-valued time-independent function satisfying
\begin{equation}\label{stand-3}
  \omega \varphi-\lambda\varphi^{\prime\prime}=\zeta D^{\beta} ( |\varphi|^{2\sigma} \varphi).
\end{equation}

  %Our particular interest is focused on ground state solutions, which are real, found by minimizing the energy and thus satisfy
%\Amin{symmetry of the solutions in the case $\beta>0$ is not straightforward to show}
% \begin{equation}\label{ground}
%   \omega Q-\lambda Q^{\prime\prime}=\zeta D^{\beta} ( Q^{2\sigma+1}).
% \end{equation}

\begin{theorem}
Let $\lambda,\omega,\sigma>0$.  Then, Equation \eqref{stand-3} possesses no nontrivial solution
$\ff\in \nx\cap L^{2\sigma+2}(\rr)$ if $\zeta=-1$, or if $\zeta=+1$ and either $\beta\geq1+\frac1{\sigma+1}$ or $\beta\leq-\frac{\sigma}{\sigma+1}$.
\end{theorem}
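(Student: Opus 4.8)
The plan is to extract two integral identities from \eqref{stand-3}, one of Nehari type and one of Pohozaev (scaling) type, and then combine them into a single linear relation whose sign I can read off. Throughout write $F=F(\ff)$, $K=\norm{D^{1-\beta/2}\ff}_\lt^2$ and $N=\int_\rr|\ff|^{2\sigma+2}\dd x$; all three are finite precisely because $\ff\in\nx\cap L^{2\sigma+2}(\rr)$, and $F,K\geq0$ with $F>0$, $K>0$ whenever $\ff\not\equiv0$.

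First I would apply $D^{-\beta}$ to \eqref{stand-3}; this is legitimate because $D^{-\beta}$ commutes with $\partial_x^2$, and the right-hand side collapses to $\zeta|\ff|^{2\sigma}\ff$. Pairing the resulting identity $\omega D^{-\beta}\ff-\lambda(D^{-\beta}\ff)''=\zeta|\ff|^{2\sigma}\ff$ with $\ff$ in $L^2$ and integrating by parts, using $\int_\rr\ff\,D^{-\beta}\ff\dd x=F$ and $-\int_\rr\ff\,(D^{-\beta}\ff)''\dd x=\int_\rr|D^{-\beta/2}\ff'|^2\dd x=K$, gives
\begin{equation}\label{nehari}
\omega F+\lambda K=\zeta N .
\end{equation}
In the defocusing case $\zeta=-1$ this already closes the proof: the left-hand side of \eqref{nehari} is $\geq0$ (indeed $>0$ for $\ff\not\equiv0$, since $\omega,\lambda>0$), while the right-hand side equals $-N\leq0$, so $F=K=N=0$, i.e. $\ff\equiv0$.

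For the focusing case $\zeta=+1$ I would produce a second, scaling, identity. After applying $D^{-\beta}$, equation \eqref{stand-3} is exactly the Euler--Lagrange equation $S'(\ff)=0$ of the action $S(\ff)=\tfrac\omega2F+\tfrac\lambda2K-\tfrac{\zeta}{2\sigma+2}N$. Testing $S'(\ff)=0$ against the generator of the dilation $\ff_\tau(x)=\ff(\tau x)$, i.e. evaluating $\frac{d}{d\tau}S(\ff_\tau)\big|_{\tau=1}=0$ with the Fourier-side scalings $F(\ff_\tau)=\tau^{-\beta-1}F$, $K(\ff_\tau)=\tau^{1-\beta}K$ and $N(\ff_\tau)=\tau^{-1}N$, yields the Pohozaev identity
\begin{equation}\label{pohozaev}
-\tfrac{\omega(\beta+1)}2F+\tfrac{\lambda(1-\beta)}2K+\tfrac{\zeta}{2\sigma+2}N=0 .
\end{equation}
Eliminating $N$ between \eqref{nehari} and \eqref{pohozaev} (using $\zeta^2=1$) gives the key relation
\begin{equation}\label{keyrel}
\omega\Big(\beta+1-\tfrac1{\sigma+1}\Big)F=\lambda\Big(1-\beta+\tfrac1{\sigma+1}\Big)K .
\end{equation}
Writing $a=\beta+1-\frac1{\sigma+1}$ and $b=1-\beta+\frac1{\sigma+1}$, and recalling $\omega,\lambda>0$ with $F,K>0$ for nontrivial $\ff$, relation \eqref{keyrel} is incompatible with $\ff\not\equiv0$ whenever $a$ and $b$ have strictly opposite signs, or one of them vanishes while the other does not. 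A direct check shows $b\leq0$ (together with $a\geq2>0$) exactly when $\beta\geq1+\frac1{\sigma+1}$, and $a\leq0$ (together with $b\geq2>0$) exactly when $\beta\leq-\frac{\sigma}{\sigma+1}$; in either range \eqref{keyrel} forces $F=K=0$, hence $\ff\equiv0$, which is the assertion. (As a consistency check, for $\beta=0$ one has $a,b>0$, so no obstruction arises, matching the classical existence of NLS ground states.)

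The main obstacle is rigor rather than algebra: identities \eqref{nehari} and especially the scaling identity \eqref{pohozaev} must be justified for $\ff$ merely in $\nx\cap L^{2\sigma+2}(\rr)$, with no a priori smoothness or decay. The clean route is to first bootstrap regularity from the equation (the elliptic operator $\omega-\lambda\partial_x^2$ gains two derivatives, yielding enough control on $D^{-\beta}\ff$), and then obtain \eqref{pohozaev} either by an approximation and cut-off argument, or, equivalently, by multiplying the $D^{-\beta}$-transformed equation by $x\ff'$ and integrating by parts with a truncation to tame the nonlocal term at infinity. Once the differentiability of $\tau\mapsto S(\ff_\tau)$ and the validity of $\frac{d}{d\tau}S(\ff_\tau)\big|_{\tau=1}=\la S'(\ff),\,\partial_\tau\ff_\tau|_{\tau=1}\ra=0$ are secured, the remainder reduces to the elementary sign analysis of \eqref{keyrel} above.
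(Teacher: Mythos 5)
Your proof is correct and follows essentially the same route as the paper: the paper's argument (deferred to Lemma 4.2 of \cite{amin}) rests on exactly the Pohozaev identities \eqref{poho-1}, which are the solved-out form of your Nehari and scaling identities --- indeed your key relation is their ratio, with $2\theta_1=\beta+1-\tfrac1{\sigma+1}$ and $2\theta_0=1-\beta+\tfrac1{\sigma+1}$, so the stated nonexistence ranges correspond precisely to $\theta_1\leq0$ and $\theta_0\leq0$. The defocusing case likewise drops out of the Nehari-type identity alone in both treatments.
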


\begin{proof}
The proof is followed from the same lines of one of \cite[Lemma 4.2]{amin} by using
the following Pohozaev identities
 \begin{equation}\label{poho-1}
\omega F(\ff)=\theta_0\norm{\ff}_{L^{2\sigma+2}}^{2\sigma+2}
,\qquad
 \norm{D^{1-\frac\beta2}\ff}_{L^2}^2=\theta_1\norm{\ff}_{L^{2\sigma+2}}^{2\sigma+2},
 \end{equation}
 where
 \begin{equation}\label{theta0}
     \theta_0=\frac{(\sigma+1)(1-\beta)+1}{2(\sigma+1)}
 \end{equation}
 and $\theta_1=1-\theta_0$; so we omit the details.
\end{proof}
The existence
of standing wave solution of \eqref{nonlocalNLS} with $\beta \in (-\frac{1}{2}, \frac{3}{2})$ and $\sigma=1$ for focusing case ($\zeta=1$) is discussed in \cite{amin}. Due to appearance of fractional derivatives, the existence of nontrivial solutions of \eqref{stand-3} can be also derived via  the maximization problem (see \cite{bfv})
\[
\sup_{0\not\equiv u\in \nx}\frac{\|u\|_{L^{2\sigma+2}}}{\|D^{-\frac\beta2}u\|_{L^2}^{1-\theta}\|D^{1-\frac\beta2}u\|_{L^2}^{\theta}},
\]
where $\theta=\frac\beta2+\frac{\sigma}{2(\sigma+1)}$. Similar to \cite{cvpde2018,amin}, we can show that
\[
 M_\omega=\inf\sett{S_\omega(u),\; u\in\nx,\;\scal{S'_\omega(u),u}=0}
\]
with $S_\omega=E+\frac\omega2 F$, attains a minimum that is (up to a scaling) a nontrivial solution of \eqref{stand-3}. Indeed, we can first show that there exists a nontrivial solution of \eqref{stand-3} which is derived by finding the critical points of $S_\omega$ enjoying the mountain-pass geometry. This is strongly connected to show that the Palais-Smale sequence is non-vanishing by following an argument similar to \cite[Lemma 2.14]{cvpde2018}. Moreover, these critical points are ground state at the same Mountain Pass level $M_\omega$.  We also observe that $M_\omega$ is independent of critical points if there is more than one critical point.

 Furthermore, if $\beta\leq0$, there exists an even, strictly
positive, decreasing solution of \eqref{stand-3}.
It is worth noting from \eqref{stand-3} that any solution of \eqref{stand-3} with $\beta>0$ is sign-changing.
\begin{theorem}\label{gs-thm}
Let $\lambda,\omega,\sigma>0$ and $-1<\beta<2$.  Then, \eqref{stand-3} possesses a ground state if
\[
\max\sett{0,\frac{-\beta}{1+\beta} }< \sigma<  \begin{cases}
     \frac{2-\beta }{\beta-1},&\beta>1,\\
    \infty,&\beta\leq1.
\end{cases}
\]
\end{theorem}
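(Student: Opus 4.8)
The plan is to realize the ground state as a least–action critical point of the functional $S_\omega=E+\frac\omega2 F$ and to extract it from a mountain–pass Palais–Smale sequence via concentration–compactness. Throughout I take $\zeta=+1$, the case $\zeta=-1$ being excluded by the preceding nonexistence theorem. The first observation is that the admissible range of $\sigma$ in the statement is exactly the range for which Lemma \ref{embed}, applied with $q=\sigma$, yields the continuous subcritical embedding $\nx\hookrightarrow L^{2\sigma+2}(\rr)$: together with $\sigma>0$, the conditions $\sigma>\max\{0,\frac{-\beta}{1+\beta}\}$ and (for $\beta>1$) $\sigma<\frac{2-\beta}{\beta-1}$ are equivalent to $0<\theta<1$, where $\theta=\frac\beta2+\frac{\sigma}{2(\sigma+1)}$ is precisely the Gagliardo–Nirenberg exponent in the maximization problem recalled above; the strict upper inequality keeps the embedding subcritical (hence locally compact). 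This guarantees that $S_\omega$ is well defined and $C^1$ on $\nx$, and a direct computation of its first variation, followed by applying $D^\beta$, shows that any nontrivial critical point of $S_\omega$ solves \eqref{stand-3}.

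Next I would verify that $S_\omega$ possesses the mountain–pass geometry. Writing $Q(u)=\lambda\|D^{1-\frac\beta2}u\|_\lt^2+\omega F(u)$ for the quadratic part, which is equivalent to $\|u\|_\nx^2$ since $\lambda,\omega>0$, and using $\int_\rr|u|^{2\sigma+2}\dd x\le C\|u\|_\nx^{2\sigma+2}$, the superquadratic nonlinearity ($2\sigma+2>2$) forces $S_\omega(u)\ge\delta>0$ on a small sphere $\|u\|_\nx=\rho$, while along any ray $t\mapsto tu_0$ the nonlinear term dominates and $S_\omega(tu_0)\to-\infty$. The mountain–pass theorem then produces a sequence $(u_n)\subset\nx$ with $S_\omega(u_n)\to c:=M_\omega>0$ and $S'_\omega(u_n)\to0$ in $\nx^\ast$. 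Boundedness of $(u_n)$ follows from the identity $S_\omega(u_n)-\frac{1}{2\sigma+2}\langle S'_\omega(u_n),u_n\rangle=\frac{\sigma}{2(\sigma+1)}Q(u_n)$, whose left-hand side is $c+o(1)+o(\|u_n\|_\nx)$.

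The crux, and the step I expect to be the main obstacle, is recovering compactness: because \eqref{nonlocalNLS} is translation invariant and $\rr$ is unbounded, a bounded Palais–Smale sequence may vanish or split. I would first rule out vanishing by Lions' lemma: if $\sup_y\int_{y-R}^{y+R}|u_n|^2\dd x\to0$ then $u_n\to0$ in $L^{2\sigma+2}(\rr)$, so $\int_\rr|u_n|^{2\sigma+2}\dd x\to0$ and, through $\langle S'_\omega(u_n),u_n\rangle\to0$, also $Q(u_n)\to0$, contradicting $c>0$. Hence vanishing is excluded, and after replacing $u_n$ by a translate $u_n(\cdot-y_n)$, which preserves both $S_\omega(u_n)$ and $S'_\omega(u_n)$, I may assume $u_n\rightharpoonup\varphi$ with $\varphi\neq0$. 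The extra difficulty compared with the local NLS is that the nonlocal operators $D^{-\frac\beta2}$ and $D^{1-\frac\beta2}$ do not interact cleanly with sharp spatial cut-offs, so the concentration–compactness splitting and the Brezis–Lieb decomposition of the $L^{2\sigma+2}$ and $\nx$ norms must be performed in Fourier variables with care; this is exactly where the argument of \cite[Lemma 2.14]{cvpde2018} enters.

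Finally I would pass to the limit: weak convergence handles the quadratic terms, while the local subcritical compactness of the embedding on bounded sets allows passage to the limit in the nonlinear term, giving $S'_\omega(\varphi)=0$, so $\varphi$ is a nontrivial solution of \eqref{stand-3}. To identify $\varphi$ as a ground state I would use the Nehari characterization $M_\omega=\inf\{S_\omega(u):u\in\nx\setminus\{0\},\ \langle S'_\omega(u),u\rangle=0\}$, which for this homogeneous superlinear nonlinearity coincides with the mountain–pass level: weak lower semicontinuity gives $S_\omega(\varphi)\le\liminf_n S_\omega(u_n)=M_\omega$, while $\varphi$ lies on the Nehari manifold and hence $S_\omega(\varphi)\ge M_\omega$; equality shows that $\varphi$ attains the least action and is the desired ground state.
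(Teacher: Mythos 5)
Your proposal is correct and takes essentially the same route the paper sketches: a mountain-pass argument for $S_\omega=E+\frac{\omega}{2}F$ on $\nx$, with the subcritical embedding of Lemma \ref{embed} (your condition $0<\theta<1$ is exactly the stated range of $\sigma$) providing the geometry, non-vanishing of the Palais--Smale sequence handled by the concentration-compactness argument of \cite[Lemma 2.14]{cvpde2018}, and the ground state identified at the Nehari level $M_\omega$. One small caution: for $\beta<0$ the $\nx$-norm does not control local $L^2$ masses (the embedding of Lemma \ref{embed} only reaches $q>\frac{-\beta}{1+\beta}>0$), so the Lions vanishing step should be formulated in $L^{2\sigma+2}$ rather than with $\sup_y\int_{y-R}^{y+R}|u_n|^2\dd x$, which is precisely the adaptation carried out in the cited lemma.
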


\subsection{Numerical generation of standing wave solutions}

Now, we study the form of the standing wave solutions numerically.
By using the scaling $\ff(x)=\omega^{\frac{2-\beta}{4\sigma}}\tilde\ff(\omega^{\frac12}x)$, we can assume that $\omega=1$ in \eqref{stand-3}.
  Since we do not know the explicit standing wave solutions
 of equation \eqref{nonlocalNLS} for any nonzero $\beta$, we first use the Petviashvili iteration method \cite{pelinovsky, petviashvili, yang} to generate the {standing} wave solutions numerically. Petviashvili's iteration method was first introduced by V.I. Petviashvili for the  Kadomtsev-Petviashvili equation
in \cite{petviashvili} to generate a solitary wave solution. Applying the Fourier transform
to equation \eqref{stand-3} for $\omega=1, \zeta=1$ yields
\begin{equation}
  (1+\lambda k^2) \widehat{Q}= |k|^{\beta}  \widehat{ (|Q|^{2\sigma} Q)},  \label{groundhat}
\end{equation}
where $\widehat{Q}$ is the Fourier transform of the approximation to the profile $\varphi$. The standard iterative algorithm is given in the form
\begin{equation}
\widehat{Q}_{n+1}(k) = \frac{|k|^{\beta} \widehat{ (|Q_n|^{2\sigma} Q_n)}}{1+\lambda k^2}.
\end{equation}
The main idea of the Petviashvili method is to add a stabilizing factor into the fixed-point iteration. Therefore, we avoid the iterated solution converging to zero solution or diverging. The Petviashvili method  for equation \eqref{stand-3} is given by
\begin{equation}
\widehat{Q}_{n+1}(k) = (M_n)^{\nu} \frac{|k|^{\beta} \widehat{ (|Q_n|^{2\sigma} Q_n)}}{1+ \lambda k^2}
\label{petviashvilischeme}
\end{equation}
with
\begin{equation*}
  M_n=\frac{\int_{\mathbb{R}} [(1+\lambda k^2) [\widehat{Q}_{n}(k)]^2 \dd k }
  {\int_{\mathbb{R}}  {|k|^{\beta} \widehat{ (|Q_n|^{2\sigma} Q_n)}} \widehat{Q}_{n}(k) \dd k },
\end{equation*}
for some parameter $\nu$. Following \cite{pelinovsky}, we choose $\nu=\displaystyle\frac{2\sigma+2}{2\sigma+1}$ to provide fastest convergence.

\begin{figure}[h!bt]
 \begin{minipage}[t]{0.45\linewidth}
   \includegraphics[width=3.6in]{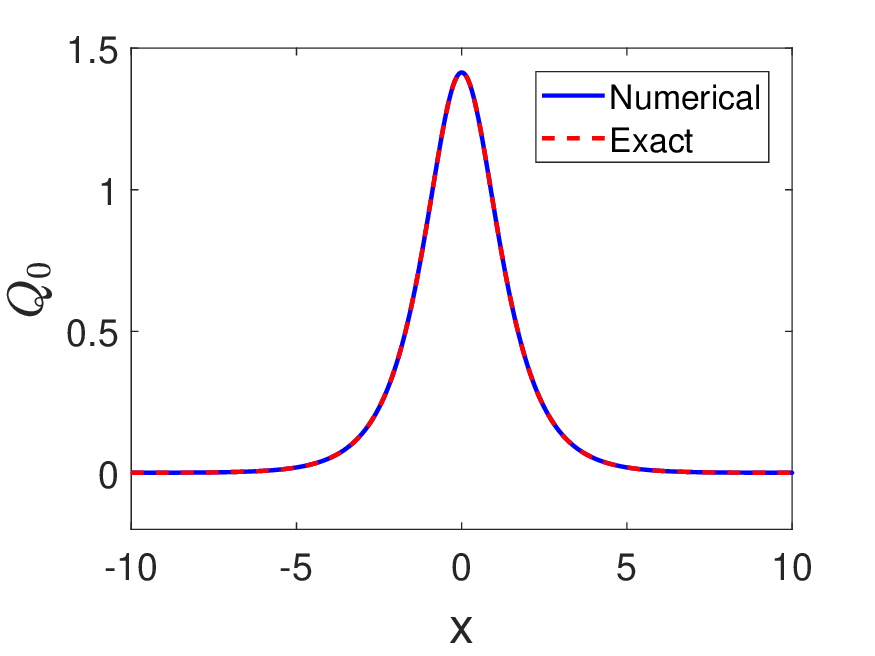}
 \end{minipage}
\hspace{30pt}
\begin{minipage}[t]{0.45\linewidth}
   \includegraphics[width=3.6in]{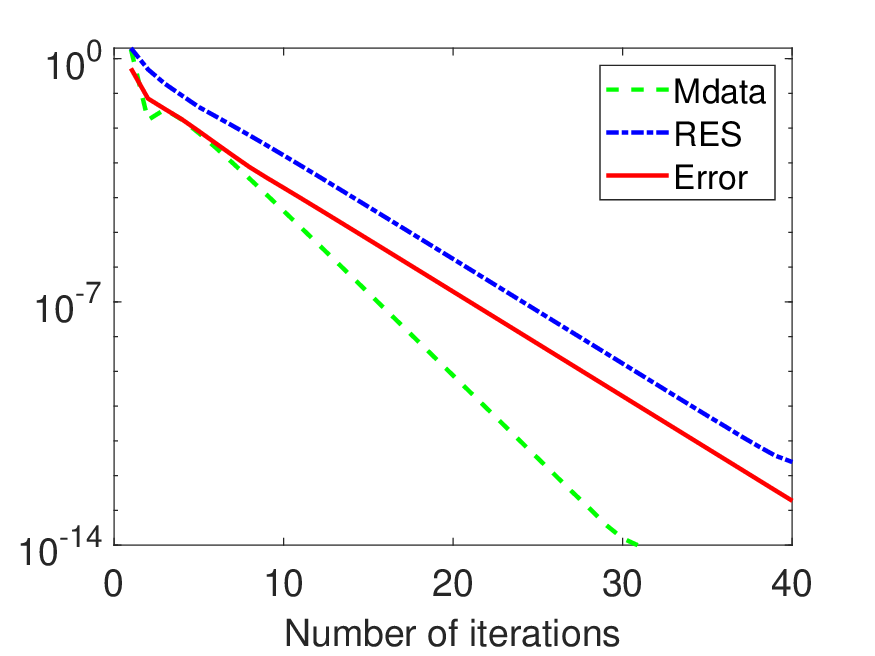}
 \end{minipage}
  \caption{Both exact and generated standing wave solutions of focusing NLS  equation ($\beta=0$)  on the interval $[-10,10]$ and the variation of the $Error(n)$, $|1-M_n|$ and $RES$ with the number of iterations in semi-log scale.}
 \label{nls}
\end{figure}

The overall iterative process is  controlled by the  error,
\begin{equation}
  Error(n)=\|Q_n-Q_{n-1}\|_\infty,~~~~n=0,1,.... \nonumber
\end{equation}
 between two consecutive iterations defined with the  number of iterations,  the stabilization factor error
\begin{equation}
|1-M_n|, ~~~~n=0,1,.... \nonumber
\end{equation}
and the residual error
\begin{equation}
{RES(n)}= \|{\cal S} Q_n\|_\infty, ~~~~n=0,1,.... \nonumber
\end{equation}
where
\begin{equation}
{\cal S}Q= (1+ pk^2) \widehat{Q}- |k|^{\beta} \widehat{ Q^{2\sigma+1}}. \nonumber
\end{equation}

\begin{figure}[h!]
 \begin{minipage}[t]{0.45\linewidth}
\includegraphics[width=3.6in]{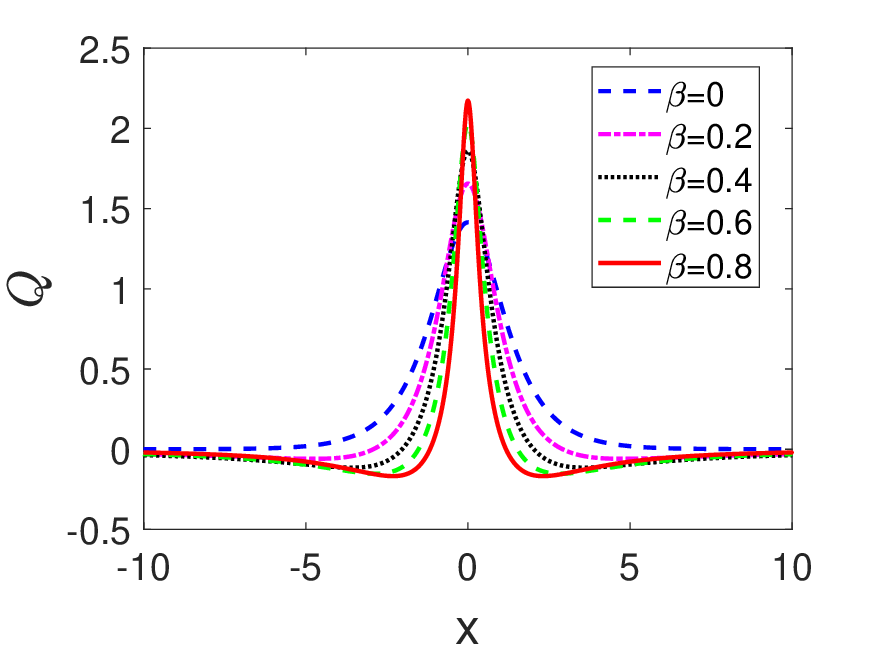}
 \end{minipage}
\hspace{30pt}
\begin{minipage}[t]{0.45\linewidth}
   \includegraphics[width=3.6in]{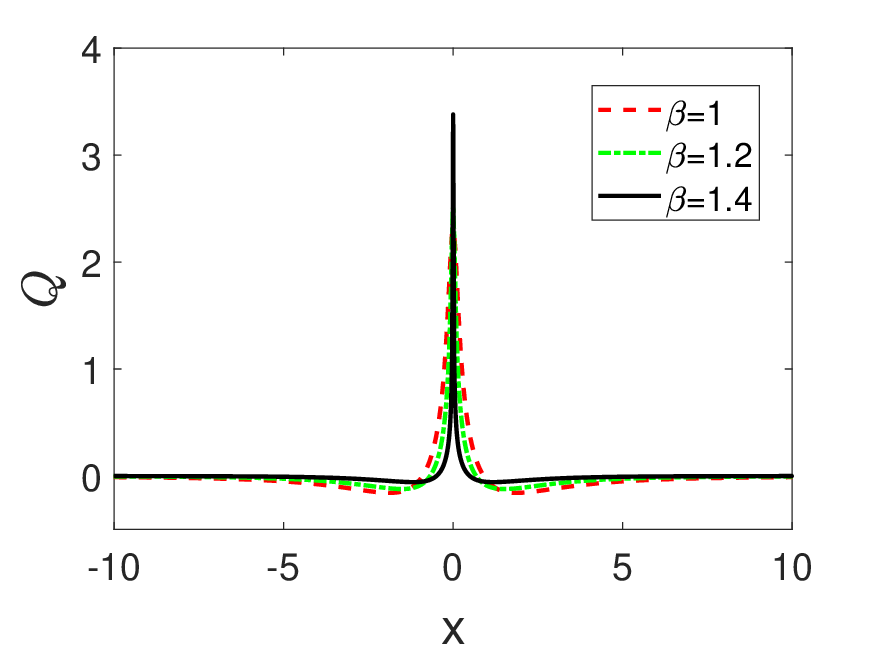}
 \end{minipage}
 \hspace{30pt}
  \caption{Computed standing wave solutions of \eqref{stand-3} for several values of $\beta\in [0,\frac{3}{2})$ and $\omega=1$.}
 \label{groundstate-nonlocalNLS1}
\end{figure}

\begin{figure}[h!]
\centering
\includegraphics[width=3.6in]{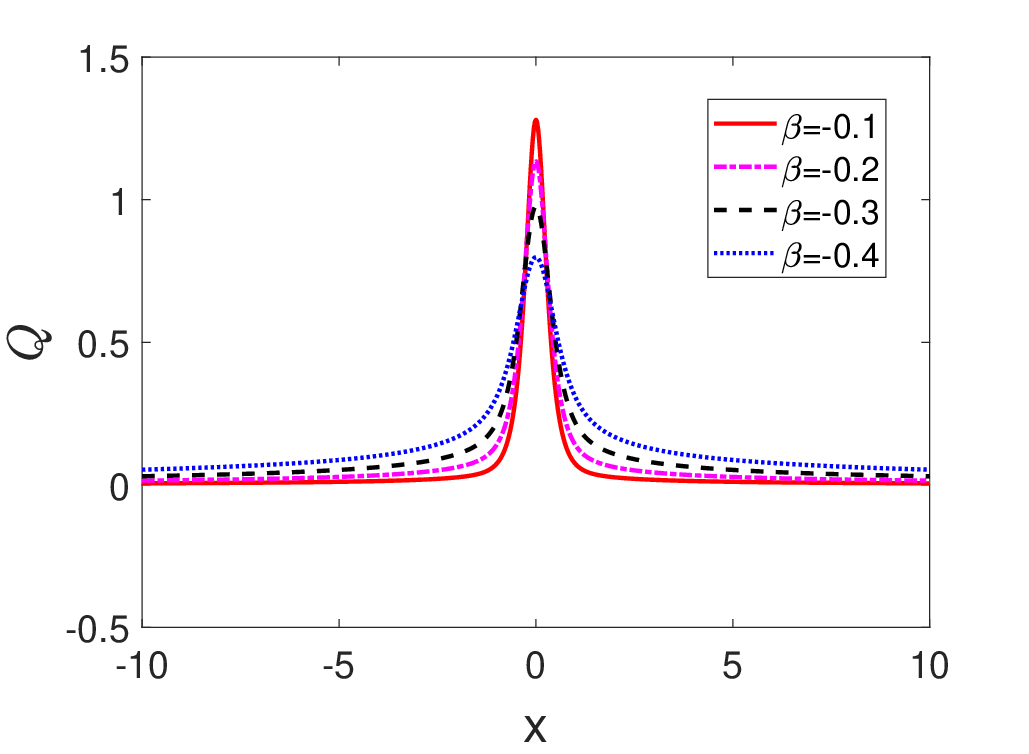}
%[height=5.5cm,width=7.5cm]
\caption{Computed standing wave solutions of \eqref{stand-3} for several values of $\beta \in (-\frac{1}{2}, 0)$ and $\omega=1$.}
\label{groundstate-nonlocalNLS2}
\end{figure}

Introducing the time-reverse transformation $v(x,T)=u(x,-t)$, equation \eqref{nonlocalNLS} is equivalent to
\begin{equation}
     \ii v_T+\lambda v_{xx}=-\zeta D^{\beta} (| v |^{2\sigma} v) \label{NLS}]
\end{equation}
This equation turns out to be well-known focusing Schrödinger equation for $\lambda=1,\zeta=1$, and $\beta=0$.
The standing wave solutions are of the form $v(x,T)=\ee^{ \ii \omega T} \Phi(x)$, where $\omega>0$. Equation \eqref{NLS} is
satisfied if and only if $\Phi$ is the solution of the stationary equation
\begin{equation}
-\omega \Phi+\Phi^{\prime\prime}= - |\Phi|^{2\sigma} \Phi.
\end{equation}

\begin{figure}[h!]
		\begin{minipage}[t]{0.45\linewidth}
			\centering
			%\hspace*{-20pt}
			\includegraphics[height=5.5cm,width=7.5cm]{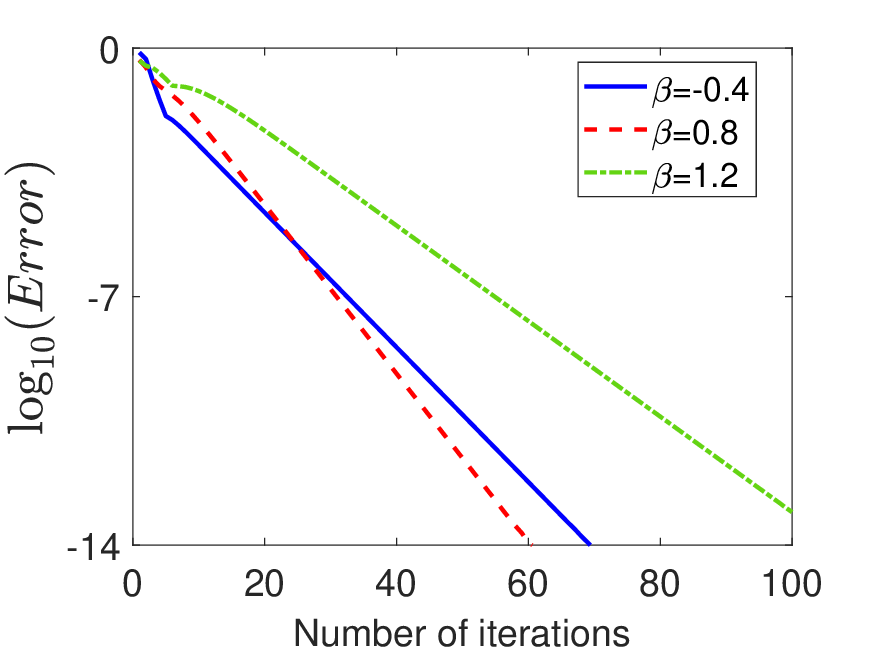}
		\end{minipage}%
		\hspace{20pt}
		\begin{minipage}[t]{0.45\linewidth}
			\centering
			\includegraphics[height=5.5cm,width=7.5cm]{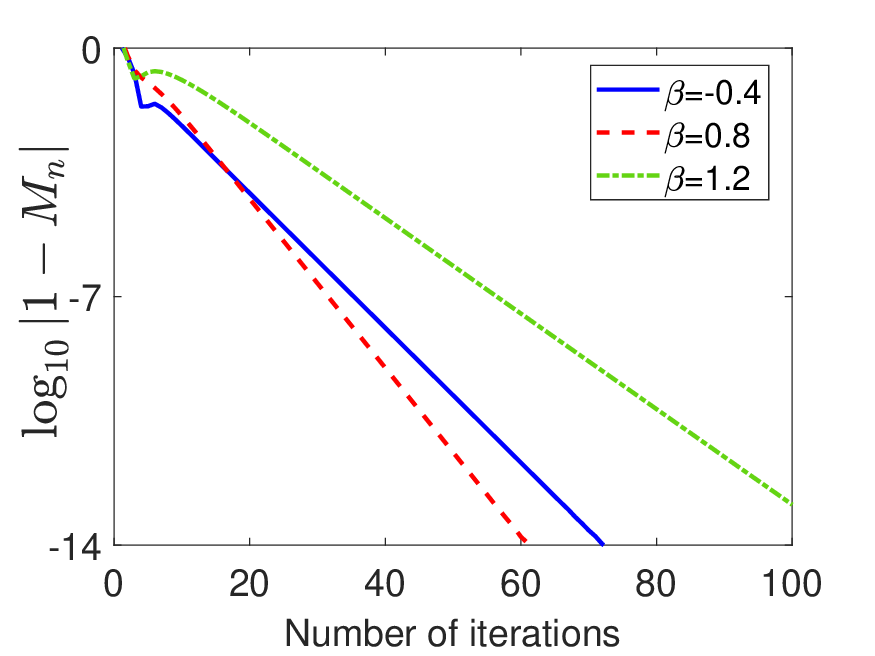}
		\end{minipage}
		\begin{minipage}[t]{0.45\linewidth}
			\centering
			%\hspace*{-20pt}
			\includegraphics[height=5.5cm,width=7.5cm]{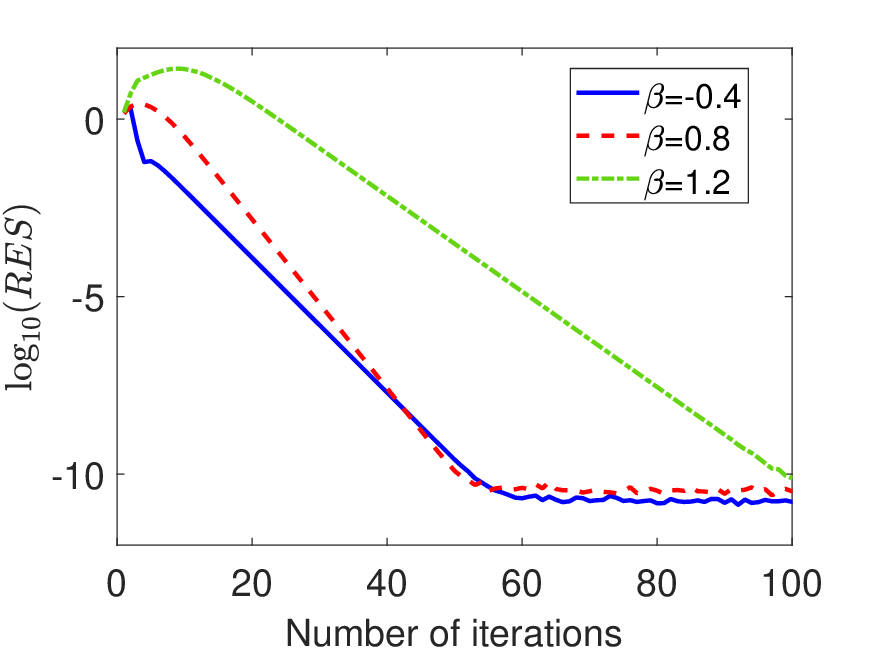}
		\end{minipage}%
		\hspace{40pt}
		\begin{minipage}[t]{0.45\linewidth}
			\centering
			\includegraphics[height=5.5cm,width=7.5cm]{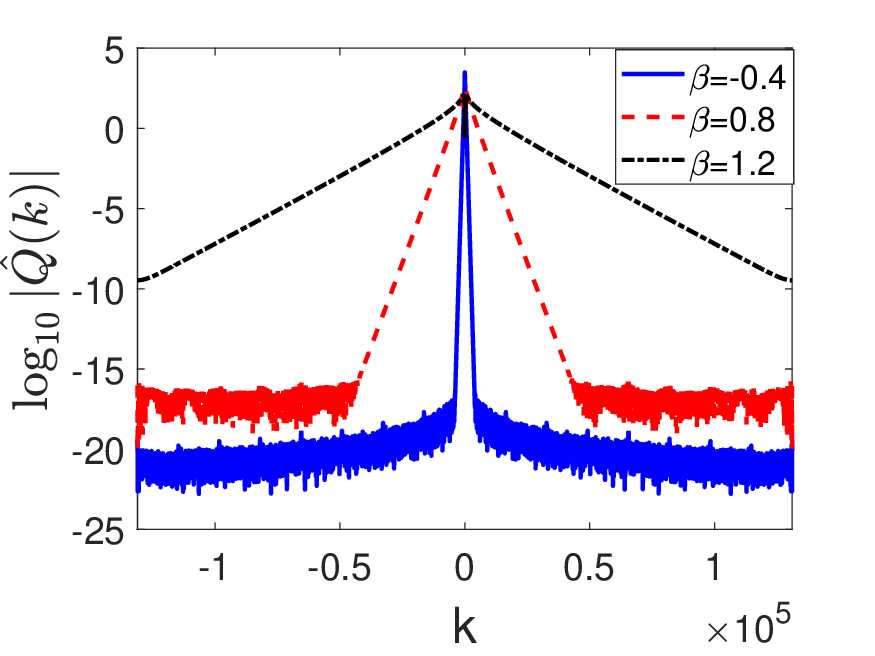}
	\end{minipage}
		\caption{ The variation of three different errors with the number of iterations and the modulus of the Fourier coefficients for the standing wave profiles correspond to $\beta=-0.4, \beta=0.8$ and $\beta=1.2$.}\label{errors}
	\end{figure}

\noindent
Its exact standing wave  solution  is well-known (see e.g. \cite{sulem})
\begin{equation}\label{exact}
    {Q_0}(x)=\frac{(\sigma +1)^{\frac1{2\sigma}}}{\cosh^{\frac1\sigma} (\sigma x)} .
\end{equation}
  We choose the space interval as  $x \in [-1000,1000] $ taking the number of grid points as $N=2^{18}$.
\noindent
In Figure \ref{nls}, we present both numerical and exact standing wave solution of \eqref{NLS} on the interval $[-10,10]$ to see more visible and
the variation of three different errors
with the number of iterations in semi-log scale.  As it is seen from Figure \ref{nls}, our proposed numerical scheme captures the exact standing wave solution  for $\beta=0$ remarkably well.
 $L^{\infty}-$error between numerical and exact standing wave solution is also about $10^{-14}$.

\noindent
For nonlocal NLS equation with nonzero $\beta \in (-\frac{1}{2}, \frac{3}{2})$ for focusing case ($\zeta=\lambda=\sigma=1$), no explicit solutions of \eqref{stand-3} are known. Computed standing wave solutions of
\eqref{stand-3} for several values of $\beta\in [0, \frac{3}{2})$ and $\omega=1$ are depicted in Figure \ref{groundstate-nonlocalNLS1}.

The generation of the standing wave solutions of the nonlocal NLS equation with $\beta \in (-\frac{1}{2}, 0)$ is numerically challenging. The discontinuity
of the  Fourier multiplier in \eqref{groundhat},   i.e.  $|k|^{\beta}  \widehat{ Q^{2\sigma+1}}(k)$, at $k=0$ is resolved
by commonly-used technique of setting $|k|^{\beta}$ as $0$. Computed standing wave solutions of
\eqref{stand-3} for several values of $\beta \in (-\frac{1}{2}, 0)$ and $\omega=1$ are illustrated in Figure \ref{groundstate-nonlocalNLS2}.
Figures  \ref{groundstate-nonlocalNLS1} and \ref{groundstate-nonlocalNLS2} show that the standing wave profiles become more peaked
with the larger values of $\beta$.  In Figure \ref{errors}, we show the variation of three different errors with the number of iterations and the modulus of the Fourier coefficients for the standing wave profiles correspond to $\beta=-0.4, \beta=0.8$ and $\beta=1.2$. The numerical results confirm the convergence of the iterative scheme \eqref{petviashvilischeme}.  The accuracy of the
approximation in space is also controlled by the Fourier coefficients since the numerical error is of the order of the Fourier coefficients
for the highest wave numbers. It can be seen that the Fourier coefficients decrease
 to machine precision for the high wavenumbers when $\beta=-0.4, \beta=0.8$, whereas they decrease to $10^{-9}$ when $\beta=1.2$.
This shows that the solution is numerically well resolved.

\subsection{Uniformly bounded   solutions}
Next, we investigate the long time behavior of the solutions of \eqref{nonlocalNLS}. The following theorem give the conditions under which the solutions of \eqref{nonlocalNLS} are uniformly bounded in the energy space $\nx$.
\begin{theorem}\label{uniform}
	Let $\lam>0$, $u_0\in \nx$, {$-1<\beta<2$},  and $u\in C([0,T);\nx)$ be the
	solution of \eqref{nonlocalNLS}, associated with the initial value $u_0$, and $Q$ be a ground state of \eqref{stand-3}.  Then
	$u(t)$ is uniformly bounded in $\nx$, for $t\in [0,T)$, if  $\zeta=-1$, or $\zeta=+1$  and   one of the
	following cases occurs:
	\begin{enumerate}[(i)]
		\item $\sigma<\frac{2-\beta}{1+\beta}$;
  \item $\sigma=\frac{2-\beta}{1+\beta}$ and
  \[
  F(u_0)< \lam ^\frac{1+\beta}{2-\beta}F(Q);
  \]
  \item $\sigma>\frac{2-\beta}{1+\beta}$, and $u_0$ satisfies
  \[
       \|D ^{ 1-\frac\beta2 }u_0\|_\lt^{\beta(\sigma+1)+\sigma-2}
      (F(u_0))^\frac{(1+\sigma)(1-\beta)+1}{2}
      <\lam
      \|D ^{ 1-\frac\beta2 }Q\|_\lt^{\beta(\sigma+1)+\sigma-2}
      (F(Q))^\frac{(1+\sigma)(1-\beta)+1}{2}
      \]
      and
      \[
          (E(u_0))^{\frac{\beta(\sigma+1)+\sigma-2}{2}}  (F(u_0))^\frac{(1+\sigma)(1-\beta)+1}{2}
      <
      \lam^{\frac{\beta(1+\sigma)+\sigma}{2}}
            (E(Q))^{\frac{\beta(\sigma+1)+\sigma-2}{2}}(F(Q))^\frac{(1+\sigma)(1-\beta)+1}{2}.
  \]
	\end{enumerate}
\end{theorem}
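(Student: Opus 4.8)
The plan is to reduce everything to controlling the single quantity $y(t):=\norm{D^{1-\frac\beta2}u(t)}_\lt^2$, since $F(u(t))\equiv F(u_0)$ is conserved and $\norm{u}_\nx^2=F(u)+y(t)$. Because $\norm{D^{-\frac\beta2}u_x}_\lt^2=y$, energy conservation reads
\begin{equation*}
2E(u_0)=\lam\,y(t)-\frac{\zeta}{\sigma+1}\norm{u(t)}_{L^{2\sigma+2}}^{2\sigma+2}.
\end{equation*}
When $\zeta=-1$ both terms on the right are nonnegative, so $\lam\,y(t)\le 2E(u_0)$ and the bound is immediate. For the focusing case $\zeta=+1$ I would invoke Lemma~\ref{embed} with $q=\sigma$ to obtain the sharp Gagliardo--Nirenberg inequality
\begin{equation*}
\norm{u}_{L^{2\sigma+2}}^{2\sigma+2}\le C_{GN}\,(F(u))^{b}\,y^{a/2},\qquad a=\beta(\sigma+1)+\sigma,\quad b=\tfrac{(1+\sigma)(1-\beta)+1}{2},
\end{equation*}
and observe that $a/2\lessgtr1$ is equivalent to $\sigma\lessgtr\frac{2-\beta}{1+\beta}$, which produces exactly the three regimes of the statement.

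In the subcritical regime (i), $a/2<1$, so substituting the inequality into energy conservation gives $\lam y\le 2E(u_0)+K y^{a/2}$ with $K=\frac{C_{GN}}{\sigma+1}(F(u_0))^{b}$; since the exponent is $<1$, Young's inequality absorbs $Ky^{a/2}\le\frac\lam2 y+C$ and yields a uniform bound on $y$ with no further restriction on $u_0$. In the critical regime (ii), $a/2=1$ and the same step gives $(\lam-K)y\le 2E(u_0)$, so uniform boundedness holds precisely when $K<\lam$. Here I would compute $C_{GN}$ from the optimizer $Q$ using the Pohozaev identities \eqref{poho-1}: from $\norm{Q}_{L^{2\sigma+2}}^{2\sigma+2}=y_Q/\theta_1$ one gets $C_{GN}=y_Q^{1-a/2}/(\theta_1 (F(Q))^{b})$, and the algebraic identities $a=(\sigma+1)(1+\beta)-1$ and $(\sigma+1)\theta_1=a/2$ (both checked directly from \eqref{theta0}) reduce $K<\lam$ to $F(u_0)<\lam^{(1+\beta)/(2-\beta)}F(Q)$, i.e. exactly condition (ii).

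The supercritical regime (iii), $a/2>1$, is the heart of the matter and I would handle it by a Weinstein/Holmer--Roudenko-type continuity (barrier) argument. Set $g(y)=\lam y-K y^{a/2}$; energy conservation together with the sharp inequality gives $g(y(t))\le 2E(u_0)$ for all $t$. The function $g$ increases on $[0,y_\ast]$ and decreases afterwards, where $g'(y_\ast)=0$, and one computes $y_\ast=\lam^{2/(a-2)}(F(Q)/F(u_0))^{2b/(a-2)}y_Q$ and $g(y_\ast)=\frac{a-2}{a}\lam y_\ast=2\lam^{a/(a-2)}(F(Q)/F(u_0))^{2b/(a-2)}E(Q)$, using $E(Q)=\frac{a-2}{2a}y_Q$. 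Raising the two inequalities $y(0)<y_\ast$ and $g(y_\ast)>2E(u_0)$ to the power $(a-2)/2$ reproduces precisely the two hypotheses of case (iii); note that the second inequality forces $E(u_0)>0$, so $g=2E(u_0)$ has two positive roots bracketing $y_\ast$. With these facts, continuity of $t\mapsto y(t)$ (from $u\in C([0,T);\nx)$) forbids $y(t)$ from entering the band around $y_\ast$ on which $g>2E(u_0)$, so $y(t)$ stays trapped below the smaller root of $g=2E(u_0)$, giving the uniform bound.

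The main obstacle is the supercritical case: one must (a) know that the Gagliardo--Nirenberg constant in Lemma~\ref{embed} is attained, and that the optimizer coincides (after the $\omega=1$, $\lam=1$ normalization) with the ground state $Q$ produced by the maximization problem recalled before Theorem~\ref{gs-thm}, so that $C_{GN}$ can be expressed through $Q$; and (b) carry out the exponent bookkeeping that converts the two scale-invariant ``below the ground state'' inequalities into the explicit conditions of the theorem. The identities $a=(\sigma+1)(1+\beta)-1$ and $(\sigma+1)\theta_1=a/2$ are exactly what make the $\lam$-powers and the $F$/$E$-exponents line up, and the continuity argument itself is then routine once the barrier height $g(y_\ast)$ has been correctly identified with $2E(Q)$ up to the mass-ratio factor.
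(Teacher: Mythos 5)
Your proposal is correct and follows essentially the same route as the paper: conservation of energy combined with the sharp Gagliardo--Nirenberg inequality of Lemma \ref{embed} with $q=\sigma$, the optimal constant expressed through the ground state via the Pohozaev identities \eqref{poho-1}, and then a continuity/barrier argument in $\phi(t)=\norm{D^{1-\frac\beta2}u(t)}_{L^2}^2$. The only difference is presentational: the paper invokes Cazenave's bootstrap lemma (Lemma \ref{cazenave}) to run the barrier step, whereas you carry out the trap by hand in the supercritical case (iii) and dispatch (i) and (ii) directly via Young's inequality and elementary algebra, which is the same mechanism.
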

%\blue{Can we remove the following explanation? Because I have mentioned at page 14}

%%%%%%%%%%%%%%%%%%%%%%%%%%%%%%%%%%%%%%%%%%%%%%
% {As an application of the above theorem, we can take the perturbed ground state $u_0=AQ$ and $\lam=1$, so the cases (ii) and (iii) in Theorem \ref{uniform} turn into

% (ii) $\sigma=\frac{2-\beta}{1+\beta}$ and $A^2<1$, %\Amin{If we take $\lam=\sigma=1$, then  $\beta=1/2$ and $A^2<1$.}

% (iii) $\sigma>\frac{2-\beta}{1+\beta}$ with
% \[
% E(AQ)=
%  \paar{\frac{\beta(\sigma+1)+\sigma}{2}-A^{2\sigma}}\frac{2A^2}{\beta(\sigma+1)+\sigma-2}E(Q),
% \]
% that is equivalent to $A^{2\sigma}<1$ and
% % \Amin{I hope that this is correct}
%  \[
% \paar{\frac{\beta(\sigma+1)+\sigma}{2}-A^{2\sigma}}\frac{2A^{ \frac{4\sigma}{\beta(\sigma+1)+\sigma-2}}}{\beta(\sigma+1)+\sigma-2}
% <1.
%  \]
% }

 %%%%%%%%%%%%%%%%%%%%%%%%%%%%%%%%%%%%%%%%%%%%%%%%%%%%%%%%%%%%%%%%%%%%%%

To prove   Theorem \ref{uniform}, we resort to the following lemma (\cite[Lemma 7.7.4]{cazenave}).
\begin{lemma}\label{cazenave}
Let $a,b > 0$ and $p > 1$. Assume that $b$ is small enough so that
the function $f (x) = a - x + bx^p$ is negative for some $x>0$, and let $x_0$ be the first
(positive) zero of $f$. Let $I\subset\rr$ be an interval and let $\phi\in C(I,\rr^+)$ satisfy
    \[
    \phi(t)\leq a+b\phi(t)^p\qquad \forall t\in I.
    \]
If $\phi(t_0)\leq x_0$ for some to $t_0\in I$, then $\phi(t)\leq x_0$ for
all $t\in I$.
\end{lemma}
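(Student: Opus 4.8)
The plan is to recast the scalar inequality as a sign condition on $f$ and then exploit the continuity of $\phi$ together with the connectedness of $I$. First I would observe that the hypothesis $\phi(t)\le a+b\phi(t)^p$ is \emph{equivalent} to $f(\phi(t))\ge0$ for every $t\in I$, since $f(x)=a-x+bx^p$. Hence the whole argument reduces to understanding the superlevel set $\sett{x\ge0:\ f(x)\ge0}$ of $f$ on $[0,\infty)$ and to showing that the continuous path $\phi$ cannot cross the region where $f$ is negative.

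Next I would analyze the shape of $f$. Since $f(0)=a>0$, $f(x)\to+\infty$ as $x\to\infty$ (because $p>1$), and $f'(x)=-1+bp\,x^{p-1}$ vanishes at the single point $x_\ast=(bp)^{-1/(p-1)}$, the function $f$ is strictly decreasing on $(0,x_\ast)$ and strictly increasing on $(x_\ast,\infty)$, with a unique global minimum at $x_\ast$. The smallness assumption on $b$, which guarantees $f<0$ somewhere, forces $f(x_\ast)<0$. Consequently $f$ has exactly two positive zeros $x_0<x_1$, where $x_0$ is the first one from the statement, and the sign pattern is
\[
f>0 \text{ on } [0,x_0),\qquad f<0 \text{ on } (x_0,x_1),\qquad f>0 \text{ on } (x_1,\infty).
\]
In particular $\sett{x\ge0:\ f(x)\ge0}=[0,x_0]\cup[x_1,\infty)$, a disjoint union of two closed intervals separated by the nonempty gap $(x_0,x_1)$.

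Finally I would run a continuity argument. By the previous step, $f(\phi(t))\ge0$ for all $t\in I$ means $\phi(t)\in[0,x_0]\cup[x_1,\infty)$ for every $t$. Suppose toward a contradiction that $\phi(t_1)>x_0$ for some $t_1\in I$; then necessarily $\phi(t_1)\ge x_1$. Since $\phi(t_0)\le x_0<x_1$, the intermediate value theorem applied to the continuous function $\phi$ on the subinterval with endpoints $t_0$ and $t_1$ produces a point $t_\ast$ with $\phi(t_\ast)=\tfrac12(x_0+x_1)\in(x_0,x_1)$, at which $f(\phi(t_\ast))<0$, contradicting the hypothesis. Therefore $\phi(t)\le x_0$ for every $t\in I$, as claimed.

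The only delicate points, and the step I would guard most carefully, are the derivation of the exact two-zero structure of $f$ and the strict inequality $x_0<x_1$: both hinge on reading the smallness-of-$b$ hypothesis as the assertion $f(x_\ast)<0$, which is what makes the gap $(x_0,x_1)$ genuinely nonempty, so that the intermediate value step actually lands on a point with $f<0$. Everything else is soft topology, namely continuity of $\phi$ combined with connectedness of the interval $I$.
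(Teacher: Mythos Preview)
Your argument is correct and is exactly the standard proof of this lemma. Note, however, that the paper does not supply a proof at all: the lemma is simply quoted from \cite[Lemma~7.7.4]{cazenave} and used as a black box in the proof of Theorem~\ref{uniform}, so there is nothing to compare against beyond the original reference, whose proof is the connectedness argument you wrote down.
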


\begin{proof}[Proof of Theorem \ref{uniform}]
For simplicity, let us assume $\lambda=1$. The case $\zeta=-1$ is an immediate application of the invariance $E$. In the case $\zeta=+1$, applying Lemma \ref{embed} yields
    \[
   \begin{split}
       2E(u_0)&= 2E(u)\geq \phi(t)-\frac{C^{2\sigma+2}}{ \sigma+1 }(F(u))^{\frac\sigma2+1-(\frac{\sigma+1}{2})\beta}
(\phi(t)) ^
  { \frac{(\sigma+1)\beta+\sigma}{2} }\\
  &\geq \phi(t)-\frac{C^{2\sigma+2}}{ \sigma+1 }(F(u_0))^{\frac\sigma2+1-(\frac{\sigma+1}{2})\beta}
(\phi(t)) ^
  { \frac{(\sigma+1)\beta+\sigma}{2} },
   \end{split}
    \]
    where $\phi(t)=\norm{D^{1-\frac\beta2}u(t)}_{\lt}^2$.  On the other hand, following the arguments in \cite{zamp}, it can be shown that the optimal constant $C$ in \eqref{gn} satisfies
 \begin{equation}\label{bestc}
        C^{-1}=\theta_0^{\frac{\theta_0}{2}}\theta_1^{\frac{\theta_1}{2}}\|\ff\|_{L^{2\sigma+2}}^{2\sigma+2}
  \end{equation}
    where $\theta_0$ is the same as in \ref{theta0},
$\theta_1=1-\theta_0$, and $\ff$ is a ground state of
\eqref{stand-3}.
The   Pohozaev identities
 \eqref{poho-1}
show that
\[
E(\ff)=\frac{\theta_1(\sigma+1)-1}{2(\sigma+1)}\|\ff\|_{L^{2\sigma+2}}^{2\sigma+2}.
\]
    By applying Lemma \ref{cazenave} with $a=2E(u_0)$, $p= \frac{(\sigma+1)\beta+\sigma}{2}$, and
    \[
    b=\frac{C^{2\sigma+2}}{\sigma+1}(F(u_0))^{\theta_1(\sigma+1)},
    \]
  the results is deduced.
\end{proof}
\begin{remark}    \label{blow-up-theo}
 Let $\lam=\zeta=1$,   $\sigma>\frac{2-\beta}{1+\beta}$,   $u_0\in\nx$,   and $u\in C([0,T);\nx)$ be the
	solution of \eqref{nonlocalNLS}, associated with the initial value $u_0$. When $\beta=0$, it was demonstrated in \cite{glass} that for any negative initial data in $\nx=H^1(\rr)$ has finite variance, i.e. satisfying $xu_0\in L^2(\rr)$, the corresponding solution of
\eqref{NLS-0}
  enjoys the Variance-Virial Law
 \[
 \frac14\frac{\dd}{\dd t}\|xu(t)\|_{L^2}^2=\Im\scal{u(t),x\cdot \nabla u}_{L^2},
 \]
 and blows up in finite time when $\sigma>2$. The existence of blow-up solutions for negative radial data in space dimensions $n\geq2$ and for negative data in the one-dimensional case was established in \cite{ot-1,ot-2}. Additionally, it was shown in \cite{Hol-R} that in the mass and energy intercritical case, if initial data has nonnegative energy and satisfies the following inequalities:
 \begin{equation}\label{hol-r}
\begin{split}
    &\|\nabla u_0\|_{L^2}^{s_{c,n} }
     \|u_0\|_{L^2}^{1-s_{c,n}}
      >
   \|\nabla R\|_{L^2}^{s_{c,n} }
     \|R\|_{L^2}^{1-s_{c,n}},
 \\
    &(E(u_0))^{s_{c,n}}  (F(u_0))^ {1-s_{c,n}}
    <
    (E(R))^{s_{c,n}}(F(R))^{1-s_{c,n}},
\end{split}
\end{equation}
and if, in addition, $xu_0\in L^2(\rr^n)$ or $u_0$ is radial with $n\geq2$ and $s_{c,n}>0$, then the corresponding solution of
 \begin{equation}\label{nls-n}
 \ii u_t-\Delta u=|u|^{2\sigma}u,\quad x\in \rr^n
 \end{equation}
 blows up in finite-time. Here, $s_{c,n}=\frac n2-\frac1\sigma$ and $R$ is the ground state of \eqref{nls-n}, which is the unique (up to symmetries) positive radial solution of the elliptic equation
\begin{equation}\label{ell}
    \Delta R-R+|R|^{2\sigma}R=0.
\end{equation}
In the case $n=1$, see \eqref{exact}. Holmer and Roudenko \cite{Hol-R-1} demonstrated that if the initial data belongs to $H^1$ (not necessarily possessing finite variance or radial symmetry) and satisfies \eqref{hol-r}, then the corresponding solution exhibits one of two behaviors: it either blows up in finite time or it blows up over an infinite time   in the sense that there exists a sequence of times $t_n \rightarrow +\infty$ such that $\|\nabla u(t_n)\|_{L^2} \rightarrow \infty$. In \cite{dwz}, the authors  extended the findings of \cite{Hol-R-1} by establishing a blow-up criterion for \eqref{NLS-0} with initial data (without finite-variance and radial symmetry assumptions) in both the energy-critical and energy-supercritical regimes.
Although, due to presence nonlocal operator $D^\beta$, it is not easy to show a similar result for \eqref{nonlocalNLS}, one can formally show, using the optimal constant \eqref{bestc}, under the above technical assumptions that if either $E(u_0)<0$  or if  $E(u_0)\geq0$ and
 \[
       \norm{D ^{ -\frac\beta2 }(u_0)_x}_\lt^{\beta(\sigma+1)+\sigma-2}
      (F(u_0))^\frac{(1+\sigma)(1-\beta)+1}{2}
      >
      \norm{D ^{ -\frac\beta2 }Q_x}_\lt^{\beta(\sigma+1)+\sigma-2}
      (F(Q))^\frac{(1+\sigma)(1-\beta)+1}{2}
      \]
      and
      \[
          (E(u_0))^{\frac{\beta(\sigma+1)+\sigma-2}{2}}  (F(u_0))^\frac{(1+\sigma)(1-\beta)+1}{2}
      <
                   (E(Q))^{\frac{\beta(\sigma+1)+\sigma-2}{2}}(F(Q))^\frac{(1+\sigma)(1-\beta)+1}{2},
  \]
  where $Q$ is a ground state of \eqref{stand-3}.
 Then one
of the following statements holds:
\begin{itemize}
    \item $u(t)$ blows up in finite time, i.e. $T<+\infty$ and
    \[
    \lim_{t\uparrow T}\norm{D^{-\frac\beta2}u_x(t)}_\lt=+\infty;
    \]
    \item $u(t)$ blows up in infinite time  and
    \[
    \lim_{t\to \infty}\norm{D^{-\frac\beta2}u_x(t)}_\lt=+\infty;
    \]
\end{itemize}
\end{remark}
% \blue{Can we remove this explanation? Because I have mentioned at page 15\\}
% \blue{As an application of the above claims, if we take the perturbation $u_0=AQ$, the conditions mentioned in Remark  \ref{blow-up-theo} turn into
% \[
% E(AQ)=\frac{A^2}{2(\sigma+1)}\paar{\frac{\beta(\sigma+1)+\sigma}{2}-A^{2\sigma}}\|Q\|_{2\sigma+2}^{2\sigma+2}<0\longleftrightarrow
% A^{2\sigma}
% >
% \frac{\beta(\sigma+1)+\sigma}2.
% \]
% If $E(AQ)\geq0$, the conditions are $A^2>1$ and
% \[
% \paar{\frac{\beta(\sigma+1)+\sigma}{2}-A^{2\sigma}}\frac{2A^{ \frac{4\sigma}{\beta(\sigma+1)+\sigma-2}}}{\beta(\sigma+1)+\sigma-2}
% <1
%  \]}
 \subsection{Stability}\label{stab-sub}
 Studied the existence of ground states for \eqref{nonlocalNLS}, it is natural to investigate the dynamical behavior of such solutions. Since \eqref{nonlocalNLS} can be written in the form of Hamiltonian dynamical system
 \[
 \ii u_t=E'(u),
 \]
 we recall the following definition for the stability of such systems.
 \begin{definition}
We say that a set $\mathcal{J}\subset\nx$ is stable with respect to  the Cauchy problem associated with \eqref{nonlocalNLS} if for any
$\epsilon>0$ there exists some $\delta>0$ such that, for any $u_0\in B_\delta(\mathcal{J})$, the solution $u$ of \eqref{nonlocalNLS}  with $u(0) = u_0$ satisfies $u(t)\in B_\epsilon(\mathcal{J})$ for all $t > 0$, {where
\[
B_\delta(\mathcal{J})
=\sett{v\in X,\;\inf_{z\in\mathcal{J}}\|v-z\|_X<\delta}.
\]}
Otherwise, we say $\mathcal{J}$ is unstable.
\end{definition}

If $\lambda,\omega,\sigma,\zeta>0$ and $\beta=0$. It is well-known that a positive solution $\ff$ of \eqref{stand-3} is ground state, that is $\ff$ minimizes the energy. In \cite{caselions}, Cazenave and Lions proved that the standing wave solution $\ee^{-\ii\omega t}\ff(x)$ is stable when $\sigma<2$, while Berestycki and  Cazenave in\cite{berecase} showed that it is unstable if $\sigma\geq2$. Grillakis, Shatah, and Strauss developed an abstract theory and gave a necessary and sufficient conditions for the stability of stationary states of Hamiltonian systems under certain assumptions on the spectrum of the linearized operator associated with \eqref{ell} $\ee^{-\ii\omega t}\ff(x)$ is stable (resp. unstable) when the Lyapunov function $\mathbb{d}(\omega)=S(\ff_\omega)=E(\ff_\omega)+\frac{\omega}{2}F(\ff_\omega)$ is  strictly increasing (resp. decreasing). However, it seems difficult to check the spectral properties of the linearized operator of \eqref{stand-3}, by using the ideas of \cite{physicad2021,narwa2022,elm} and the stability theory developed in \cite{gss}, we can show a weak version of stability.

Let $\ngg_\omega$ be denoted the set of ground states of \eqref{stand-3}.
Take $\ff_\omega\in\ngg_\omega$ and define  the Lyapunov function $\mathbb{d}(\omega)=S(\ff_\omega)$. The above argument shows that $\mathbb{d} (\omega)$ depends only $\omega$. An immediate corollary (see \cite{caselions}) of the minimization $M_\omega$  and Theorem \ref{gs-thm} is the stability of $\ngg_\omega$. Since the proof of the following theorem is analogous to Theorem 4.1 and Corollary 5.1 in \cite{narwa2022}, we omit the details.

% If $\ngg_\omega$ is denoted the set of ground states of \eqref{stand-3}, then an immediate corollary (see \cite{caselions}) of the minimization $M_\omega$ is the stability of $\ngg_\omega$.

\begin{theorem}\label{stab-thm}
{Let $-1<\beta<2$.} The set $\ngg_\omega$ is stable if $\mathbb{d}''(\omega)>0$, and it is unstable if  $\mathbb{d}''(\omega)<0$.
\end{theorem}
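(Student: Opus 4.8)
The plan is to regard \eqref{nonlocalNLS} as the Hamiltonian system $\ii u_t=E'(u)$ and to invoke the abstract orbital-stability theory of Grillakis, Shatah and Strauss \cite{gss}, in the nonlocal form worked out in \cite{narwa2022}. The symmetry responsible for the standing waves is the gauge action $T(\theta)u=\ee^{\ii\theta}u$, whose conserved charge is the mass $F$; a ground state $\ff_\omega\in\ngg_\omega$ is exactly a critical point of the action $S_\omega=E+\tfrac\omega2 F$, and $\mathbb{d}(\omega)=S_\omega(\ff_\omega)$. Differentiating and using $S_\omega'(\ff_\omega)=0$ gives $\mathbb{d}'(\omega)=\tfrac12 F(\ff_\omega)$, so the hypothesis $\mathbb{d}''(\omega)>0$ is the Vakhitov--Kolokolov slope condition $\tfrac{\dd}{\dd\omega}F(\ff_\omega)>0$; this is the scalar quantity the theorem ties to (in)stability.

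First I would dispatch the two structural hypotheses of \cite{gss} that do not involve the spectrum. Local well-posedness of \eqref{nonlocalNLS} in $\nx$, together with conservation of $E$ and $F$, is available from \cite{amin} and, through the embedding Lemma \ref{embed}, in the admissible range of $(\sigma,\beta)$. The existence of a $C^1$ branch $\omega\mapsto\ff_\omega$ of ground states follows from Theorem \ref{gs-thm}, with differentiability in $\omega$ obtained from the implicit function theorem applied to $S_\omega'(\ff)=0$, once the linearization is known to be nondegenerate transverse to the symmetry orbit.

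The crux, and the principal obstacle, is the spectral hypothesis: the self-adjoint Hessian $S_\omega''(\ff_\omega)$ must have Morse index exactly one, with kernel equal to the tangent space of the group orbit, and with the rest of its spectrum bounded away from $0$. Because $\ff_\omega$ is real, the Hessian decouples on the real and imaginary parts of the perturbation into $L_+\oplus L_-$, where
\[
\scal{L_\pm v,v}=\int_\rr\Big(\lam\,|D^{-\frac\beta2}v_x|^2+\omega\,|D^{-\frac\beta2}v|^2-\zeta\,c_\pm\,|\ff_\omega|^{2\sigma}v^2\Big)\dd x,\qquad c_+=2\sigma+1,\ c_-=1.
\]
Equation \eqref{stand-3} gives $L_-\ff_\omega=0$ and $L_+\ff_\omega'=0$, so the kernels contain the gauge and translation directions; one must prove that $L_-\ge0$ with $\ker L_-=\mathrm{span}\{\ff_\omega\}$ and that $L_+$ has exactly one negative eigenvalue with $\ker L_+=\mathrm{span}\{\ff_\omega'\}$. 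For $\beta=0$ these are classical Sturm--Liouville facts resting on the positivity of the ground state, but for $\beta>0$ the operator $D^\beta$ is nonlocal and $\ff_\omega$ is sign-changing, so that route is closed. The way out --- and this is precisely what is transported from \cite[Theorem 4.1]{narwa2022} --- is to read the Morse index off the variational description of $\ff_\omega$ as a minimizer of $S_\omega$ on the codimension-one Nehari constraint $\sett{\scal{S_\omega'(u),u}=0}$ underlying $M_\omega$: the mountain-pass character of this constraint forces the index of $L_+$ to be exactly one, while minimality yields $L_-\ge0$, the spectral gap being supplied by the compactness in Lemma \ref{embed}.

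With this spectral picture in hand --- a single negative eigenvalue, kernel equal to the symmetry directions, and a gap --- the theorem of \cite{gss} reduces the (in)stability of the orbit $\ngg_\omega$ to comparing the negative count $n=1$ with the quantity $p$, equal to $1$ when $\mathbb{d}''(\omega)>0$ and to $0$ when $\mathbb{d}''(\omega)<0$. In the first case $n=p$ and $\ngg_\omega$ is orbitally stable; in the second $n-p=1$ is odd and $\ngg_\omega$ is unstable. That $M_\omega$, hence $\mathbb{d}(\omega)$, is independent of the chosen ground state (as noted above) is what lets the statement be phrased for the whole set $\ngg_\omega$, in agreement with Corollary 5.1 of \cite{narwa2022}; the only genuinely delicate point in the scheme is the spectral count of the preceding paragraph.
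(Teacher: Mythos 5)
Your frame---GSS theory for the gauge symmetry, $\mathbb{d}'(\omega)=\tfrac12 F(\ff_\omega)$, and the comparison of the negative count $n$ with $p$---is the classical route, but it is not the paper's route, and its central step does not go through as you claim. The paper explicitly declines to verify the spectral hypotheses of \cite{gss}: it states that checking the spectral properties of the linearized operator of \eqref{stand-3} seems difficult, and for exactly that reason invokes the \emph{weak}, set-valued stability machinery of \cite[Theorem 4.1 and Corollary 5.1]{narwa2022}. There, when $\mathbb{d}''(\omega)>0$ the ground states' minimizing characterization plus a compactness argument on minimizing sequences yields stability of the whole set $\ngg_\omega$, and when $\mathbb{d}''(\omega)<0$ instability follows from a Lyapunov-type functional built from the variational characterization---no Morse-index count, no kernel identification, no spectral gap is ever established. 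This is precisely why the theorem is phrased for the set $\ngg_\omega$ rather than for the orbit of a single wave. Note also that the paper obtains $\mathbb{d}''(\omega)$ from the explicit scaling $\ff_\omega(\cdot)=\omega^{\frac{2-\beta}{4\sigma}}\ff_1(\omega^{\frac12}\cdot)$, not from an implicit-function-theorem branch.

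The genuine gap in your proposal is the assertion that the full spectral picture---$L_+$ of Morse index exactly one with $\ker L_+=\mathrm{span}\sett{\ff_\omega'}$, $L_-\geq0$ with $\ker L_-=\mathrm{span}\sett{\ff_\omega}$, plus a gap---is ``transported from \cite[Theorem 4.1]{narwa2022}''. It is not there to transport. What Nehari/mountain-pass minimality actually yields is only the counting statement: the Hessian is nonnegative on a codimension-one subspace, hence $L_+\oplus L_-$ has index at most one, and since $\scal{L_+\ff_\omega,\ff_\omega}=-2\sigma\norm{\ff_\omega}_{L^{2\sigma+2}}^{2\sigma+2}<0$ one gets $\mathrm{ind}\,L_+=1$ and $L_-\geq0$. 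It yields neither kernel characterization nor a gap above zero; for the nonlocal operator $D^{\beta}$ with a sign-changing profile (the paper notes every solution with $\beta>0$ changes sign) the Sturm--Liouville and Perron--Frobenius tools are unavailable, and nondegeneracy of this type is a hard open problem---for fractional NLS it required the substantial Frank--Lenzmann machinery even for \emph{positive} ground states, which does not apply here. Your argument is moreover circular at the branch-construction step: the $C^1$ curve $\omega\mapsto\ff_\omega$ is produced by the implicit function theorem ``once the linearization is known to be nondegenerate'', i.e., assuming the very fact in question, and that smooth branch is then needed to make sense of $p$. Finally, Lemma \ref{embed} is a continuous Gagliardo--Nirenberg embedding, not a compact one, so it cannot ``supply'' a spectral gap. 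As written, your proof establishes the theorem only conditionally on an unproved nondegeneracy hypothesis---the obstruction the paper's set-stability argument was chosen specifically to avoid.
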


Since the nonlinearity of \eqref{nonlocalNLS} is of power type, so  the scaling $\ff_\omega(\cdot)=\omega^{\frac{2-\beta}{4p}}\ff_1(\omega^{\frac12}\cdot)$ provides a family of solutions of
\eqref{stand-3} for $\omega>0$ such that
\[
\mathbb{d}''(\omega)=\frac{((1-\beta)2\sigma+2-\beta)((2-\beta)(\sigma+1)-3\sigma)}{8(\sigma+1)}\omega^{\frac{(1-\beta)2\sigma+2-\beta}{2\sigma}-2}\|\ff_1\|_{L^{2\sigma+2}}^{2\sigma+2}>0
\]
if $\sigma<\frac{2-\beta}{1+\beta}$, while $\mathbb{d}''(\omega)<0$ if $\sigma>\frac{2-\beta}{1+\beta}$.

%%%%%%%%%%%%%%%%%%%%%%%%%%%%%%%%%%%%%%%%%%%%%%%%%%%%%%%%%%%%%%%%%%%%%%
\subsubsection{Numerical study of stability of standing wave solutions }
In this section, we study the stability of the  generated standing wave solutions of the nonlocal  NLS equation by using  the split-step Fourier method.
Our aim is to fill the gap given in the above theorems.
The main idea of the split-step method is to decompose the original problem into subproblems that are simpler than the original problem and then to compose the approximate solution of the original problem by using the exact or approximate solutions of the subproblems in a given sequential order. To solve the nonlocal NLS equation numerically, we rewrite equation \eqref{nonlocalNLS} in the form
\begin{equation}
   u_t=({\cal L}+{\cal N})u =- \ii \lambda u_{xx}- \ii \zeta D^{\beta} (| u |^{2\sigma} u), \label{evolution}
\end{equation}
where ${\cal L}$ and ${\cal N}$ are linear and nonlinear operators, respectively.
We take a  finite interval $(a,b)$ of big enough length and assume that $u(x,t)$ satisfies the periodic boundary condition
$u(a,t)=u(b,t)$  for $t\in [0,T]$.
The interval $[a,b]$ is divided into $N$ equal subintervals with spatial mesh size
$h=(b-a)/N$, where the positive integer $N$ is even. The spatial grid points are given by
$x_{j}=a+ j h$,  $j=0,1,2,...,N$. The time step
 is denoted by $\tau$ and $\tau = T/M$ for some $M\in \mathbb{Z}^+$. The approximate solution to
$u(x_{j},t_n)$ is denoted by $u_{j}^n$. Since we discretize \eqref{evolution} by the Fourier
 spectral method, $u_{j}^n$ and its Fourier transform satisfy the following relations:
\begin{equation*}
    \hat{U}_{k}^n={\cal F}_{k}[u_{j}^n]=
          \frac{1}{N}\sum_{j=0}^{N-1}u_{j}^n\exp(- \ii kx_{j}),
           ~~~~-\frac{N}{2} \le k \le \frac{N}{2}-1~  \label{dft}
\end{equation*}
and
\begin{equation*}
    u_{j}^n={\cal F}^{-1}_{j}[\hat{U}_{k}^n]=
          \sum_{k=-\frac{N}{2}}^{\frac{N}{2}-1}\hat{U}_{k}^n\exp( \ii kx_{j}),
          ~~~~j=0,1,2,...,N-1     ~.         \label{invdft}
\end{equation*}
Here $\cal F$ denotes the discrete Fourier transform and
${\cal F}^{-1}$ its inverse.  These transforms are  efficiently computed
using a fast Fourier transform (FFT) algorithm. By using a spectral approach, a Gaussian-like quadrature of the  fractional Fourier transform is also derived in \cite{campos}. Equation \eqref{evolution} can be split into the linear equation
\begin{equation}
   u_t={\cal L}u=- \ii \lambda u_{xx} \label{linear}
\end{equation}
and nonlinear equation
\begin{equation}
   u_t={\cal N}u=- \ii \zeta D^{\beta} (| u |^{2\sigma} u). \label{nonlinear}
\end{equation}
The linear equation  \eqref{linear} is solved using the
discrete Fourier transform and the advancements in time are performed according to
\begin{equation*}
    u_{j}^{n+1}={\cal F}^{-1}_{j}[\exp( \ii {\lambda}k^{2}\tau)
                    {\cal F}_{k}[u_{j}^{n}]]. \label{gnlst}
\end{equation*}
In the Fourier domain, equation \eqref{nonlinear} can be written as
\begin{equation}
   \hat{u}_t=- \ii \zeta ~|k|^{\beta} \widehat{(| u |^{2\sigma} u)}; \label{nonlinearFourier}
\end{equation}
which was solved numerically by the fourth-order Runge-Kutta scheme. In practical computation, from time $t = t^n$ to $t = t^{n+1}$, we apply the fourth-order  splitting steps via a standard Strang splitting  \cite{mclachlan1} as

\begin{equation*}
  u( t)=\varphi_{2}(\omega  t)
                           \varphi_{2}[ (1-2\omega)  t ]
                           \varphi_{2}(\omega  t),
     \label{sp4}
\end{equation*}
where
\begin{equation*}
 \varphi_{2}( t)=\exp\paar{{1 \over 2}t {\cal N}}
                           \exp \paar{t {\cal L}}
                           \exp\paar{{1 \over 2} t {\cal N} }
\end{equation*}
and $\omega=(2+2^{1/3}+2^{-1/3})/3$.  {As usual for explicit time discretization schemes, the numerical stability is conditional
and guaranteed only under the  Courant–Friedrichs–Lewy (CFL) condition.  We choose a very fine time step to satisfy the CFL condition for stability  as   $\tau < C h^{2}$  with $C$ a positive constant independent of $h$.
}

\begin{figure}[h]
 \begin{minipage}[t]{0.4\linewidth}
   \includegraphics[width=3in]{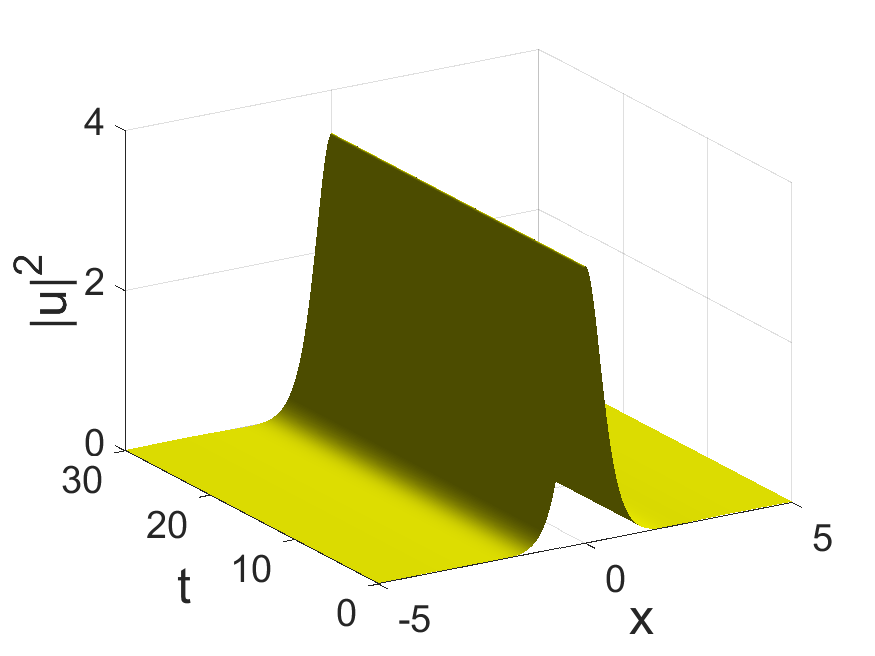}
 \end{minipage}
\hspace{30pt}
\begin{minipage}[t]{0.4\linewidth}
   \includegraphics[width=3in]{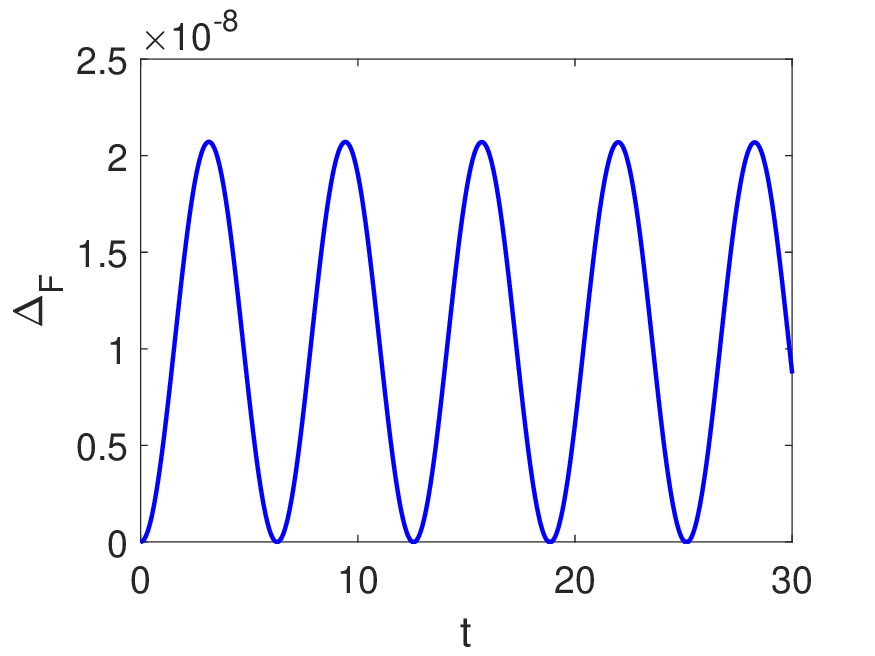}
 \end{minipage}

 \hspace{100pt}
\begin{minipage}[t]{0.4\linewidth}
   \includegraphics[width=3in]{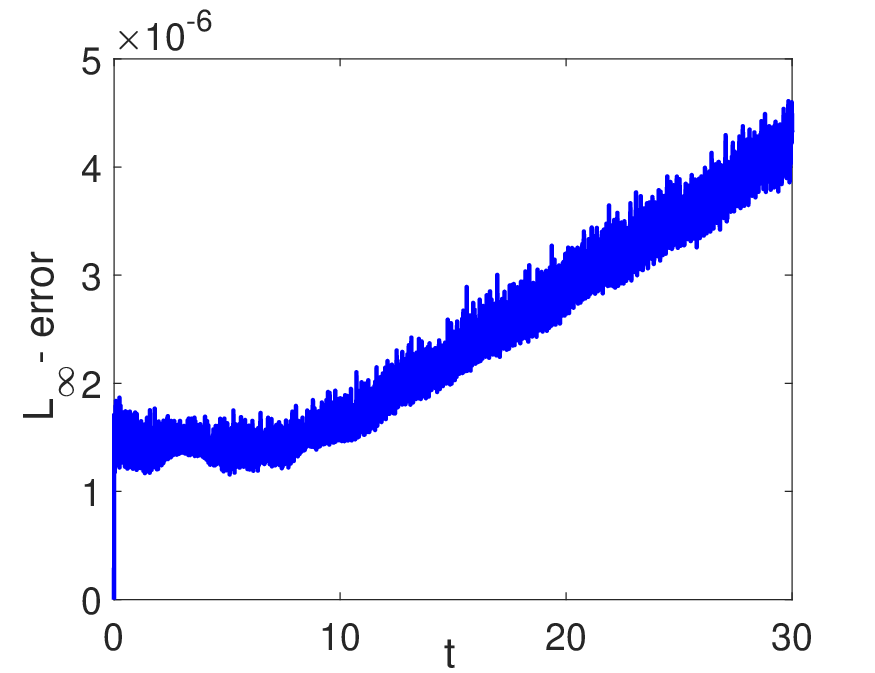}
 \end{minipage}

\caption{{The time evolution of the modulus squared of standing wave solution corresponding to the initial data $u=Q(x)$ with $\beta=0.4$ (top left panel), the variation of the change in the conserved quantity ${ F}$ with time (top right panel) and the variation of the $L_{\infty}$-error with time.}}
 \label{timeevolution-beta04}
\end{figure}

Now, we study the time evolution of the standing wave solution corresponding to the initial data $u=Q(x)$ with $\beta=0.4$ obtained by the Petviashvili's iteration method. The problem is solved in the space interval \mbox{$-1000 \leq x \leq 1000$} up to $T=30$. We take the number of grid points as $N=2^{18}$, $M=30000$. { The top left panel of Figure \ref{timeevolution-beta04} illustrates the time evolution of the modulus squared of the solution to the nonlocal NLS equation with $\beta=0.4$. The quantity
\begin{equation}
    \Delta_F= \abso{ \frac{{ F}(t)-{ F}(0)}{{ F}(0)}}
\end{equation}
indicates the accuracy of the numerical scheme. The top right panel of Figure \ref{timeevolution-beta04}  shows the variation of change in the conserved quantity ${\ F}$ with time. This numerical example shows that the proposed scheme preserves the conserved quantity.  The variation of the $L^{\infty}-$error between the numerical solution and the standing wave solution $u=e^{-it}Q(x)$ using the generated standing wave $Q(x)$ with time  is also illustrated in Figure \ref{timeevolution-beta04}.}
The numerical scheme captures the exact solution for $\beta=0.4$  well enough.

%\subsection{Stability of fractional ground states}
In order to explore the stability of standing wave solution, we first generate the standing wave solution by using Petviashvili's iteration method. Then, we perturb this profile by a factor $r > 0$. Finally, the perturbed profile is used as an initial condition as
\begin{equation*}
    u_0(x)=r Q(x)
\end{equation*}
for the time-splitting method and the evolution of the resulting numerical
approximation is monitored. To check the accuracy of our code, we force the mass conservation error $\Delta_F$ be less than $10^{-4}$ at each time step,
where the mass integral \eqref{mass} is approximated by the trapezoidal rule.  The problems are solved in the space interval $-4000 \leq x \leq 4000$ up to $T=30$ taking the number of grid points as $N=2^{18}, M=7500$ and $\sigma=1$.

\paragraph{Perturbed standing wave solutions in the mass subcritical regime}
We first investigate the time evolution of perturbed standing wave solutions in the mass subcritical case, i.e. $\beta < \frac{1}{2}$.  We choose the initial data \mbox{$u=0.9 Q(x)$} for $\beta=0.4$.  The variation of $\| u\|_{\nx}$  with time and time evolution of the modulus squared of perturbed standing wave solution corresponding to $\beta=0.4$ are  illustrated in Figure \ref{beta0409}.
The  $\nx$-norm of the solution decreases with time for $\beta=0.4$.

\begin{figure}[h!]
 \begin{minipage}[t]{0.45\linewidth}
   \includegraphics[width=3in]{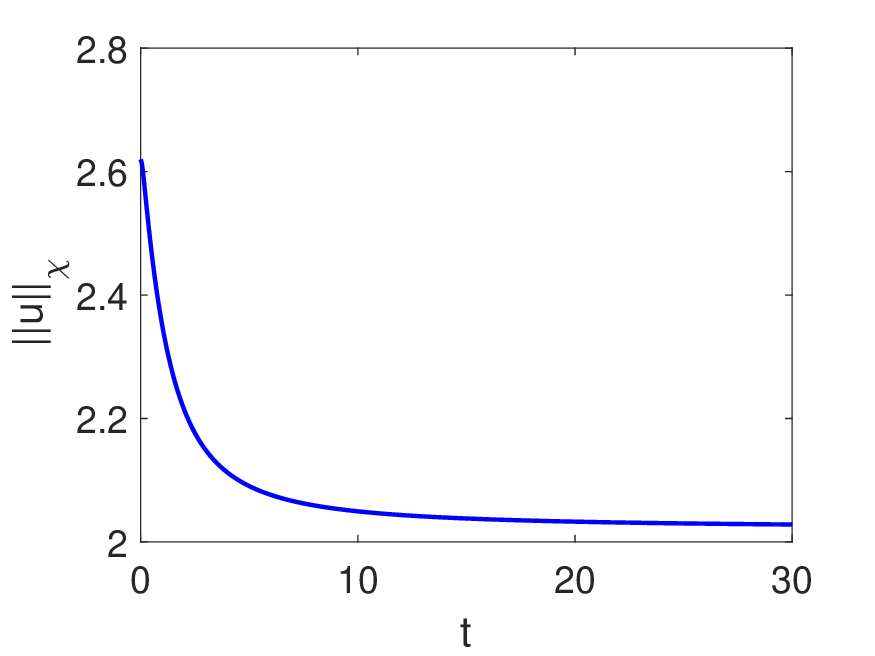}
 \end{minipage}
\hspace{30pt}
\begin{minipage}[t]{0.45\linewidth}
   \includegraphics[width=3in]{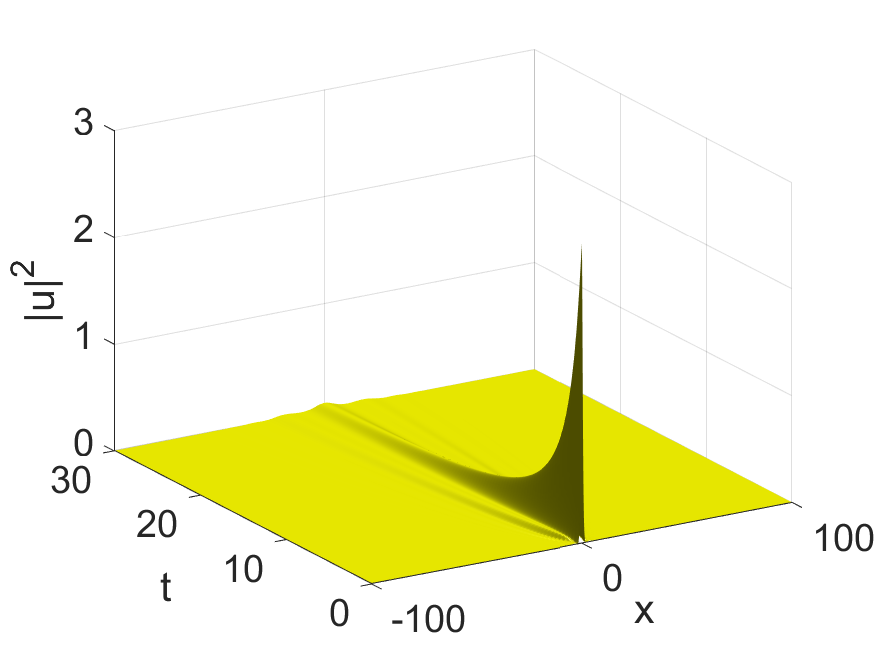}
 \end{minipage}
 \hspace{30pt}
\caption{ The variation of $\| u\|_{\nx}$  with time and time evolution of the modulus squared of perturbed standing wave  solution corresponding to the initial data $u=0.9 Q(x)$ for $\beta=0.4$. } \label{beta0409}
\end{figure}

%\begin{figure}[h!]
% \begin{minipage}[t]{0.45\linewidth}
%   \includegraphics[width=3.6in]{u09_Xnorm_betaminus04.eps}
% \end{minipage}
%\hspace{30pt}
%\begin{minipage}[t]{0.45\linewidth}
%   \includegraphics[width=3.6in]{u09_profile_betaminus04.eps}
% \end{minipage}
% \hspace{30pt}
%\caption{ The variation of $\| u\|_{\nx}$  with time and time evolution of the modulus squared of perturbed standing wave solution corresponding to the initial data $u=0.9Q(x)$ for $\beta=-0.4$. } %\label{betaminus0409}
%\end{figure}

The variation of $\| u\|_{\nx}$  with time and time evolution of the modulus squared of perturbed standing wave solution corresponding to the initial data $u=1.1 Q(x)$ for $\beta=0.4$ are presented in Figure \ref{beta0411}. We observe that the perturbed standing wave solutions oscillate. The amplitude of oscillations decreases as time increases.  Since the solution is bounded in ${\nx}$, the numerical result indicates that the standing wave is stable for the mass subcritical case. The numerical results are in complete agreement with the
theoretical  predictions given in   Theorem \ref{uniform} and {Theorem \ref{stab-thm}}.

\begin{figure}[h!]
 \begin{minipage}[t]{0.45\linewidth}
   \includegraphics[width=3in]{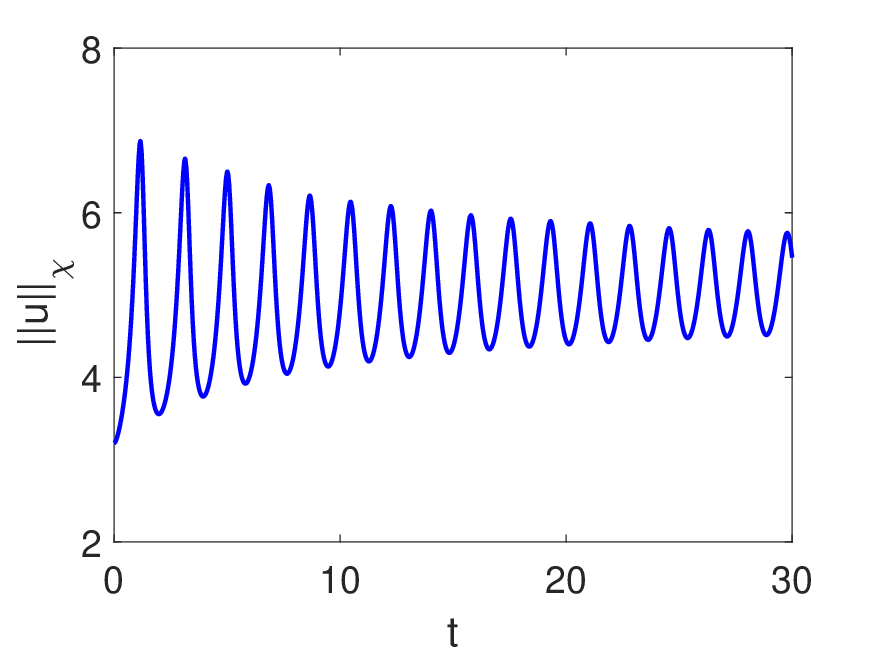}
 \end{minipage}
\hspace{30pt}
\begin{minipage}[t]{0.45\linewidth}
   \includegraphics[width=3in]{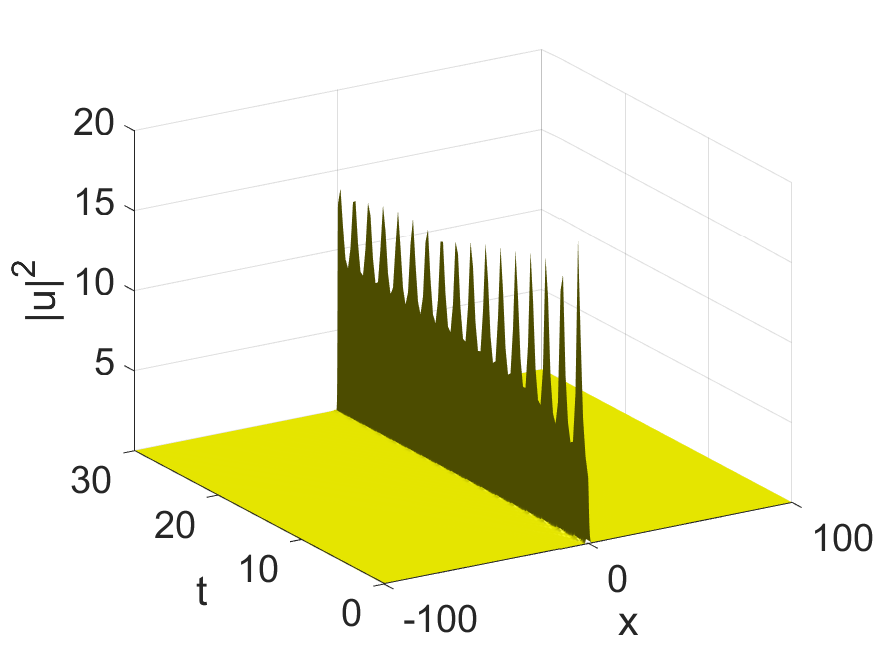}
 \end{minipage}
 \hspace{30pt}
\caption{ The variation of $\| u\|_{\nx}$  with time and time evolution of the modulus squared of perturbed standing wave solution corresponding to the initial data $u=1.1 Q(x)$ for $\beta=0.4$. } \label{beta0411}
\end{figure}

%\begin{figure}[h!]
% \begin{minipage}[t]{0.45\linewidth}
%   \includegraphics[width=3in]{u11_Xnorm_betaminus04.eps}
% \end{minipage}
%\hspace{30pt}
%\begin{minipage}[t]{0.45\linewidth}
%   \includegraphics[width=3in]{u11_profile_betaminus04.eps}
% \end{minipage}
% \hspace{30pt}
%\caption{ The variation of $\| u\|_{\nx}$  with time and time evolution of the modulus squared of perturbed standing wave solution corresponding to the initial data $u=1.1 Q(x)$ for $\beta=-0.4$. } %\label{betaminus0411}
%\end{figure}

\paragraph{Perturbed standing wave solutions in the mass critical regime}
We begin with initial data having a mass smaller than the standing wave solution, specifically $u=0.9 Q(x)$ for $\beta=0.5$. Figure \ref{beta0509} illustrates the $\nx$ norm of the solution
and time evolution of the modulus squared of perturbed standing wave solution. The amplitude of the solution decreases over time, indicating that the solution remains uniformly bounded in $\nx$. Since $F(u_0)=r^2 F(Q)$ and $\lambda=1$, the condition stated in case (ii) of Theorem \ref{uniform} becomes $r^2<1$. This numerical finding aligns completely with the theoretical result provided in the theorem. In the scenario where $r^2>1$ and $\sigma=1, \beta=0.5$, no theoretical result is available. Both the conditions for Theorem \ref{uniform} and Remark \ref{blow-up-theo} are not met. Figure \ref{beta0511} demonstrates the $\nx$ norm of the solution increasing over time. {The solution seems to exhibit finite-time blow-up. The numerical result indicates that the standing wave is unstable, filling the gap identified in Theorem \ref{stab-thm}.}

\begin{figure}[h!]
 \begin{minipage}[t]{0.45\linewidth}
   \includegraphics[width=3.6in]{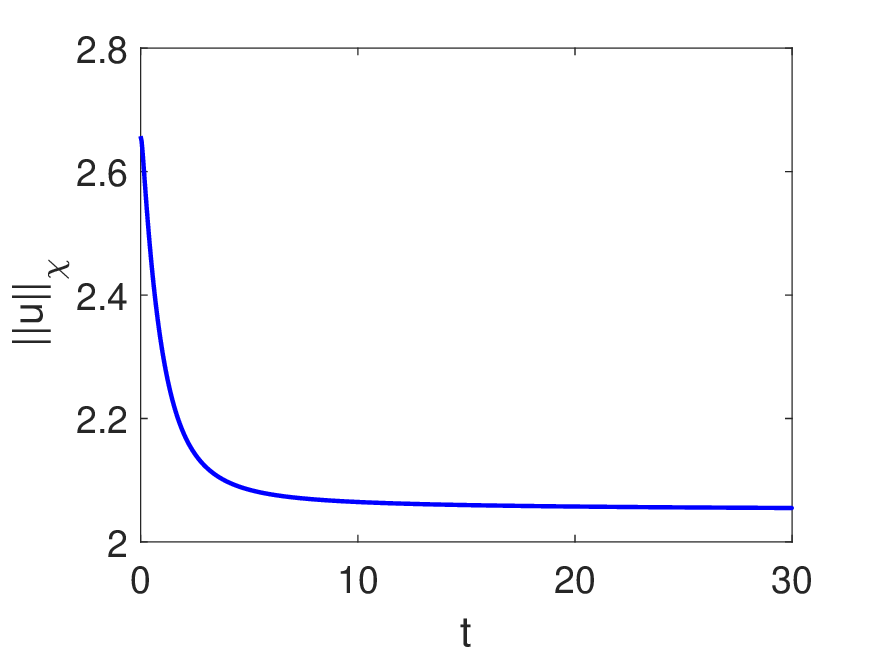}
 \end{minipage}
\hspace{30pt}
\begin{minipage}[t]{0.45\linewidth}
   \includegraphics[width=3.6in]{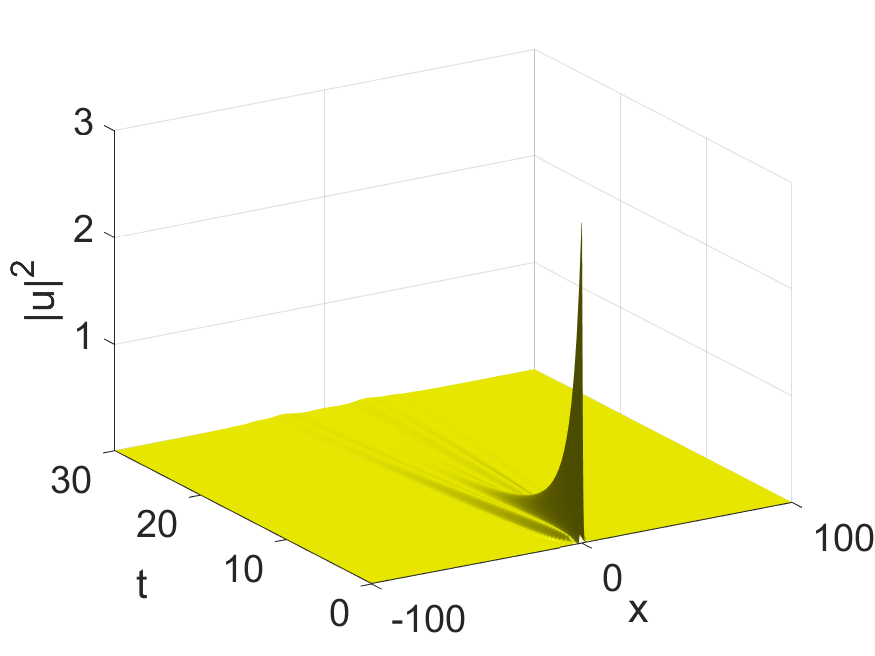}
 \end{minipage}
 \hspace{30pt}
\caption{ The variation of $\| u\|_{\nx}$  with time and time evolution of the modulus squared of perturbed standing wave solution corresponding to the initial data $u=0.9Q(x)$ for $\beta=0.5$. } \label{beta0509}
\end{figure}
\begin{figure}[h!]
 \begin{minipage}[t]{0.45\linewidth}
   \includegraphics[width=3.6in]{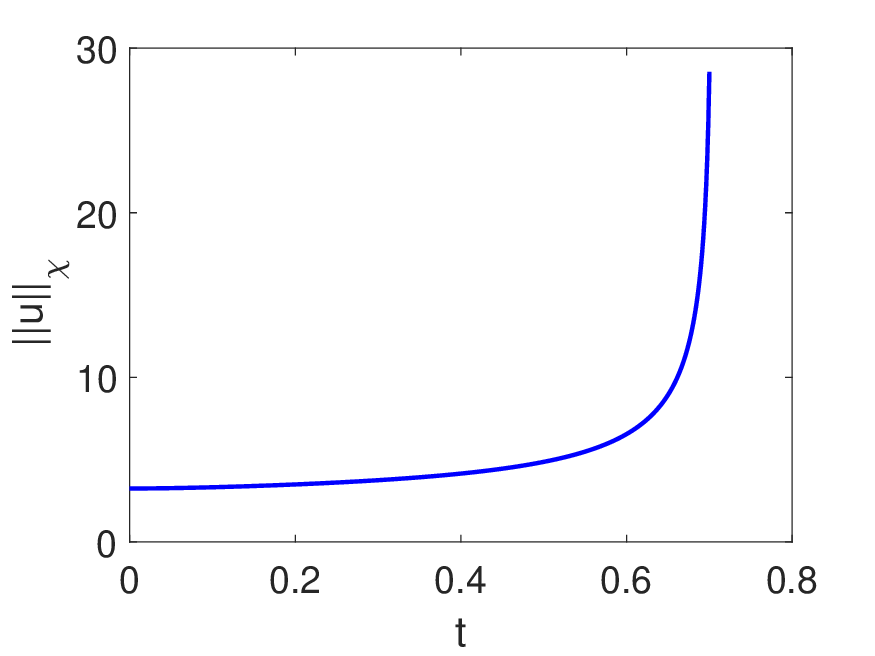}
 \end{minipage}
\hspace{30pt}
\begin{minipage}[t]{0.45\linewidth}
   \includegraphics[width=3.6in]{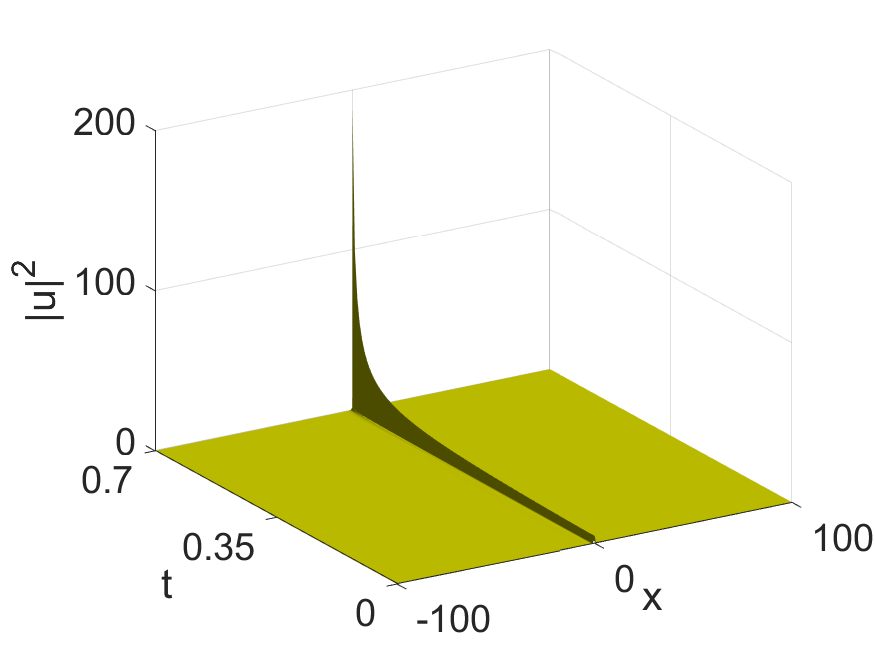}
 \end{minipage}
 \hspace{30pt}
\caption{ The variation of $\| u\|_{\nx}$  with time and time evolution of the modulus squared of perturbed standing wave solution corresponding to the initial data $u=1.1Q(x)$ for $\beta=0.5$. } \label{beta0511}
\end{figure}
\paragraph{Perturbed standing wave solutions in the mass supercritical regime}
We now investigate the time evolution of the perturbed standing wave  solution in the mass supercritical case, where $\beta > \frac{1}{2}$. Figure \ref{beta0809} illustrates the $\nx$ norm of the solution and its temporal evolution with initial data $u=0.9 Q(x)$ for $\beta=0.8$. The amplitude of the solution decreases over time, indicating that the solution remains uniformly bounded in $\nx$.
{Identities \eqref{poho-1}
give
\[
E(Q)=
\frac{\beta(\sigma+1)+\sigma-2}{4(\sigma+1)}
\norm{Q}_{L^{2\sigma+2}}^{2\sigma+2}=
\frac{\beta(\sigma+1)+\sigma-2}{2(\beta(\sigma+1)+\sigma)}
\norm{D^{-\frac\beta2}Q_x}_\lt^2
\]}
and
\begin{equation}
E(rQ)=
 \paar{\frac{\beta(\sigma+1)+\sigma}{2}-r^{2\sigma}}\frac{2r^2}{\beta(\sigma+1)+\sigma-2}E(Q),
\end{equation}
hence part (iii) of Theorem \ref{uniform} yields  $r^{2\sigma}<1$ and
\begin{equation}
    \paar{\frac{\beta(\sigma+1)+\sigma}{2}-r^{2\sigma}}\frac{2r^{ \frac{4\sigma}{\beta(\sigma+1)+\sigma-2}}}{\beta(\sigma+1)+\sigma-2}
<1.  \label{cond2}
\end{equation}
For $r=0.9$, $\beta=0.8$, and $\sigma=1$, the left-hand side of \eqref{cond2} is approximately $0.8$, satisfying the conditions of Theorem \ref{uniform}. The numerical results align well with the analytical findings. The variation of $\| u\|_{\nx}$ over time and the time evolution of the modulus squared of the perturbed standing wave solution corresponding to the initial data $u=1.1Q(x)$ for $\beta=0.8$ are depicted in Figure \ref{beta0811}. Both the $\nx$-norm of the solution and the amplitude of the solution increase over time.  If we take $u_0=rQ$, the conditions of Remark \ref{blow-up-theo} turn into
\[
E(rQ)=\frac{r^2}{2(\sigma+1)}\paar{\frac{\beta(\sigma+1)+\sigma}{2}-r^{2\sigma}}\|Q\|_{2\sigma+2}^{2\sigma+2}<0\iff
r^{2\sigma}
>
\frac{\beta(\sigma+1)+\sigma}2.
\]
If $E(rQ)\geq0$,   conditions of Remark \ref{blow-up-theo} yield  $r^2>1$ and the inequality given by \eqref{cond2}.
% \begin{equation}
% \paar{\frac{\beta(\sigma+1)+\sigma}{2}-r^{2\sigma}}\frac{2r^{ \frac{4\sigma}{\beta(\sigma+1)+\sigma-2}}}{\beta(\sigma+1)+\sigma-2}
% <1    \label{cond2}
% \end{equation}
For $r=1.1$, $\beta=0.8$ and $\sigma=1$, $r^{2\sigma}< \frac{\beta(\sigma+1)+\sigma}2$. In this case, there is no analytical result. The solution blows up in finite time, and the numerical result indicates that the standing wave  is unstable.

\begin{figure}[h!]
 \begin{minipage}[t]{0.45\linewidth}
   \includegraphics[width=3.6in]{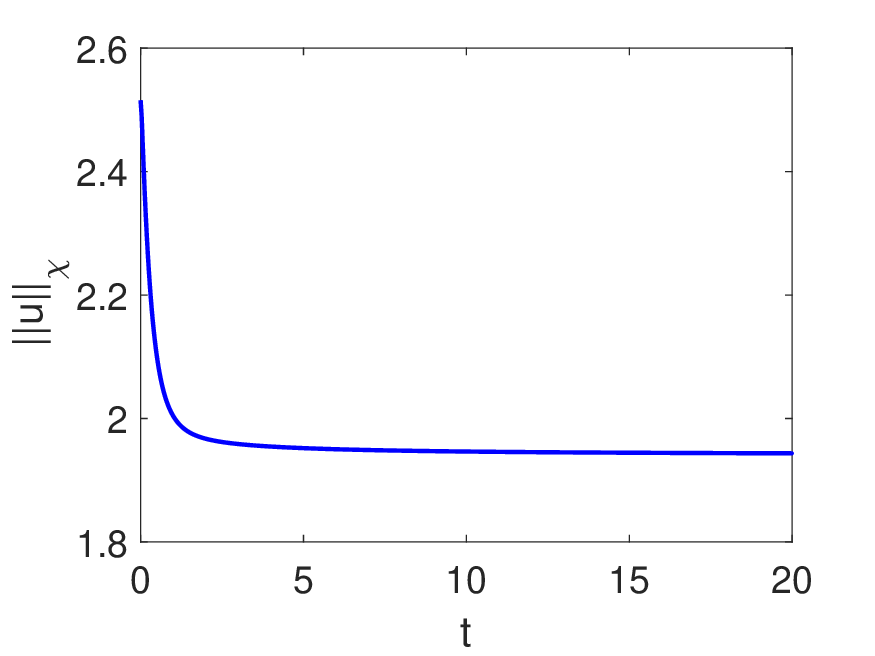}
 \end{minipage}
\hspace{30pt}
\begin{minipage}[t]{0.45\linewidth}
   \includegraphics[width=3.6in]{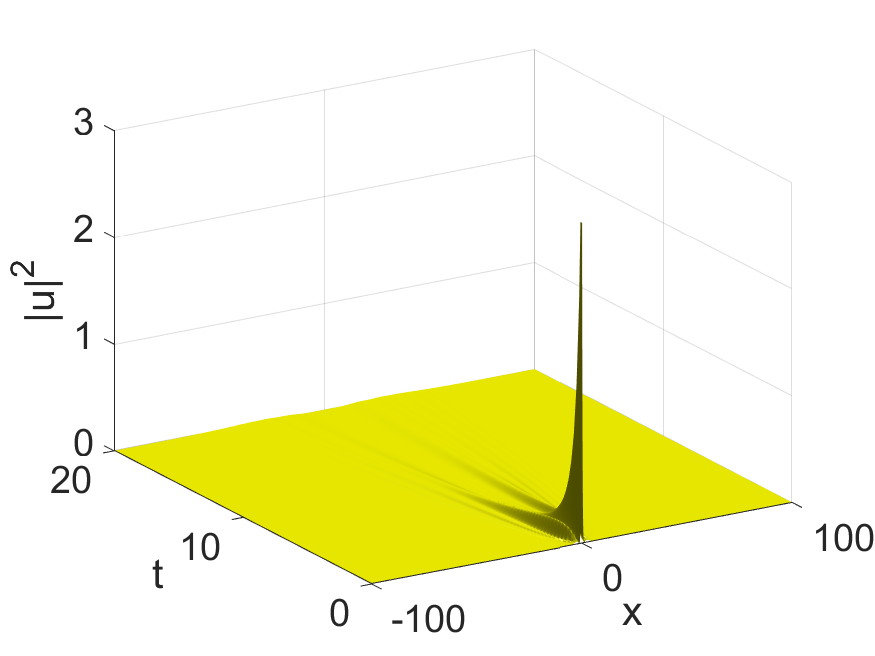}
 \end{minipage}
 \hspace{30pt}
\caption{ The time variation of $\| u(t)\|_{\nx}$  and time evolution of the modulus squared of perturbed standing wave solution corresponding to the initial data $u=0.9Q(x)$ for $\beta=0.8$. } \label{beta0809}
\end{figure}
\begin{figure}[h!]
 \begin{minipage}[t]{0.45\linewidth}
   \includegraphics[width=3.6in]{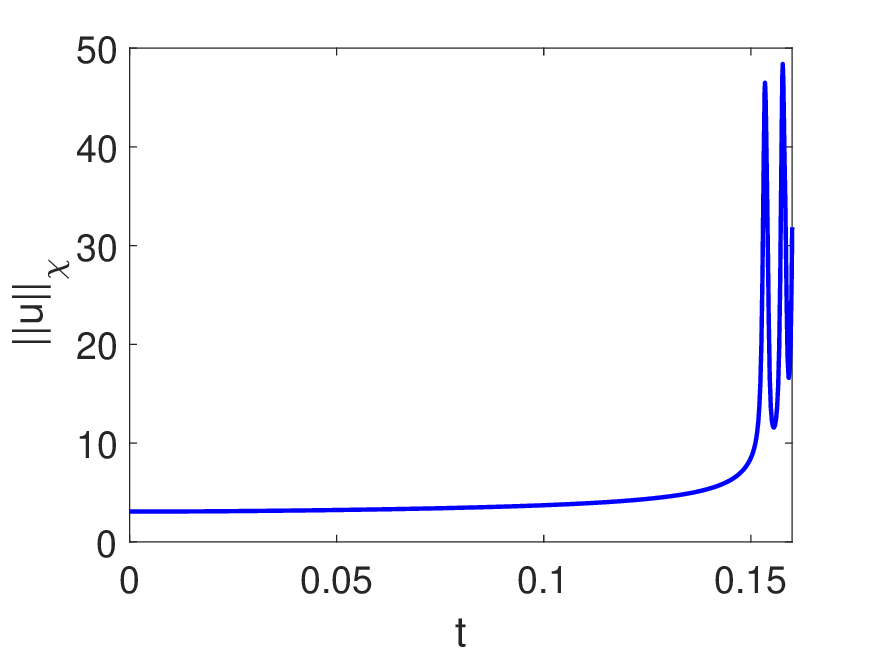}
 \end{minipage}
\hspace{30pt}
\begin{minipage}[t]{0.45\linewidth}
   \includegraphics[width=3.6in]{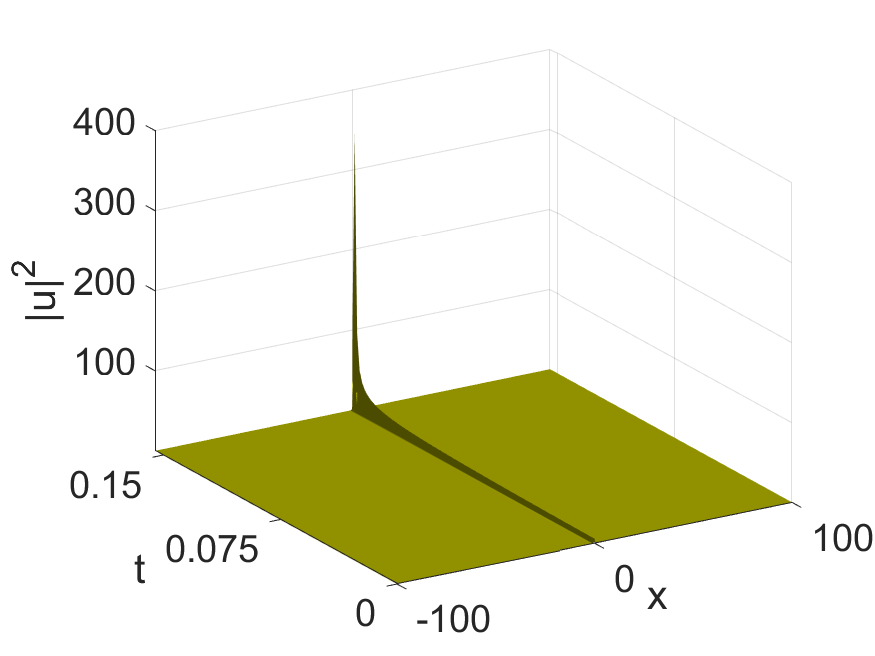}
 \end{minipage}
 \hspace{30pt}
\caption{ The variation of $\| u\|_{\nx}$  with time and time evolution of the modulus squared of perturbed standing wave solution corresponding to the initial data $u=1.1Q(x)$ for $\beta=0.8$. } \label{beta0811}
\end{figure}

\section{Boosted standing waves}\label{Boosted standing waves}

In this section, we study the existence and dynamical properties of boosted standing waves of \eqref{nonlocalNLS}. Such solutions take the form
  $u(x,t)=\ee^{-\ii\omega t}\ff(x-ct)$, where $\omega$ is the wave frequency, and $c$ is the velocity of the wave, and $\ff$ is a complex-valued function. By plugging this expression into \eqref{nonlocalNLS} and replacing $x-ct$ with $x$, we obtain the equation
\begin{equation}\label{stand-nonzero-vel}
  \omega \varphi-\ii c\ff'-\lambda\varphi^{\prime\prime}=\zeta D^{\beta} ( |\varphi|^{2\sigma} \varphi). %,\qquad'=\frac\dd{\dd x}
\end{equation}
  In the case $\beta=0$, it is known that \eqref{NLS-0} is Galilei invariant. Therefore,      if $u(x,t)=\ee^{-\ii t}Q(x-ct)$ satisfies \eqref{NLS-0}, then
\begin{equation}
    \ee^{\ii\paar{\frac{c^2}{4\lam}-1}t-\frac{c}{2\lam}\ii x}Q(x-ct)
    %\ee^{\ii\paar{\frac{c^2}{4\lam}-\omega}t-\frac{c}{2\lam}\ii x}Q(x-ct)
\end{equation}
also satisfies it, where $Q$ is the solution of \eqref{exact}.

Due to presence of the nonlocal operator $D^\beta$, such a Galilei transformation seems not to exist, so the qualitative properties of boosted standing waves is unpredictable.

It is clear that if $\ff\in\nx$ satisfies \eqref{stand-nonzero-vel}, then the following Pohozaev identity holds:
\begin{equation} \label{pohi}
\omega F(\ff)-\ii c\int_\rr D^{-\frac\beta2}\ff_x \overline{D^{-\frac\beta2}\ff} \dd x+\lam
\norm{D^{1-\frac\beta2}\ff}_\lt^2
=\zeta\|\ff\|_{L^{2\sigma+2}}^{2\sigma+2}.
 \end{equation}
Hence, \eqref{stand-nonzero-vel} has no nontrivial solution when $\zeta=-1$. In the case $\zeta=+1$,   the above identity shows that $c$ and $\omega$ should satisfy $$\frac{c^2}{4\lambda}-\omega<0.$$

Contrary to the case \eqref{stand-3}, the presence of $\ff'$ in \eqref{stand-nonzero-vel} does not allow   to use any scaling, so the arguments used in the proof of Theorem \ref{gs-thm} are not applicable.
So, here we resort to using the following Weinstein-type minimization problem
 \begin{equation}\label{wein}
W_{c}=\inf_{u\in \nx\setminus\{0\}}M_{c}(u)=\inf_{u\in \nx\setminus\{0\}}\frac{\paar{ \norm{D^{-\frac\beta2}u'}_{\dot{\nx}}^2-\ii c \scal{D^{-\frac\beta2}u,D^{-\frac\beta2}u'}+\omega\|u\|_\lt^2}^{\sigma+1}}
{  \|u\|_{L^{2\sigma+2}}^{2\sigma+2}  }.
 \end{equation}
It is clear, from Lemma \ref{gn}, that $ W_c>0$. Moreover, if $u$ is a nontrivial minimizer of \eqref{wein}, then a scaled function of $u$ satisfies \eqref{stand-nonzero-vel}.
\begin{theorem}
     Let $\lambda,\omega,\sigma>0$ and $-1<\beta<2$.  Assume that
\[
\max\sett{0,\frac{-\beta}{1+\beta} }< \sigma<  \begin{cases}
     \frac{2-\beta }{\beta-1},&\beta>1,\\
    \infty,&\beta\leq1,
\end{cases}
\]
and $\omega>\frac{c^2}{4\lambda}$, then there exists a minimizer $\ff\in \nx\setminus\{0\}$ of \eqref{wein} which satisfies (after a scaling) \eqref{stand-nonzero-vel}.
\end{theorem}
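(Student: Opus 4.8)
The plan is to show that $W_c$ is attained via a concentration--compactness argument in which translation invariance on $\rr$ is the only source of noncompactness, and then to recover \eqref{stand-nonzero-vel} from the Euler--Lagrange equation after a scaling. Write $\mathcal{Q}(u)$ for the quadratic form in the numerator of \eqref{wein}. Setting $\hat w(k)=\abso{k}^{-\frac\beta2}\hat u(k)$, so that $F(u)=\norm{w}_\lt^2$ and $\norm{D^{1-\frac\beta2}u}_\lt^2=\norm{w'}_\lt^2$, a direct computation in Fourier variables gives
\[
\mathcal{Q}(u)=\int_\rr\paar{\lambda k^2+ck+\omega}\,\abso{\hat w(k)}^2\dd k .
\]
The symbol $\lambda k^2+ck+\omega$ has discriminant $c^2-4\lambda\omega$, which is negative precisely under the hypothesis $\omega>\frac{c^2}{4\lambda}$; hence it is bounded below by $\omega-\frac{c^2}{4\lambda}>0$ and is comparable to $1+k^2$, so $\mathcal{Q}(u)\simeq\norm{u}_\nx^2$. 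In particular $\mathcal{Q}$ is a positive, coercive and weakly lower semicontinuous quadratic form on $\nx$, and the continuous embedding of Lemma \ref{embed} (valid with $q=\sigma$, which is admissible exactly in the stated range of $\sigma$) gives $\norm{u}_{L^{2\sigma+2}}^{2\sigma+2}\lesssim\norm{u}_\nx^{2\sigma+2}\simeq\mathcal{Q}(u)^{\sigma+1}$, so that $W_c>0$.

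Since $M_c$ is invariant under $u\mapsto\mu u$ and under translations, I would minimize $\mathcal{Q}$ over $\sett{u\in\nx:\norm{u}_{L^{2\sigma+2}}^{2\sigma+2}=1}$, for which $W_c^{\frac1{\sigma+1}}=\inf\mathcal{Q}$. Any minimizing sequence $\sett{u_n}$ is then bounded in $\nx$ by coercivity. The normalization $\norm{u_n}_{L^{2\sigma+2}}=1$ rules out Lions vanishing, so there exist $y_n\in\rr$ and $R,\delta>0$ with $\int_{\abso{x-y_n}<R}\abso{u_n}^{2\sigma+2}\dd x\geq\delta$. Translating, $v_n:=u_n(\cdot-y_n)$ is still minimizing (both $\mathcal{Q}$ and the constraint are translation invariant), and along a subsequence $v_n\rightharpoonup v$ in $\nx$; the subcritical local compactness $\nx\hookrightarrow\hookrightarrow L^{2\sigma+2}_{\mathrm{loc}}$ (here the \emph{strictness} of the upper bound on $\sigma$ keeps $2\sigma+2$ below the critical exponent $q^\ast$) forces $v\neq0$.

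To upgrade weak to strong convergence I would use the Brezis--Lieb lemma in place of the usual dichotomy discussion. Writing $r_n=v_n-v\rightharpoonup0$, weak convergence annihilates the bilinear cross terms, so $\mathcal{Q}(v_n)=\mathcal{Q}(v)+\mathcal{Q}(r_n)+o(1)$, while Brezis--Lieb gives $1=\norm{v}_{L^{2\sigma+2}}^{2\sigma+2}+\norm{r_n}_{L^{2\sigma+2}}^{2\sigma+2}+o(1)$. Applying the sharp inequality $\mathcal{Q}(u)\geq W_c^{\frac1{\sigma+1}}\norm{u}_{L^{2\sigma+2}}^{2}$ to both $v$ and $r_n$ and setting $a=\norm{v}_{L^{2\sigma+2}}^{2\sigma+2}\in(0,1]$, $b=\lim_n\norm{r_n}_{L^{2\sigma+2}}^{2\sigma+2}$ with $a+b=1$, I obtain $a^{\frac1{\sigma+1}}+b^{\frac1{\sigma+1}}\leq1$. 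Since $\sigma>0$, the map $t\mapsto t^{\frac1{\sigma+1}}$ is strictly concave, hence $a^{\frac1{\sigma+1}}+b^{\frac1{\sigma+1}}\geq(a+b)^{\frac1{\sigma+1}}=1$ with equality only when $a$ or $b$ vanishes; as $a>0$ this forces $b=0$. Thus $v_n\to v$ in $L^{2\sigma+2}$, $\norm{v}_{L^{2\sigma+2}}=1$, and weak lower semicontinuity gives $\mathcal{Q}(v)\leq W_c^{\frac1{\sigma+1}}$, so $v$ realizes $W_c$. Finally, the Euler--Lagrange equation reads $\mathcal{L}v=\Lambda\abso{v}^{2\sigma}v$, where $\mathcal{L}$ is the Fourier multiplier with symbol $(\lambda k^2+ck+\omega)\abso{k}^{-\beta}$ and $\Lambda=\mathcal{Q}(v)\norm{v}_{L^{2\sigma+2}}^{-(2\sigma+2)}>0$; applying $D^\beta$ turns this into $\omega v-\ii c v'-\lambda v''=\Lambda D^\beta(\abso{v}^{2\sigma}v)$, and the rescaling $\ff=\Lambda^{-\frac1{2\sigma}}v$ satisfies \eqref{stand-nonzero-vel}.

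The hard part is the compactness step. Because \eqref{stand-nonzero-vel} is posed on all of $\rr$ and both $\mathcal{Q}$ and $\norm{\cdot}_{L^{2\sigma+2}}$ are translation invariant, bounded minimizing sequences need not converge, and the weight $\abso{k}^{-\beta}$ entering $F$ makes the low-frequency analysis in $\nx$ nonstandard; the non-vanishing/local-compactness claim and the verification that $2\sigma+2$ is strictly subcritical (so the local embedding is genuinely compact) are the delicate points, and they are exactly where the hypotheses on $\sigma$ and on $\omega>\frac{c^2}{4\lambda}$ are used. By contrast, the algebraic concavity step replaces the explicit exclusion of dichotomy and is entirely routine.
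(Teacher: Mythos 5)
Your argument is essentially the paper's own: the authors prove this theorem simply by deferring to Theorem 2.1 of \cite{bfv}, whose concentration--compactness scheme is exactly what you reconstruct --- coercivity and weak lower semicontinuity of the translation-invariant Hilbertian form $\mathcal{Q}$ under $\omega>\frac{c^2}{4\lambda}$, non-vanishing up to translations, Hilbertian splitting plus Brezis--Lieb plus strict superadditivity of $t\mapsto t^{1/(\sigma+1)}$, and finally the Euler--Lagrange equation with a rescaling (with the delicate vanishing/local-compactness steps, which you flag but do not execute, being precisely the content of the cited result in the space $\nx=\dot H^{-\beta/2}\cap\dot H^{1-\beta/2}$). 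Two cosmetic corrections: the numerator of \eqref{wein} should be read with $\omega F(u)$ in place of $\omega\|u\|_{L^2}^2$, as you implicitly did, and the final rescaling must be $\ff=\Lambda^{1/(2\sigma)}v$ rather than $\Lambda^{-1/(2\sigma)}v$ so that the Lagrange multiplier is absorbed.
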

The proof of this theorem is aligned with the lines of one of Theorem 2.1 in \cite{bfv}, so we omit the details.

\subsection{Numerical generation of boosted standing waves}
Since the exact solutions for boosted standing waves are unknown for $\beta \neq 0$, we generate a two-parameter family of solutions numerically.  Applying the Fourier transform
to equation \eqref{stand-nonzero-vel} gives
\begin{equation}
  (\omega+ck+\lambda k^2) \widehat{{\varphi}}=\zeta |k|^{\beta} \widehat{ (|\varphi|^{2\sigma} \varphi)}.  \label{standingwave-nonzero}
\end{equation}
 The Petviashvili method  for equation \eqref{standingwave-nonzero} is given by
\begin{equation}
\widehat{\varphi}_{n+1}(k) =\zeta( M_n)^{\nu}  \frac{|k|^{\beta} \widehat{ (|\varphi|^{2\sigma} \varphi})}{\omega+c k+\lambda k^2}
\label{petviashvilischemenonzero}
\end{equation}
with
\begin{equation*}
  M_n=\frac{\int_{\mathbb{R}} [(\omega+ck+\lambda k^2) [\widehat{{\varphi}}_{n}(k)]^2 dk }
  {\zeta \int_{\mathbb{R}}  { |k|^{\beta} \widehat{ (|\varphi|^{2\sigma} \varphi)} \widehat{{\varphi}}_{n}(k) dk }}.
\end{equation*}

\begin{figure}[h!]
 \begin{minipage}[t]{0.45\linewidth}
   \includegraphics[width=3in]{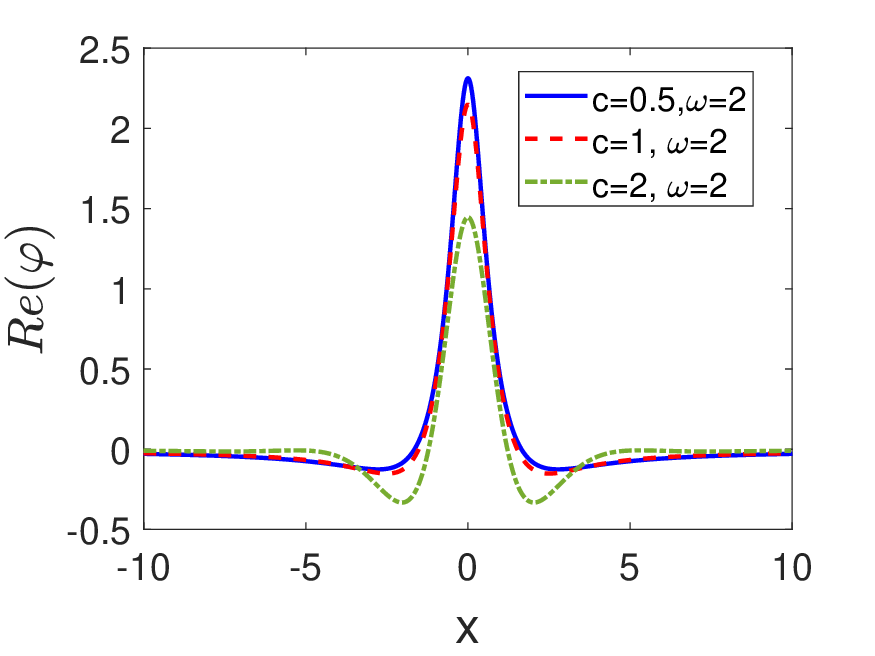}
 \end{minipage}
\hspace{30pt}
\begin{minipage}[t]{0.45\linewidth}
   \includegraphics[width=3in]{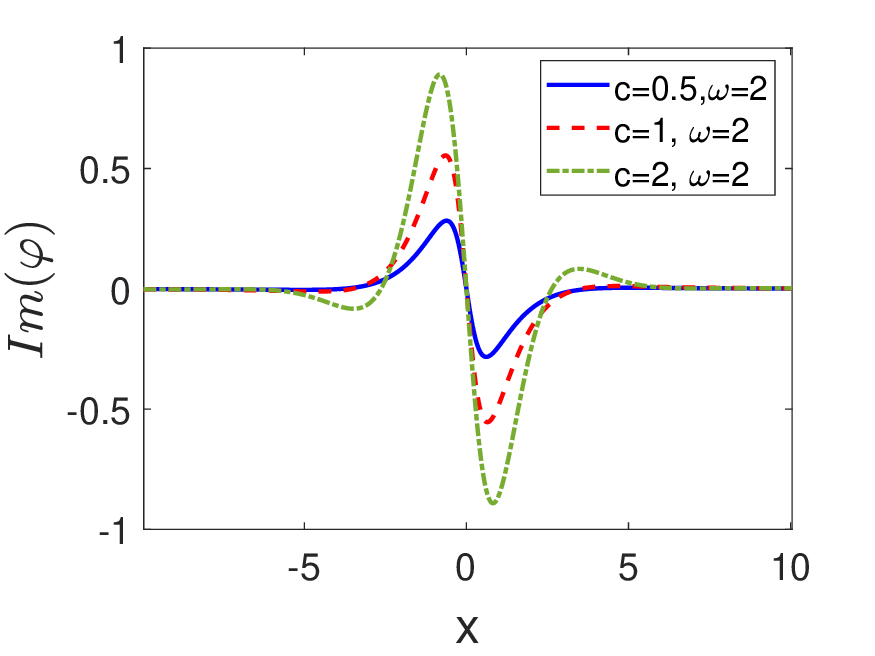}
 \end{minipage}
 \caption{ The real and imaginary part of the profiles of the solutions for a fixed value of $\omega=2$ and various values of $c=0.5,1,2$ with $\beta=0.3, \sigma=1$.} \label{boosted_sw_wfixed}
 \end{figure}
 \begin{figure}[h!]
 \begin{minipage}[t]{0.45\linewidth}
   \includegraphics[width=3in]{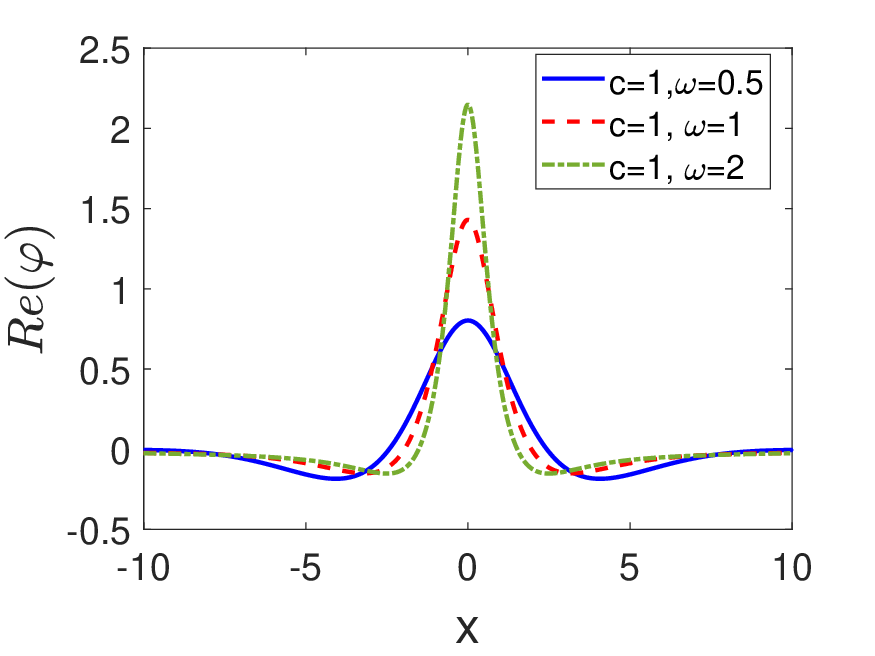}
 \end{minipage}
 \hspace{30pt}
\begin{minipage}[t]{0.45\linewidth}
   \includegraphics[width=3in]{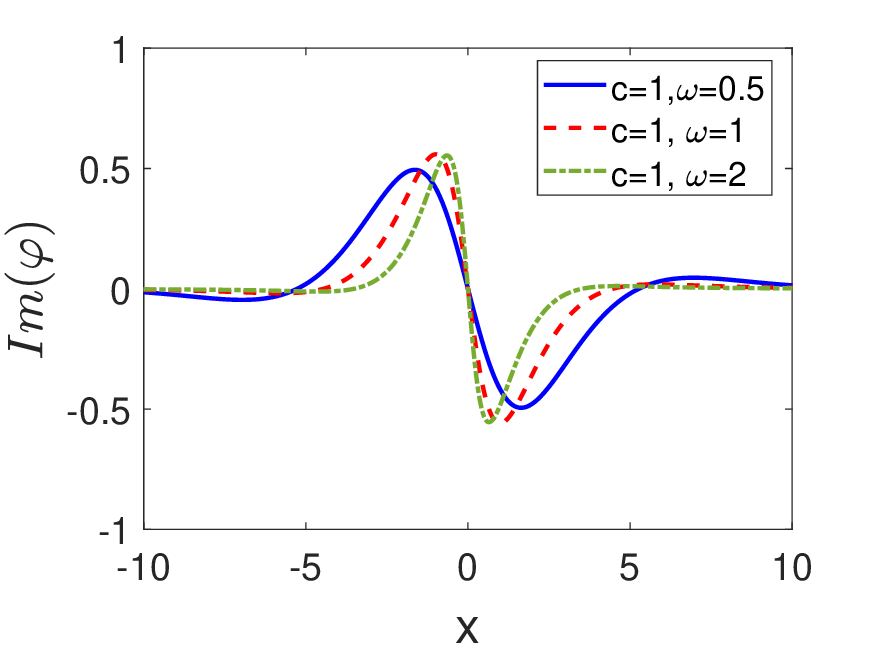}
 \end{minipage}
\caption{The real and imaginary part of the profiles of the solutions for a fixed value of $c=1$ and various values of $\omega=0.5,1,2$ with $\beta=0.3, \sigma=1$.} \label{boosted_sw_cfixed}
\end{figure}
Now, we  generate boosted standing waves numerically for several values of $c$ and $w$.  Here, we  are interested in the focusing case ($\lambda=1, \zeta=1$) choosing
  the space interval as  \mbox{$x \in [-2^{15},2^{15}] $} and taking the number of grid points as $N=2^{21}$. Figure \ref{boosted_sw_wfixed} shows the real and imaginary part of the profiles of the solutions for a fixed value of $\omega$ and various values of $c$ for $\beta=0.3, \sigma=1$. The amplitude of the real part of the solution decreases when $c$ increases. The amplitude of imaginary part of the solution increases when $c$ increases. The real and imaginary part of profiles for a fixed value of $c$ and several values of $\omega$ for $\beta=0.3, \sigma=1$  are illustrated in Figure \ref{boosted_sw_cfixed}. As it is seen from the figure, the amplitude of the real part of the solution increases when $\omega$ increases. {The amplitude of imaginary part of the solution increases when $\omega$ increases.}

\subsection{Numerical study of stability of boosted standing waves}

In this subsection, we  study numerically the stability of boosted standing wave $\ff=\ff_{c,\omega}$ for various $(c,\omega)$. Similar to subsection \ref{stab-sub}, the stability of such waves can be analyzed based on the convexity of the Lyapunov function.
\[
d(\omega)=d_c(\omega)=E(\ff)+\frac{\omega}{2}F(\ff)+\frac{c}{2}P(\ff).
\]
Observe from \eqref{pohi} that
\begin{equation}
\displaystyle d(\omega)=\frac{\sigma}{2(\sigma+1)}\int_ {\mathbb{R}} |\varphi(x)|^{2\sigma+2} \dd x.
\end{equation}
For a fixed speed $c$, if we assume that the curve $\omega \mapsto \ff_{c,\omega}$ is $C^2$,the convexity of $d$ is connected to the sign of $d''$, where $'=\frac{\dd}{\dd\omega}$.

Note that the boosted standing waves exist whenever $\omega > \frac{c^2}{4\lambda}$. { In order to
study convexity of $d$, we first discretize the interval $(\frac{c^2}{4\lambda}, 3)$. $50$ grid points are located in the interval $\omega \in (\frac{c^2}{4\lambda}, 3)$. For each $\omega $  value and a fixed $c$,  the boosted standing wave $\ff_{c,\omega}$ is generated by using the algorithm (3.6). Then, the approximate value of $d$ in (3.7) is approximated by the trapezoidal rule. Finally,
the second-order derivative $d''$  is calculated by a central difference approximation. }

%Therefore, we consider wave frequency interval as  $\omega \in (\frac{c^2}{4\lambda}, 3)$ for each $c$.  We use the algorithm \eqref{petviashvilischemenonzero}
%to numerically approximate boosted standing wave  for a  fixed $c$ and  $\omega \in (\frac{c^2}{4\lambda}, 3)$. $50$ grid points are located in the interval $\omega \in (\frac{c^2}{4\lambda}, 3)$ for each $c$. Then the integral is approximated by the trapezoidal rule. Finally,
%the second-order derivative is calculated by a central difference approximation.
\begin{figure}[h!]
 \begin{minipage}[t]{0.45\linewidth}
   \includegraphics[width=3in]{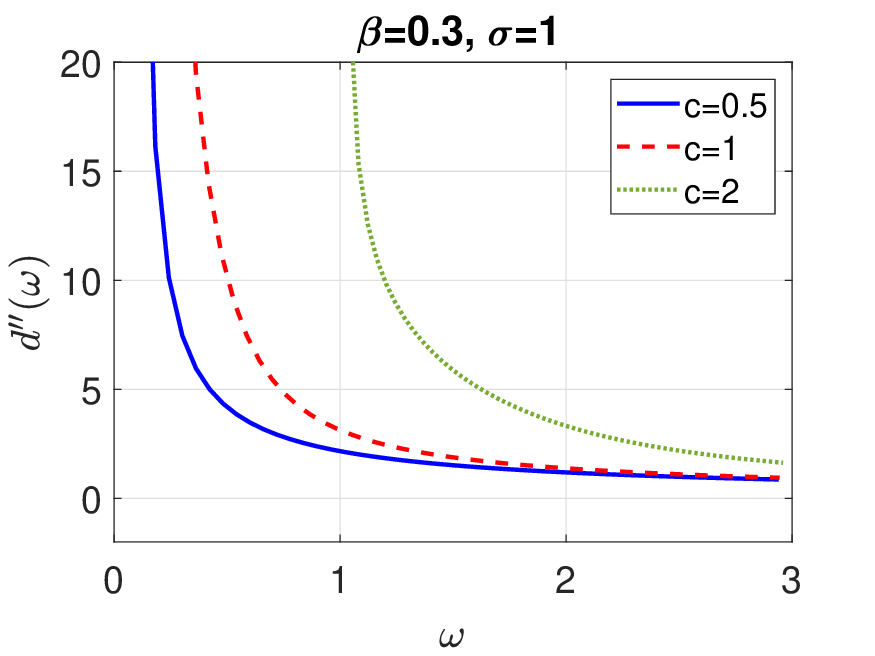}
 \end{minipage}
 \hspace{30pt}
\begin{minipage}[t]{0.45\linewidth}
   \includegraphics[width=3in]{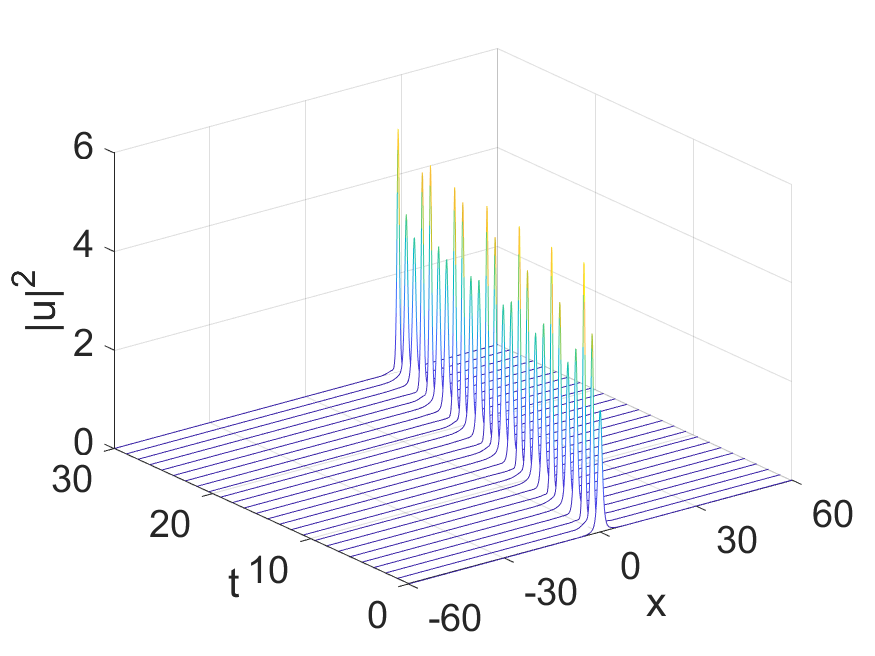}
 \end{minipage}
\caption{Plots of $d^{\prime\prime}(\omega)$ for $\beta=0.3, \sigma=1$ with $c=0.5,1,2$ and time evolution of the modulus squared of perturbed boosted standing wave
solution corresponding to the initial data $u = 1.1\varphi(x)$ for $ c=1, \omega=1$} \label{beta03sigma1}
\end{figure}

\begin{figure}[h!]
 \begin{minipage}[t]{0.45\linewidth}
 \includegraphics[width=3in]{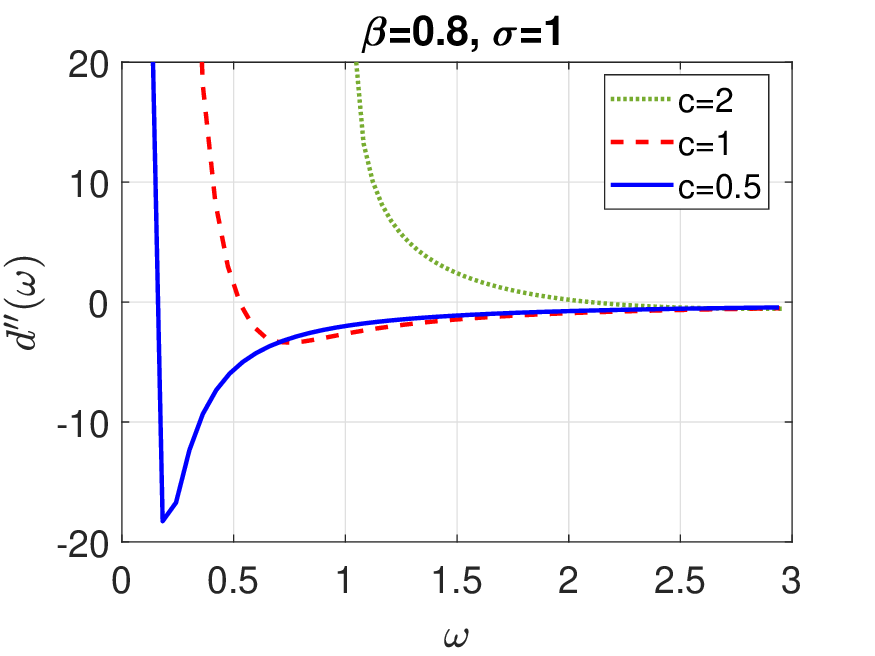}
 \end{minipage}
  \begin{minipage}[t]{0.45\linewidth}
\includegraphics[width=3in]{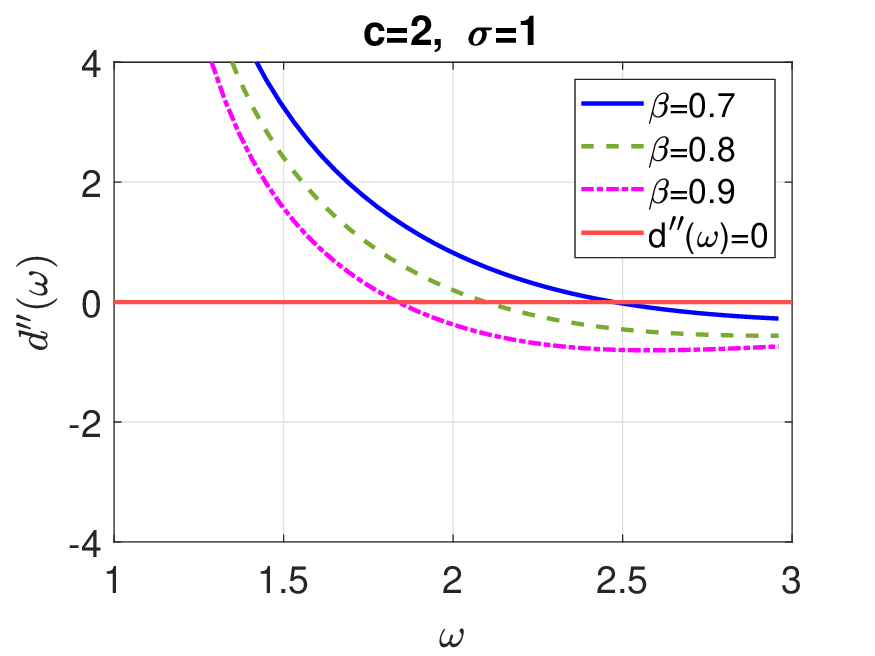}
\end{minipage}
 \caption{ Plots of $d^{\prime\prime}(\omega)$ for $\beta=0.8, \sigma=1$ with different speeds and  plots of $d^{\prime\prime}(\omega)$ for $c=2, \sigma=1$ with different $\beta$ values. } \label{sigma1}

 \end{figure}

 \begin{figure} [h!]
\begin{minipage}[t]{0.45\linewidth}
   \includegraphics[width=3in]{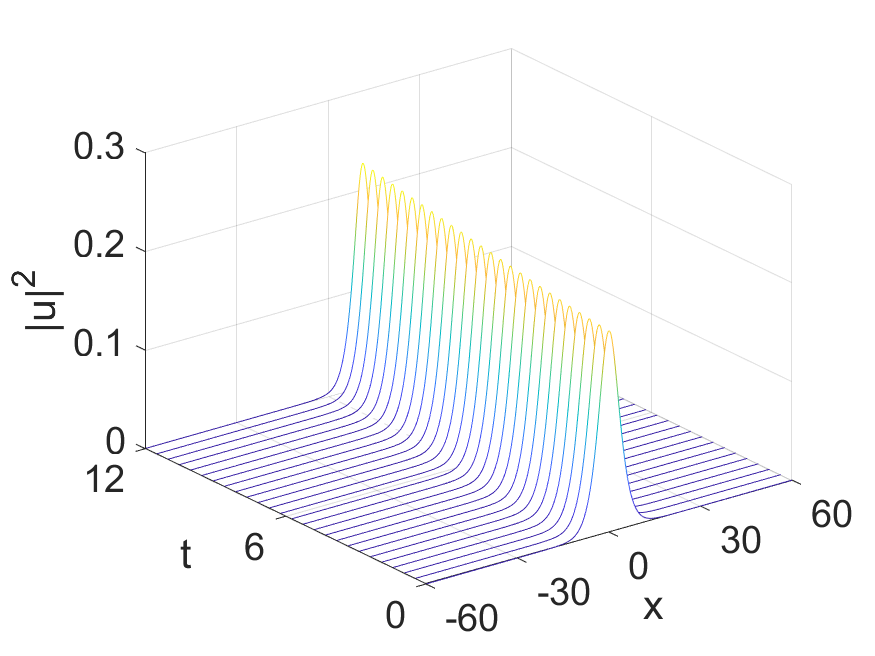}
 \end{minipage}
\hspace{30pt}
 \begin{minipage}[t]{0.45\linewidth}
   \includegraphics[width=3in]{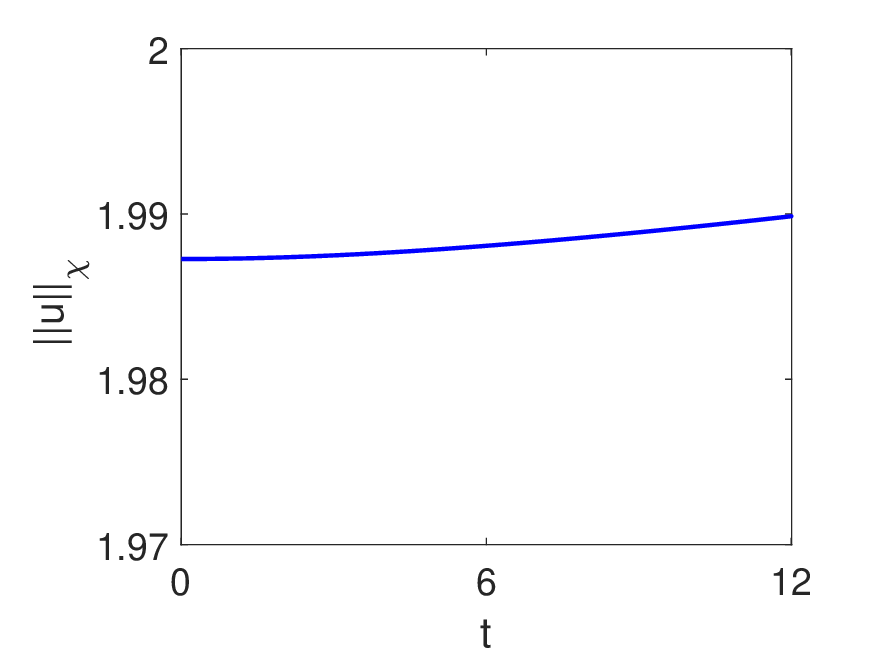}
 \end{minipage}
\caption{ Time evolution of  the modulus squared of perturbed boosted standing wave
solution corresponding to the initial data $u = 1.1\varphi(x)$ for $  c=1, \omega=0.4, \beta=0.8, \sigma=1$ and the variation of  $\| u(t)\|_{\nx}$   with time.  }    \label{beta08sigma1c1w04}
\end{figure}

\begin{figure} [h!]
\begin{minipage}[t]{0.45\linewidth}
   \includegraphics[width=3in]{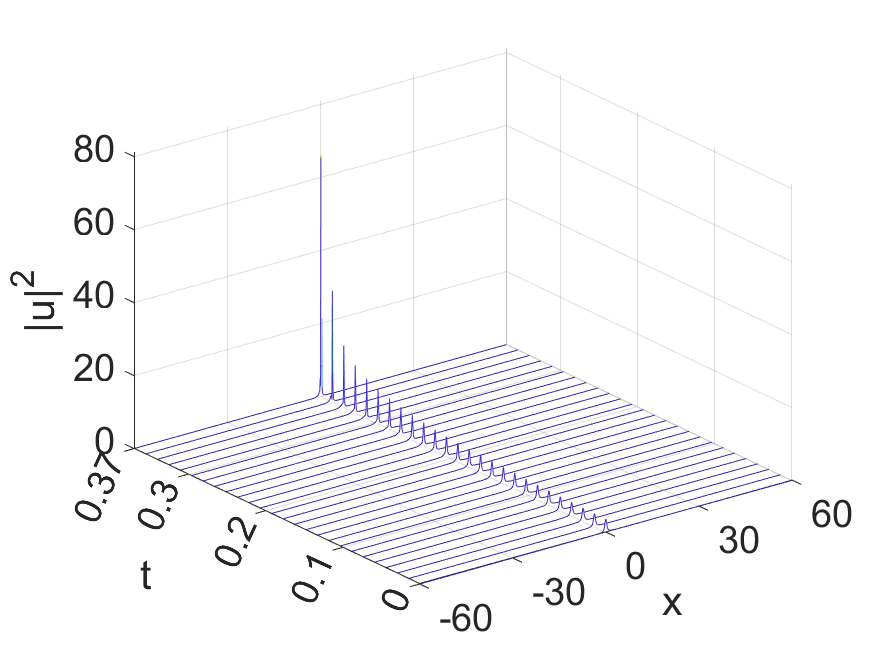}
 \end{minipage}
\hspace{30pt}
 \begin{minipage}[t]{0.45\linewidth}
   \includegraphics[width=3in]{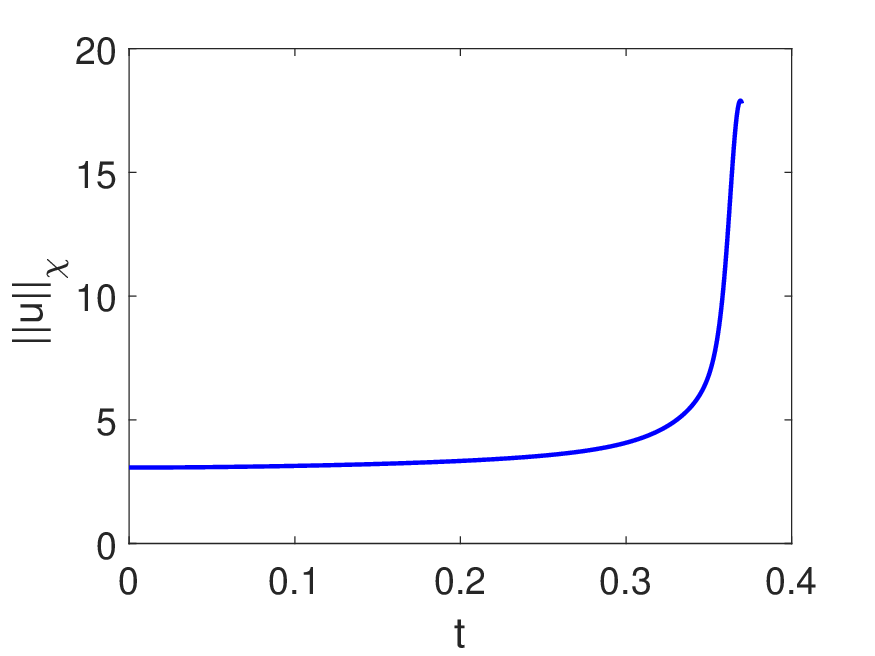}
 \end{minipage}
\caption{ Time evolution of  the modulus squared of perturbed boosted standing wave
solution corresponding to the initial data $u = 1.1\varphi(x)$ for  $ c=1, \omega=1,\beta=0.8, \sigma=1 $ and the variation of  $\| u(t)\|_{\nx}$   with time. } \label{beta08sigma1c1w1}
\end{figure}

\noindent
First, we present the results for the cubic focusing nonlocal NLS equation, setting $\zeta=1, \lambda=1$. The left panel of Figure \ref{beta03sigma1} shows the plots of $d^{\prime\prime}(\omega)$ for $\beta=0.3, \sigma=1$ for various values of $c$.   Figure \ref{beta03sigma1} illustrates that we have stability for all $c$ (within the range of computation performed) {since the sign of $d''(\omega)>0$. }Time evolution of the modulus squared of perturbed boosted standing wave
solution corresponding to the initial data $u = 1.1\varphi(x)$ for $\beta=0.3, c=1, \omega=1$ is depicted in the right panel of Figure  \ref{beta03sigma1}. This figure shows that the boosted standing wave moves oscillatingly to the right with speed $1$ for a long time. Next, we choose $\beta=0.8, \sigma=1$ satisfying
$\sigma>\frac{2-\beta}{1+\beta}$. The left panel of Figure \ref{sigma1} illustrates that {$d''(\omega)$ changes sign. Therefore,
there exists a critical wave frequency $\omega_c$  such that the wave is stable when $\omega < \omega_c$ and unstable $\omega > \omega_c$. }  It can be  seen that the critical $w_c$  increases as the speed increases for a fixed $\sigma$ and $\beta$.  The right panel of Figure \ref{sigma1}
shows the plots of $d^{\prime\prime}(\omega)$ for $c=2, \sigma=1$ for various values of $\beta$. We observe that the critical wave frequency $w_c$ is a decreasing function of $\beta$ for a fixed $c$ and $\sigma$.
Time evolution of  the modulus squared of perturbed boosted standing wave
solution corresponding to the initial data $u = 1.1\varphi(x)$ for $ (i)~ c=1, \omega=0.4$, $ (ii) ~c=1, \omega=1$ and the variations of  $\| u(t)\|_{\nx}$   with time
are presented in Figures \ref{beta08sigma1c1w04} and \ref{beta08sigma1c1w1}, respectively.  Figure \ref{beta08sigma1c1w04} shows that the boosted standing wave
retains its shape and $\nx$ norm of the solution becomes bounded. However,  the amplitude of the boosted solitary wave  and $\nx$ norm of the solution increase very rapidly near $t=0.37$ in Figure \ref{beta08sigma1c1w1}. The numerical result indicates that the boosted solitary wave  is  orbitally unstable.

\noindent
Now, we investigate the focusing nonlocal NLS equation, setting $\zeta=1, \lambda=1$, with $\sigma=1.5$.   We choose $\beta=0.1, \sigma=1.5$ satisfying
$\sigma <\frac{2-\beta}{1+\beta}$. Figure \ref{beta01sigma15} illustrates that we have stability for all $c$ (within the range of computation performed). Time evolution of  the modulus squared of perturbed boosted standing wave
solution corresponding to the initial data $u = 1.1\varphi(x)$ for $  c=1, \omega=1$ and the variation of  $\| u(t)\|_{\nx}$   with time
are presented in Figure \ref{beta1over10sigma1.5}. The solution moves oscillatingly to the right with speed $1$. Since the ${\nx}$ norm of the solution is bounded, the numerical results indicate that the boosted solitary wave is stable in this case.
Next, we choose $\beta=0.5, \sigma=1.5$ satisfying
$\sigma>\frac{2-\beta}{1+\beta}$. The left panel of Figure \ref{beta05sigma1.5} illustrates that there exists a critical $\omega_c$ value such that the wave  is stable  when $\omega < \omega_c$ and unstable $\omega > \omega_c$, except $c=0.5$.
Time evolution of  the modulus squared of perturbed boosted standing wave
solution corresponding to the initial data $u = 1.1\varphi(x)$ for $ c=1, \omega=1$
is presented in the right panel of  Figure \ref{beta05sigma1.5}. The amplitude of the boosted solitary wave  increases very rapidly near $t=0.3$.   The wave is orbitally unstable since $d^{\prime\prime}(1)<0$, as we expected from the left panel of Figure \ref{beta05sigma1.5}.

\begin{figure}[h!]
\begin{center}
   \includegraphics[width=3in]{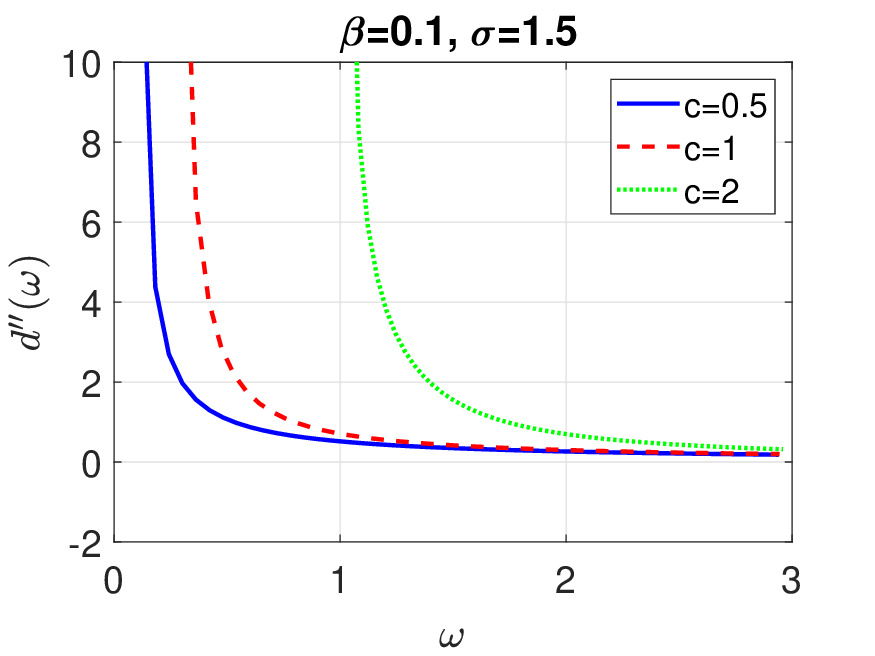}
 \caption{Plots of $d^{\prime\prime}(\omega)$ for $\beta=0.1, \sigma=1.5$ with $c=0.5, 1, 2$. }  \label{beta01sigma15}
\end{center}
\end{figure}

\begin{figure}[h!]
\begin{minipage}[t]{0.45\linewidth}
   \includegraphics[width=3in]{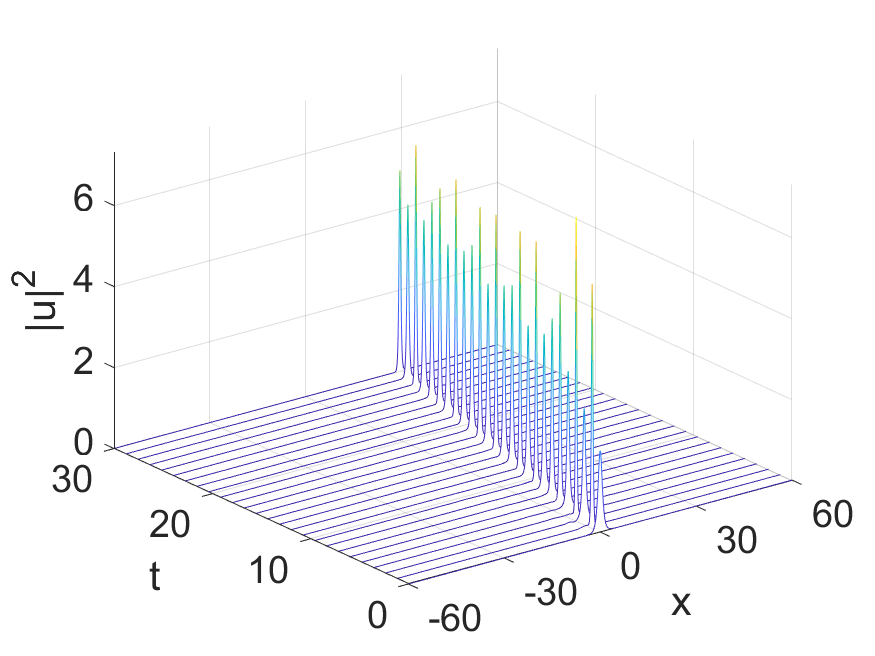}
 \end{minipage}
\begin{minipage}[t]{0.45\linewidth}
   \includegraphics[width=3in]{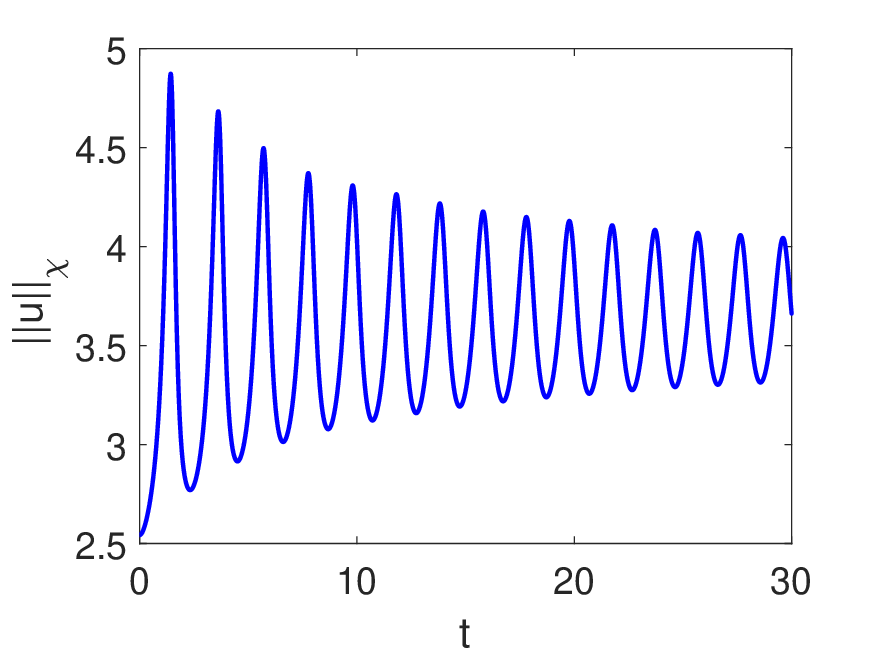}
 \end{minipage}
 \caption{ Time evolution of  the modulus squared of perturbed boosted standing wave
solution corresponding to the initial data $u = 1.1\varphi(x)$ for  $ c=1, \omega=1,\beta=0.1, \sigma=1.5 $ and the variation of  $\| u(t)\|_{\nx}$   with time. } \label{beta1over10sigma1.5}
\end{figure}

\begin{figure}[h!]
 \begin{minipage}[t]{0.45\linewidth}
   \includegraphics[width=3in]{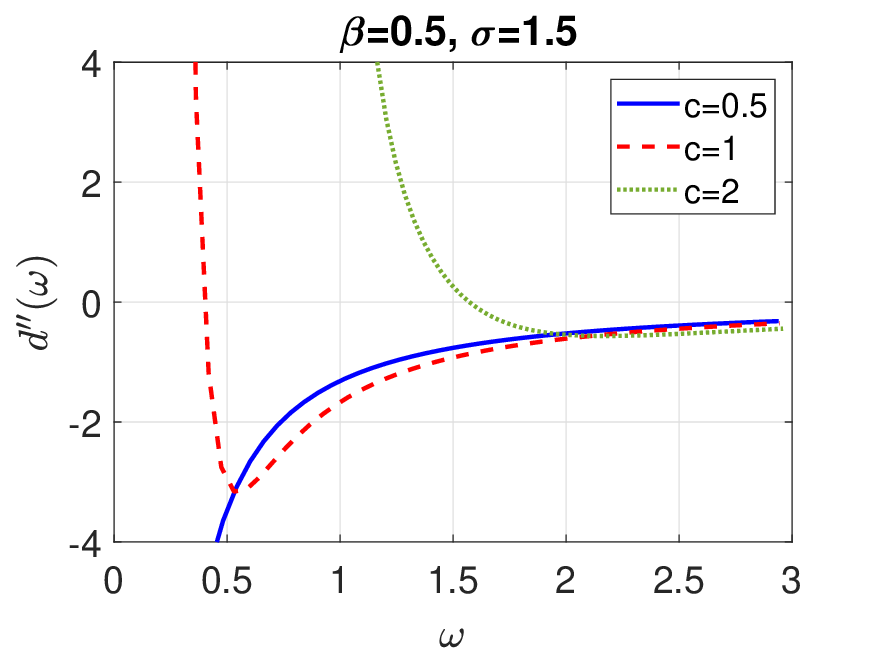}
 \end{minipage}
 \hspace{30pt}
\begin{minipage}[t]{0.45\linewidth}
   \includegraphics[width=3in]{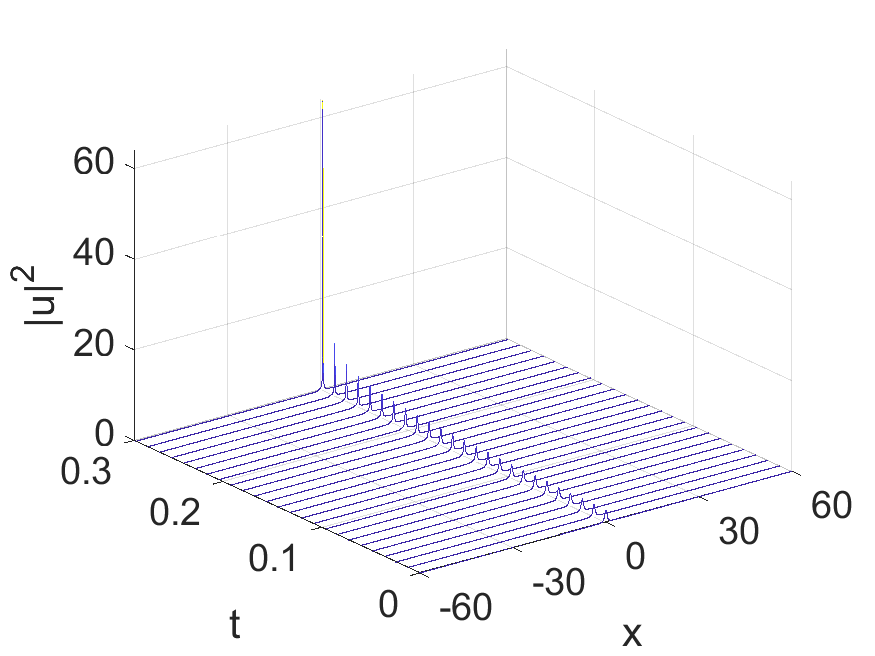}
 \end{minipage}
\caption{Plots of $d^{\prime\prime}(\omega)$ for $\beta=0.5, \sigma=1.5$ with $c=0.5, 1, 2$ and  time evolution of  the modulus squared of perturbed boosted standing wave
solution corresponding to the initial data $u = 1.1\varphi(x)$ for  $ c=1, \omega=1,$ \mbox{$\beta=0.5, \sigma=1.5.$}} \label{beta05sigma1.5}
\end{figure}

\section{The behavior of solutions of the nonlocal NLS equation in the semi-classical limit}\label{semi-classical limit}

In this section, we consider the semi-classical limit of the nonlocal NLS equation  in both
the focusing and the defocusing cases. Let $\epsilon$ be a small parameter $(0 < \epsilon \ll 1)$.
We introduce the slowly varying variables  \mbox{$({x},{t})= (\epsilon \tilde{x}, \epsilon \tilde{t}\,)$} and define
\begin{equation}
     u(\epsilon {x}, \epsilon{t}\,)= u^{\epsilon}(\tilde{x},\tilde{t}).
\end{equation}
We drop the tildes for the sake of convenience so that the scaled nonlocal NLS equation
\begin{equation}
     \ii  u^{\epsilon}_{{t}}-\lambda \epsilon u^{\epsilon}_{{x}{x}}=\zeta {\epsilon}^{\beta-1} D^{\beta} (| u^{\epsilon} |^{2\sigma} u^{\epsilon}). \label{nonlocalNLSepsilon}
\end{equation}

\begin{figure}[h!]
 \begin{minipage}[t]{0.45\linewidth}
   \includegraphics[width=3in]{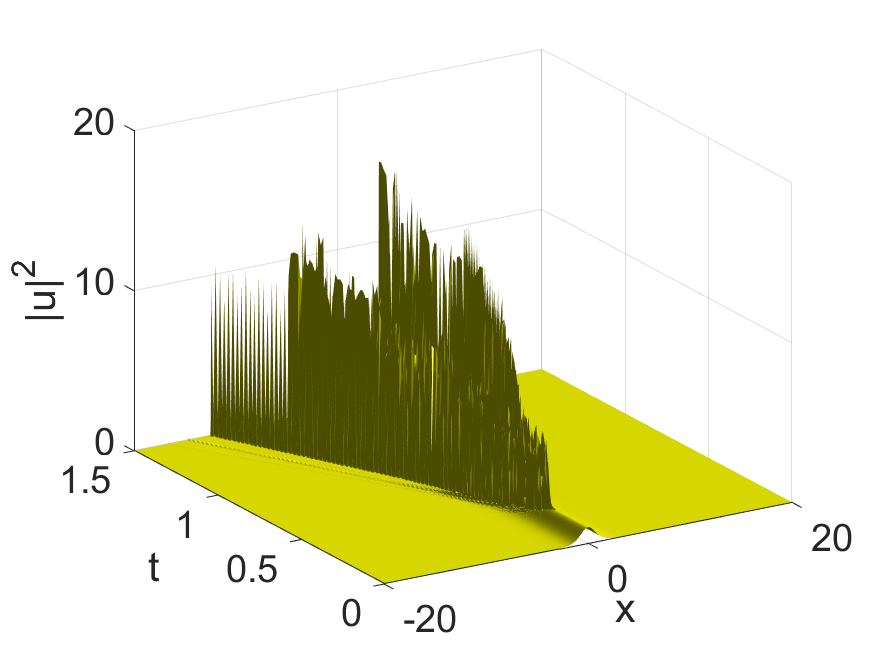}
 \end{minipage}
\hspace{30pt}
\begin{minipage}[t]{0.45\linewidth}
   \includegraphics[width=3in]{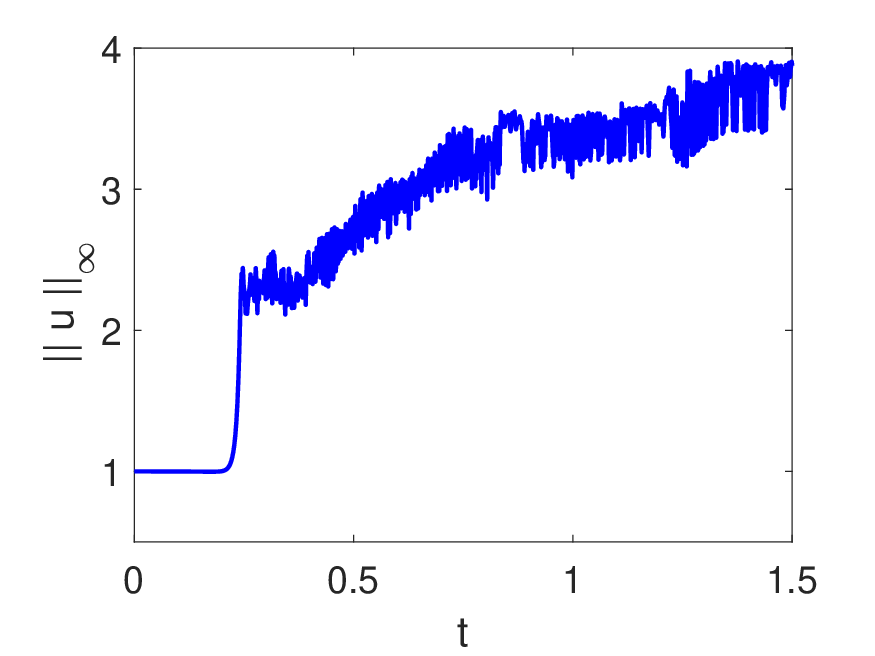}
 \end{minipage}
 %\hspace{60pt}
 \centering
 \begin{minipage}[t]{0.45\linewidth}
   \includegraphics[width=3in]{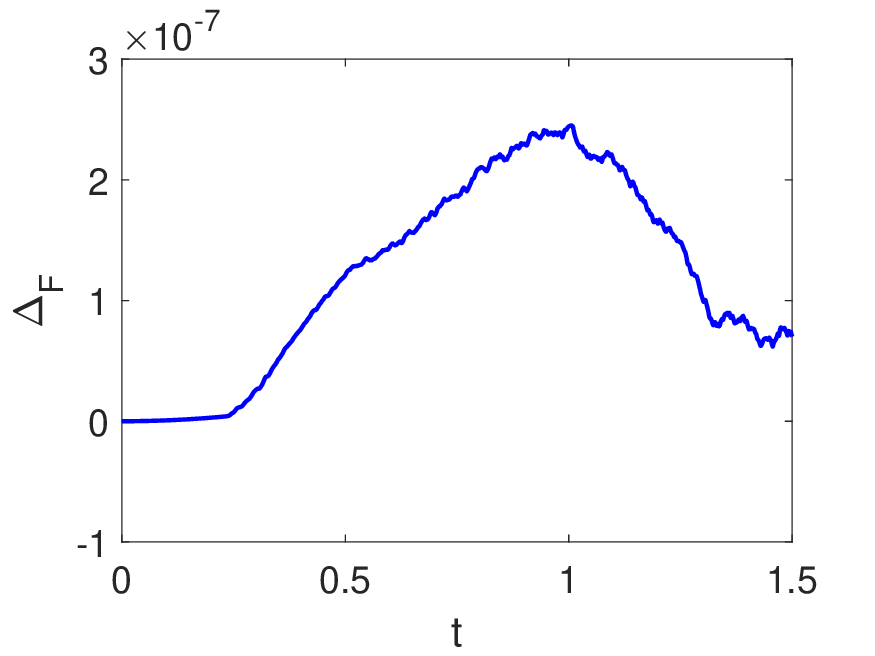}
 \end{minipage}
\caption{ The onset of breaking for the semi-classical nonlocal focusing  NLS equation  with real initial data for $\epsilon=0.1$ and $\beta=1.5$, the variation of $L^{\infty}$-norm of solution with time and the  variation of change in the
conserved quantity {${F}$} with time. } \label{shock1.5}
\end{figure}

\begin{figure}[h!]
 \begin{minipage}[t]{0.45\linewidth}
   \includegraphics[width=3in]{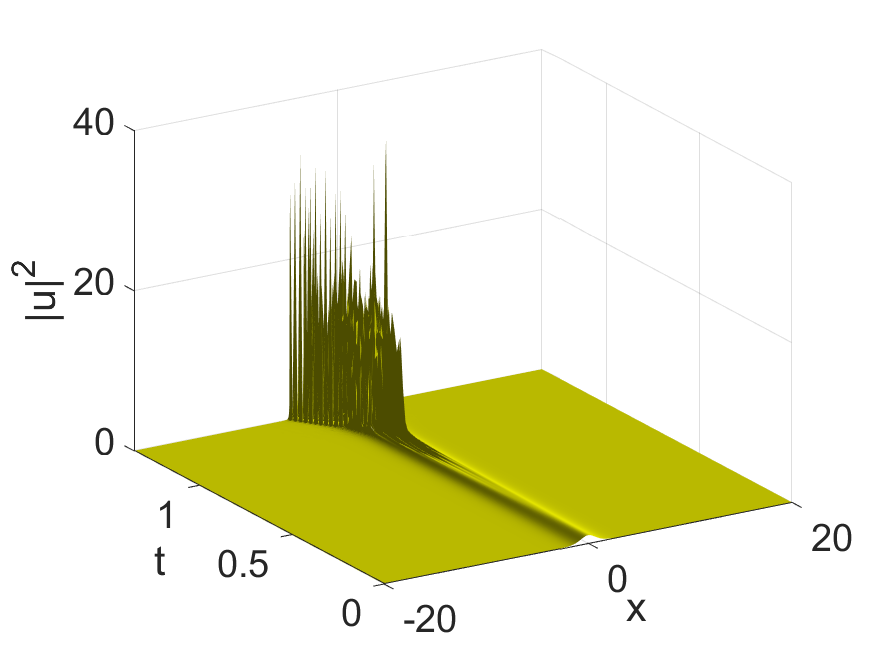}
 \end{minipage}
\hspace{30pt}
\begin{minipage}[t]{0.45\linewidth}
   \includegraphics[width=3in]{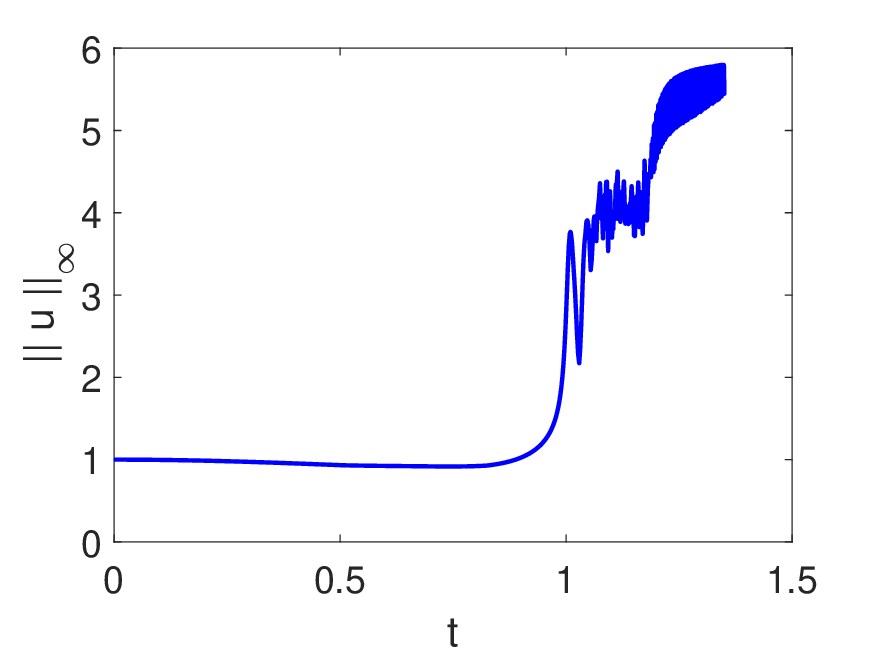}
 \end{minipage}
\caption{ The onset of breaking for the semi-classical nonlocal focusing  NLS equation  with real initial data for $\epsilon=0.1$ and $\beta=1$ and the variation of $L^{\infty}$-norm of solution with time.  } \label{shock1}
\end{figure}
\begin{figure}[h!]
 \begin{minipage}[t]{0.45\linewidth}
   \includegraphics[width=3in]{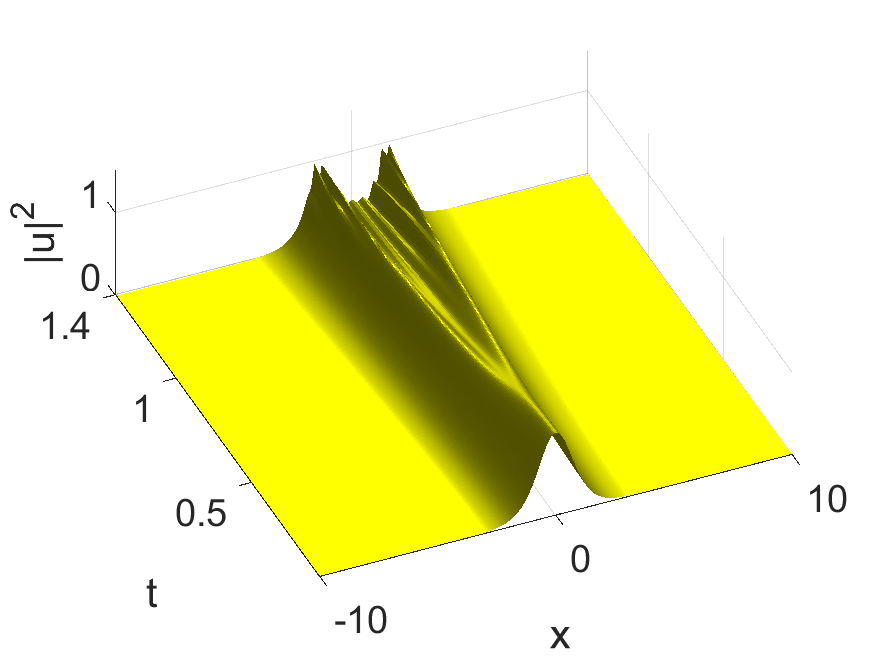}
 \end{minipage}
\hspace{30pt}
\begin{minipage}[t]{0.45\linewidth}
   \includegraphics[width=3in]{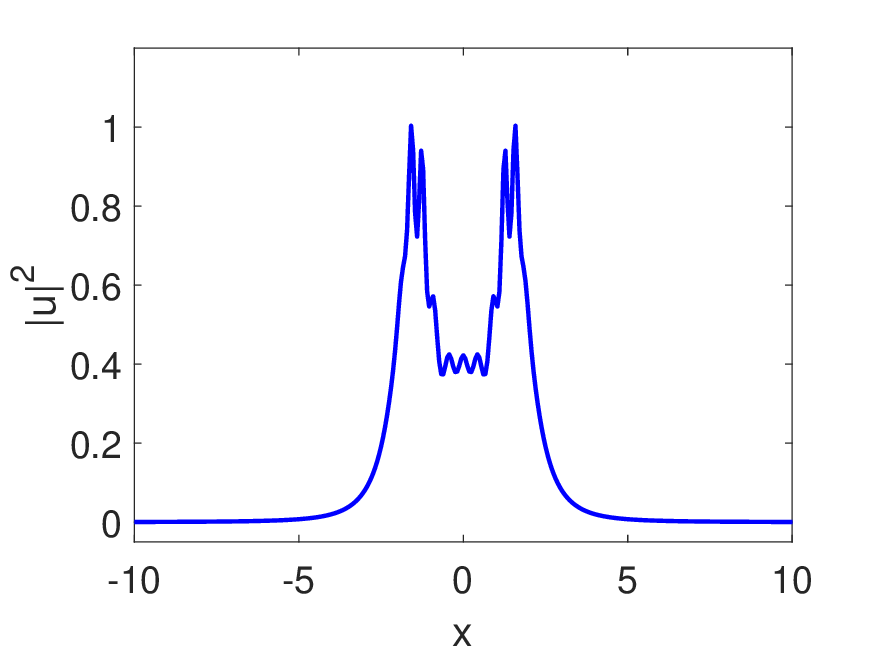}
 \end{minipage}
\caption{ The onset of breaking for the semi-classical nonlocal focusing NLS equation  with  the real initial condition for $\epsilon=0.1$, $\beta=0.7$ and the modulus squared of the solution at $t=1.4$. } \label{shock0.7focusing}
\end{figure}

\begin{figure}[h!]
 \begin{minipage}[t]{0.45\linewidth}
   \includegraphics[width=3in]{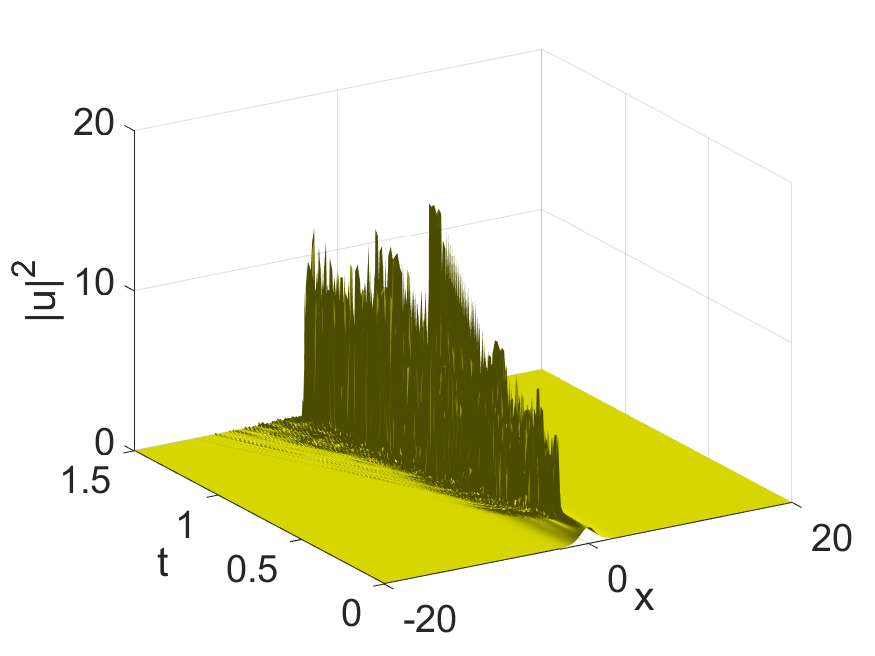}
 \end{minipage}
\hspace{30pt}
\begin{minipage}[t]{0.45\linewidth}
   \includegraphics[width=3in]{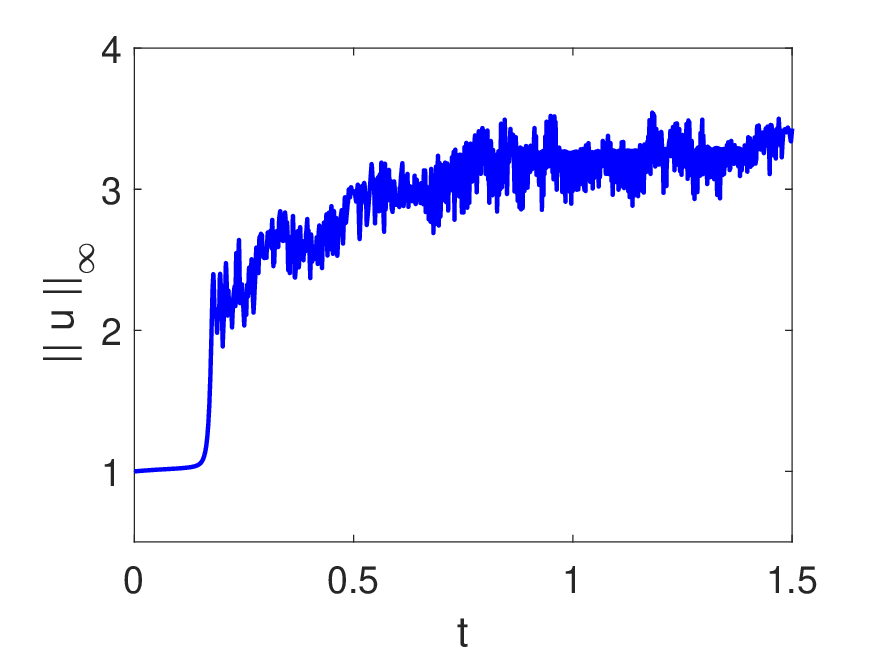}
 \end{minipage}
\caption{ The onset of breaking for the semi-classical nonlocal focusing NLS equation  with  the initial condition \eqref{phase} for $\epsilon=0.1$, $\beta=1.5$ and the variation of $L^{\infty}$-norm of solution with time.  } \label{shock1.5phase}
\end{figure}

\begin{figure}[h!]
 \begin{minipage}[t]{0.45\linewidth}
   \includegraphics[width=3in]{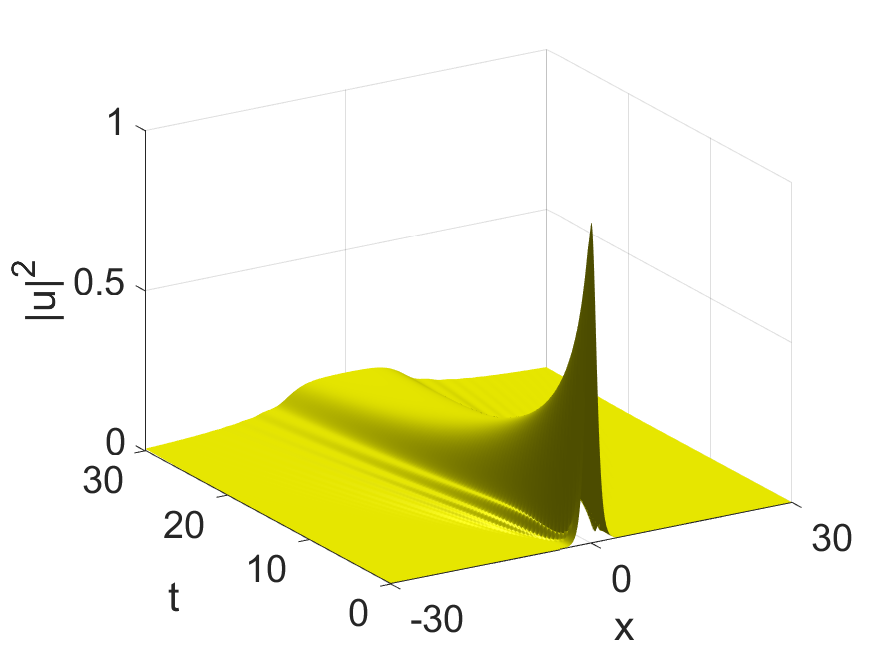}
 \end{minipage}
\hspace{30pt}
\begin{minipage}[t]{0.45\linewidth}
   \includegraphics[width=3in]{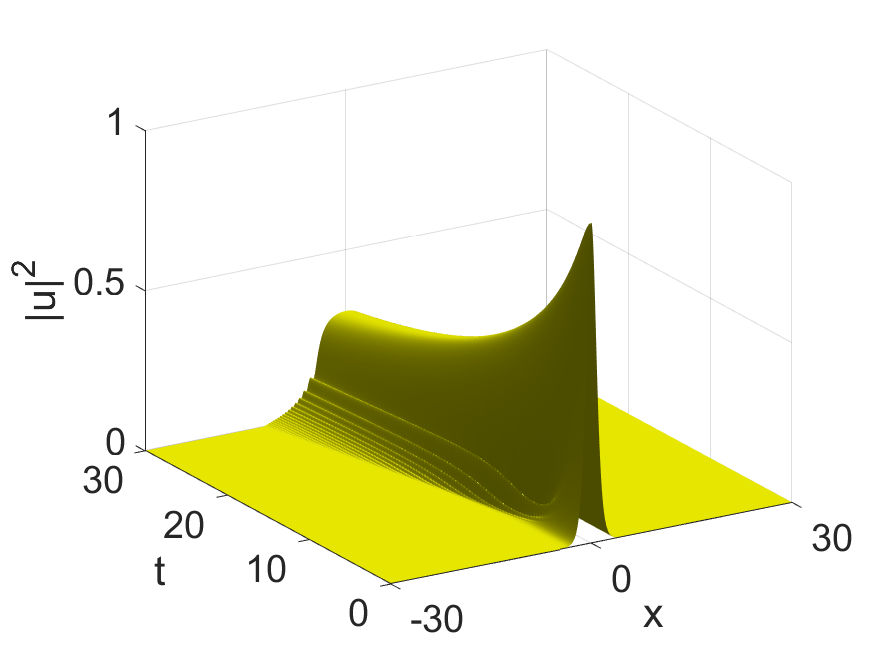}
 \end{minipage}
\caption{ Solution of the  semi-classical nonlocal defocusing NLS equation with  real initial condition  for  $\epsilon=0.1$ (left panel)  and $\epsilon=0.01$ (right panel).  } \label{shock1.5defocusing}
\end{figure}

\begin{figure}[h!]
 \begin{minipage}[t]{0.45\linewidth}
   \includegraphics[width=3in]{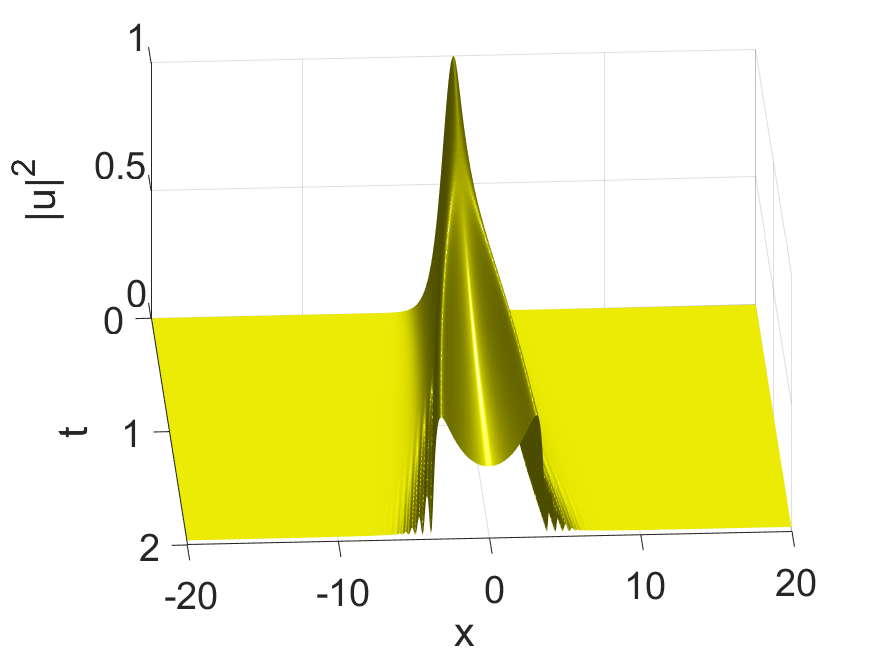}
 \end{minipage}
\hspace{30pt}
\begin{minipage}[t]{0.45\linewidth}
   \includegraphics[width=3in]{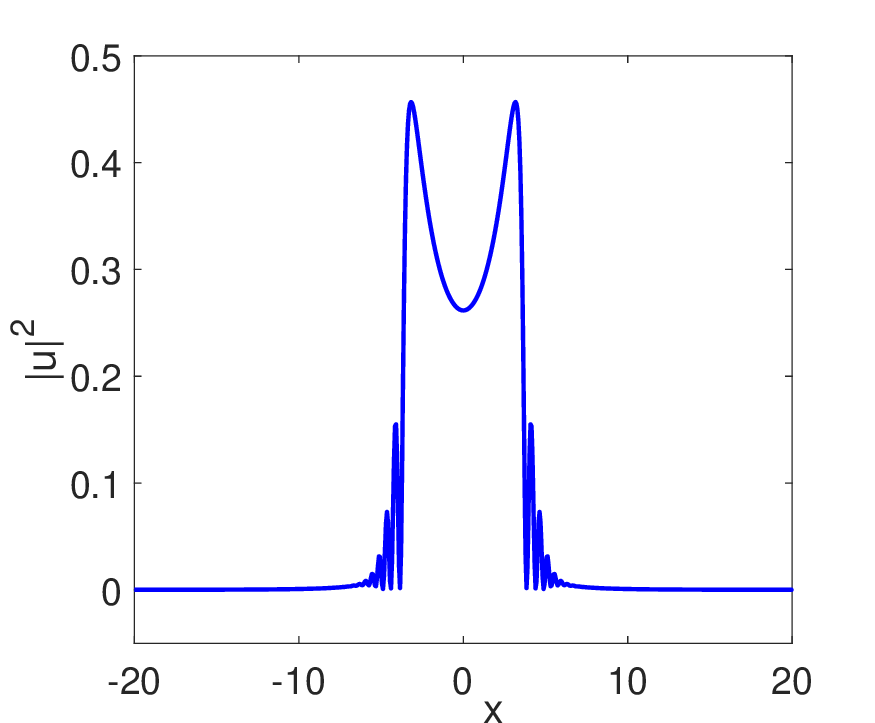}
 \end{minipage}
\caption{ Solution of the  semi-classical nonlocal defocusing NLS equation with  real initial condition  for  $\epsilon=0.1$, $\beta=0.7$  and the modulus squared of  the solution at $t=2$. } \label{shock0.7defocusing}
\end{figure}

This computational problem is notoriously
difficult. Numerical studies of the usual zero dispersion limits of nonlinear dispersive equations are hampered by the fact that in order to resolve the equation for a small parameter $\epsilon$, it requires a much tinier discretization.  The semi-classical initial condition is given by
\begin{equation}
u^{\epsilon}(x,0)=A(x) \exp \left(\ii \frac{S(x)}{\epsilon}\right),
\end{equation}
where $A(x)$ is the initial amplitude and
$S(x)$ is the real initial phase. We begin with the  nonlocal focusing NLS equation taking $\zeta=1$. Figure \ref{shock1.5} illustrates the self-focusing of a real initial condition
\begin{equation}
    u^{\epsilon}(x,0)=\mbox{sech}(x) \label{real}
\end{equation}
for $\epsilon=0.1$ and $\beta=1.5$.  The problem is solved in the space interval $-5000 \leq x \leq 5000$ up to $T=1.5$. We take the number of grid points as $N=2^{18}, M=30000$. For $\epsilon=0.1$ and $\beta=1.5$, the equation has weak nonlinearity.  As it is seen from the figure, self-focusing is followed by the onset of wave breaking and caustic formation similar to the semi-classical NLS equation in \cite{bronski}. The variation of $L^{\infty}$-norm of solution with time is also presented in Figure \ref{shock1.5}. We observe that the self-focusing occurs at the time of first break, around $t=0.2$. {Accuracy of the numerical scheme is checked by  the variation of change in the
conserved quantity ${ F}$ with time. Relative mass error stays around $10^{-7}$ during the computation.}
Figure \ref{shock1} depicts the  onset of breaking for the semi-classical nonlocal focusing NLS equation  with  \eqref{real} for $\epsilon=0.1$, $\beta=1$ and the variation of $L^{\infty}$-norm of solution with time.
The self-focusing occurs at the time of first break, around $t=0.98$. Figure \ref{shock0.7focusing} pictures the
evolution of real initial data for $\beta=0.7$ and the solution at $t=1.4$. Since the equation has strong nonlinearity, initial hump splits into two humps in a short time. Oscillatory region is observed.

Now we investigate  the effect of a non-trivial phase on a semiclassical evolution. The initial condition is chosen as
\begin{equation}
u^{\epsilon}(x,0)=\mbox{sech}(x) \exp \left(2\ii \frac{\mbox{sech}(x)}{\epsilon}\right) \label{phase}
\end{equation}
for $\epsilon=0.1$ and $\beta=1.5$. Figure \ref{shock1.5phase} shows the  onset of breaking for the semi-classical nonlocal NLS equation  with  \eqref{phase} for $\epsilon=0.1$, $\beta=1.5$ and the variation of $L^{\infty}$-norm of solution with time.
This figure is very similar to  the case of real initial data given in Figure \ref{shock1.5}. We observe that the self-focusing occurs at the time of first break, around $t=0.16$. The first break in Figure \ref{shock1.5phase}
appears a bit earlier  than  the first break in  Figure \ref{shock1.5}.

Next, we study the nonlocal defocusing NLS equation taking $\zeta=-1$.
Figure \ref{shock1.5defocusing} depicts the solution of the  semi-classical nonlocal defocusing NLS equation  corresponding to  real initial condition    for  $\beta=1.5, \epsilon=0.1$  and $\beta=1.5, \epsilon=0.01$.  Similar to observations for  defocusing fractional NLS \cite{klein} and  defocusing NLS  \cite{jin,bronski2}, the smooth initial pulse tends to `square up'. The initial hump  flattens while the sides of the hump steepen.
A train of rapid oscillations manifests  before and after the pulse. These oscillations become more visible
when $\epsilon$ decreases.  Figure \ref{shock0.7defocusing} illustrates the evolution of the  real initial condition for the   semi-classical nonlocal defocusing NLS equation  with    $\epsilon=0.1$  and $\beta=0.7$. In this case, the equation has strong nonlinearity. As it is seen from the figure, the initial hump splits into two humps. We observe some oscillations after the edge of the solution.

%%%%%%%%%%%%%%%%%%%%%%%%%%%%%%%%%%%%%%%%%%%%%%%%%

\subsection*{Conflict of interest} The authors declare that they have no conflict of interest.
	
	\subsection*{Data Availability}
	There is no data in this paper.

	\section*{Acknowledgment}
	 The authors   wish  to thank  four unknown referees and the editor for their
	 valuable suggestions which helped to improve the paper.
	
	A. E. is supported by Nazarbayev University under the Faculty Development Competitive Research Grants Program for 2023-2025 (grant number 20122022FD4121).
	
	%%%%%%%%%%%%%%%%%%%%%%%%%%%%%%%%%%%%%%%%%%%%%


\begin{thebibliography}{9}
\bibitem{bejenaru1} I. Bejenaru,
Quadratic nonlinear derivative Schrödinger equations. I. Int. Math. Res. Pap.  (2006), Art.
ID 70630, 84 pp.

\bibitem{bejenaru2}  I. Bejenaru,
Quadratic nonlinear derivative Schrödinger equations. II., Trans. Amer. Math. Soc. 360
(2008)  5925--5957.

\bibitem{bejenaru-tataru} I. Bejenaru, D. Tataru,
Large data local solutions for the derivative NLS equation, J. Eur. Math. Soc. 10
(2008)  957--985.


\bibitem{bfv}
J. Bellazzini, R.L.  Frank, N. Visciglia,
Maximizers for Gagliardo-Nirenberg inequalities
and related non-local problems,
Math. Ann. 360 (2014) 653--673.


\bibitem{berecase}
H. Berestycki, T. Cazenave,
Instabilité des états stationnaires dans les équations de Schrödinger et de Klein-Gordon non linéaires
C. R. Acad. Sci. Paris 293  (1981)  489--492.



\bibitem{besse}
C. Besse, B. Bidégaray, S. Descombes,
Order estimates in time of splitting methods for the nonlinear Schrödinger equation, SIAM J. Numer. Anal. 40.1 (2002)  26--40.


\bibitem{bronski}
J. C. Bronski, J. N. Kutz,  Numerical simulation of the semi-classical limit of the focusing nonlinear Schrödinger equation, Phys. Lett. A, 254(6) (1999) 325--336.


\bibitem{bronski2}
J.C. Bronski,  D.W. McLaughlin, Semiclassical behavior in the NLS equation: Optical shocks-focusing instabilities,  Singular Limits of Dispersive Waves. Boston, MA: Springer US, (1994) 21--38.



\bibitem{cmmt}
D. Cai, A. Majda, D. McLaughlin, E. Tabak,
Dispersive wave turbulence in one dimension,
Phys. D.  152-153 (2001)  551--572.

\bibitem{campos}
R.  G. Campos, J. Rico-Melgoza, E. Ch{á}vez,  A new formulation of the fast fractional Fourier transform, SIAM  J. Sci. Comput. 34(2),  (2012) A1110-A1125.


  \bibitem{cazenave} T. Cazenave, Semilinear Schrödinger equations, Courant Lecture Notes 10, AMS, 2003.



\bibitem{caselions}
T. Cazenave,   P.L. Lions,
Orbital stability of standing waves for some nonlinear Schrödinger equations,
Comm.   Math. Phys. 85 (1982) 549--561.

\bibitem{christ}
M. Christ,   Illposedness of a Schrödinger equation with derivative nonlinearity. preprint (2003).

\bibitem{related}
M. Christ, J. Colliander, T. Tao,
Asymptotics, frequency modulation, and low regularity ill-posedness
for canonical defocusing equations,
Amer. J. Math. 125 (2003)  1235--1293.


\bibitem{corma}
A. Córdoba, Á.D. Martínez,
A pointwise inequality for fractional Laplacians,
Adv. Math. 280  (2015) 79--85.

\bibitem{dwz}
D. Du, Y. Wu, K. Zhang,
On blow-up criterion for the nonlinear Schrödinger equation,
Discrete Contin. Dyn. Syst. 36 (2016)  3639--3650.

\bibitem{ref1}
 R.S. Dù, O. Bühler,
 The impact of frequency bandwidth on a one-dimensional model for dispersive wave turbulence, J. Nonlinear Sci. 33 (2023) p.81.


\bibitem{zamp}
A. Esfahani,
Anisotropic Gagliardo-Nirenberg inequality with fractional derivatives, Z. Angew. Math. Phys. 66  (2015) 3345--3356.



\bibitem{physicad2021}
A. Esfahani,  S. Levandosky,
Existence and stability of traveling waves of the fifth-order KdV equation,
Physica D  421 (2021)  132872.


\bibitem{narwa2022}
A. Esfahani,  S. Levandosky,
Solitary waves of a generalized Ostrovsky equation
Nonlinear Anal. RWA  63 (2022)  103395.


\bibitem{elm}
A. Esfahani, S. Levandosky, G.M. Muslu,
On the Kadomtsev–Petviashvili equation with double-power nonlinearities,
Physica D 460 (2024)  134057.



\bibitem{cvpde2018}
 A. Esfahani, A. Pastor,
 Two dimensional solitary waves in shear flows, Calculus
Var. Partial Differential Equations 57 (2018) 102--134.





\bibitem{fornberg1}
B. Fornberg,  T.A. Driscoll,
A fast spectral algorithm for nonlinear wave equations with linear    dispersion,
J. Comput. Phys. 155 (1999) 456.


\bibitem{germain}
P. Germain, J. La,  K. Z. Zhang,
Local well-posedness for the kinetic MMT model.
Commun.  Math. Phys.  406  (2025) 1-38.


\bibitem{glass}
R.T. Glassey,
On the blowing up of solutions to the Cauchy problem for nonlinear Schrödinger equations,
J. Math. Phys. 18 (1977)  1794--1797.


\bibitem{gss}
 M. Grillakis, J. Shatah,  W. Strauss,
 Stability theory of solitary waves in the presence of symmetry, I. Journal   Func. Anal.  74(1)  (1987) 160--197.

\bibitem{Hol-R}
J. Holmer, S. Roudenko,
A sharp condition for scattering of the radial $3D$ cubic nonlinear Schrödinger equation,
Comm. Math. Phys. 282 (2008)  435--467.

\bibitem{Hol-R-1}
J. Holmer, S. Roudenko, Divergence of infinite-variance nonradial solutions to the $3D$
NLS equation,
Comm. Partial Differential Equations 35 (2010)  878--905.



\bibitem{jin}
S. Jin, C.D. Levermore, D.W. McLaughlin,
The semiclassical limit of the defocusing NLS hierarchy,
Commun. Pure Appl. Math.  52(5) (1999) 613--654.


% \bibitem{oruc}
% C. Klein, G. Oruc,
% Numerical study of fractional Camassa–Holm equations,
% Physica D   457 (2024) 133979.

\bibitem{klein}
C. Klein,  C. Sparber,   P. Markowich, Numerical study of fractional nonlinear Schrödinger equations,
Proc. R. Soc. A: Math. Phys. Eng. Sci.   (2014)  20140364.

\bibitem{laskin}
N. Laskin, Fractional quantum mechanics and L\'evy path integrals, Phys. Lett. A 268   (2000)  298--305.


\bibitem{laskin1}
N. Laskin, Fractional Schr\"odinger equation, Phys. Rev. E  66 (2002)  056108, 7pp.

\bibitem{linaresponce}
F. Linares, G. Ponce,
Introduction to nonlinear dispersive equations,
Springer, New York (2015).


\bibitem{mmt}
A. Majda, D. McLaughlin and E. Tabak,
A one-dimensional model for dispersive wave turbulence,
J. Nonlinear Sci.  6 (1997)  9--44.

\bibitem{mclachlan1} R.I. McLachlan,  G.R.W. Quispel,
Splitting methods,
{Acta Numerica} {11} (2002)  341--434.


\bibitem{muslu}
G.M. Muslu,  H.A. Erbay,
Higher-order split-step Fourier schemes for the generalized nonlinear Schrödinger equation,
Math. Comput.  Simulation 67.6 (2005) 581--595.

\bibitem{ohst}
S. Oh, A. Stefanov,
On quadratic Schrödinger equations on $R^{1+1}$: A normal form approach,
J. London Math. Soc.  86 (2012)  499--519.


\bibitem{ot-1}
T. Ogawa, Y. Tsutsumi,
Blow-up of $H^1$ solutions for the nonlinear Schrödinger equation, J. Differential Equations 92 (1991)  317--330.

\bibitem{ot-2}
T. Ogawa, Y. Tsutsumi, Blow-up of $H^1$
solutions for the one dimensional nonlinear
Schrödinger equation with critical power nonlinearity, Proc. Amer. Math. Soc. 111 (1991) 487--496.

\bibitem{pns}
Y. Pan,   S. Nazarenko,  J. Shatah,
Spectral diffusion of MMT model with condensate in two or higher dimensions,
SIAM J. Appl. Math. 84  (2024)  2283--2299.


\bibitem{pathria2}
D. Pathria, J. L. Morris,
Pseudo-spectral solution of nonlinear Schr\"{o}dinger equations,
  J. Comput. Phys. 87 (1990) 108.


\bibitem{pelinovsky} D. Pelinovsky,     Y. Stepanyants,
Convergence of Petviashvili's iteration method for numerical approximation of stationary solution  of nonlinear wave equations,
 SIAM J. Numer. Anal.  42 (2004)  1110--1127.

\bibitem{petviashvili}  V.I. Petviahvili,
  Equation of an extraordinary soliton, {Plasma Physics}  2  (1976) 469--472.


\bibitem{ref2}
B. Rumpf,  A.C. Newell, V.E. Zakharov,
Turbulent transfer of energy by radiating pulses, Physical Review Letters  103(7)   (2009) 074502.

\bibitem{ref5}
N.R. Salvatore,  G. Dematteis,  Y. Lvov,
Spectral transfers of sign-definite invariants in wave turbulence,    (2025) arXiv preprint arXiv:2503.15533.

\bibitem{saut-wang}
J. C. Saut, Y. Wang,
Global dynamics of small solutions to the modified fractional Korteweg-de Vries and fractional cubic nonlinear Schrödinger equations, Comm.   Partial Diff. Equations, 46   (2021) 1851--1891.

\bibitem{ref3}
 A. Simonis, A. Hrabski,   Y. Pan,
 On the time scales of spectral evolution of nonlinear waves, J. Fluid Mechanics 979 (2024) p.A33.

\bibitem{ref4}
A. Simonis,  Y. Pan,
Transition from weak turbulence to collapse turbulence regimes in the Majda-McLaughlin-Tabak model, Physical Review E 110 (2024)    024202.


\bibitem{stefanov}
A. Stefanov, On quadratic derivative Schrödinger equations in one space dimension, Trans. Amer. Math.
Soc.  359  (2007) 3589–3607.

\bibitem{sulem}
C. Sulem,   P.-L. Sulem. The nonlinear Schrödinger equation: Self-focusing and wave collapse. Vol. 139. Springer Science \& Business Media, 2007.


% \bibitem{tkdv}
%  K. Trulsen, I. Kliakhandler, K. Dysthe, M. Velarde,
%  On weakly nonlinear modulation of waves on deep water,
%  Phys. Fluids  12 (2000)  2432--2437.


\bibitem{amin} H. Wang,  A. Esfahani, On the Cauchy problem for a nonlocal nonlinear Schrödinger equation,  Discrete Contin. Dyn. Syst. Ser. B  27 (2022)  7185--7206.


\bibitem{weideman}
J.A.C. Weideman,  B.M. Herbst, Split-step methods for the solution of the nonlinear Schrödinger equation, SIAM J.   Numer. Anal. 23.3 (1986) 485--507.

\bibitem{yang} J. Yang,
 Nonlinear waves in integrable and nonintegrable systems, SIAM, 2010.

 \bibitem{Zakharov}
V. Zakharov, F. Dias, A. Pushkarev,
One-dimensional wave turbulence,
Physics Reports 398 (2004)  1-65.

 \bibitem{zgpd}
V. Zakharov,   P. Guyenne,  A. Pushkarev,   F. Dias,    Wave turbulence in one-dimensional models,
Phys. D. 152-153 (2001) 573--619.

\end{thebibliography}
  \end{document}